\title{On the well-posedness of two boundary-domain integral equation systems equivalent to the Dirichlet problem for the Stokes system with variable viscosity}
\author{ }
\newcommand{\bs}[1]{\boldsymbol{#1}}
\newcommand{\supp}{\textnormal{supp}}
\renewcommand{\div}{\textnormal{div}}
\newenvironment{proof}{\paragraph{Proof:}}{\hfill$\square$}
\newtheorem{theorem}{Theorem}[section]
\newtheorem{rem}[theorem]{Remark}
\newtheorem{lemma}[theorem]{Lemma}
\newtheorem{corollary}[theorem]{Corollary}
\numberwithin{equation}{section}
\begin{document}
\maketitle

\begin{center}
C. Fresneda-Portillo \footnote{Corresponding author} (cfresneda@uloyola.es),  M.A. Dagnaw (malemayehu3@gmail.com)
\end{center}
\begin{abstract}
We derive two systems of boundary-domain integral equations (BDIEs) equivalent to the Dirichlet problem for the compressible Stokes system using the potential method with an explicit parametrix (Levi function). The BDIEs are given in terms of the surface and volume hydrodynamic potentials. The mapping properties of these integral potential operators are analysed and applied to prove existence and uniqueness of solution of the two systems of BDIEs obtained taking into account the non-trivial kernels of the single layer and hypersingular hydrodynamic surface potentials. 
\end{abstract}

\section{Introduction} 
The theory of hydrodynamic potentials for the Stokes system has been widely studied, mainly for the constant coefficient case by numerous authors  \cite{ladynes,hsiao,steinbach,kohr1, kohrcomp}. In particular, the Dirichlet problem has received special attention due to its applications in the modelling of laminar viscous fluids - see, for example \cite{walter} and more references therein. Still today, the study of the potential properties of the Stokes potentials remains popular \cite{fikl, KMWnavier, choi}.

The fact of having an explicit fundamental solution for the Stokes system  available has enabled the development of numerical solution schemes based on the boundary integral equation method (BIE) as shown in \cite{hsiao, reidinger, steinbach, kohr1,wenlandzhu} for various boundary value problems (BVPs) with constant coefficients. 

Boundary integral equation methods represent a powerful and universal alternative to the finite element method (FEM) when a fundamental solution is available explicitly. One of its main advantages is the reduction on dimension when the partial differential equations of the BVP have constant coefficients and are homogeneous. In this case, the discretisation of the BVP is only required in the boundary and not in the domain. However, when the PDEs are not homogeneous or have variable coefficients, integral operators defined on the domain arise leading to the so-called boundary-domain integral equations (BDIEs), see \cite{carlosstokes, zenebelips}. 

The numerical computation of the solution for BIEs requires a fundamental solution which, although it might exist \cite{pomp}, it is not usually available explicitly in the variable coefficient case. To overcome this issue, a parametrix or Levi function is introduced, \cite{mikhailov1, pomp}. Nevertheless, a parametrix is not unique and the choice of an appropriate parametrix is not trivial. For a detailed discussion on the choice of a parametrix, please refer to \cite{carloscomp, pomp}.

Although, the arisal of domain terms in the BDIEs no longer reduces the dimension of the BVP, fast numerical methods can still be implemented, see for example, \cite{wenlandzhu, ravnik1, tibaut, numerics, sladek}. Furthermore, the algorithms to solve BDIEs \cite{2dnumerics, numerics2d} have shown that the theory of BDIEs can be useful at solving inverse problems with variable coefficients. 

\textit{In this paper,} we use the parametrix employed in \cite{carlosstokes} to derive systems of BDIEs for the compressible Stokes system with variable viscosity in $\mathbb{R}^{3}$ with mixed (Dirichlet and Neumann-traction) boundary conditions. We use this parametrix to obtain two systems of BDIEs equivalent to the Dirichlet problem for the compressible Stokes system with variable viscosity. As opposed as in the case with mixed boundary conditions, the kernels of the single layer potential and the hypersingular traction potential are non-trivial. Consequently,  particular attention is required at the time of proving existence and uniqueness of solution of the BDIE systems obtained. Although, there are some results in dimension two, see \cite{ayele}, the problem in dimension three differs substantially and  hence, it is the purpose of this paper.

\section{Preliminaries}
Let $\Omega=\Omega^{+}$ be a \textit{bounded} and simply connected domain and let $\Omega^{-}:=\mathbb{R}^{3}\smallsetminus \overline{\Omega}^{+}$. We will assume that the boundary $S := \partial\Omega$ is simply connected, closed and differentiable, i.e. $S\in\mathcal{C}^{1}$.
 
Let $\boldsymbol{v}$ be the velocity vector field;  $p$ the pressure scalar field and $\mu\in \mathcal{C}^{1}(\Omega)$ be the variable kinematic viscosity of the fluid such that  $\mu(\boldsymbol{x})>c>0$, we can define the Stokes operator as
\begin{align}
\mathcal{A}_{j}( p, \boldsymbol{v})(\boldsymbol{x} ):&= 
\frac{\partial}{\partial x_{i}}\sigma_{ji}( p, \boldsymbol{v})(\boldsymbol{x})\label{ch2operatorA}\\
&=\frac{\partial}{\partial x_{i}}\left(\mu(\boldsymbol{x})\nonumber
\left(\frac{\partial v_{j}}{\partial x_{i}} + \frac{\partial v_{i}}{\partial x_{j}}
-\frac{2}{3}\delta_{i}^{j} \div\boldsymbol{v}\right)\right)
-\frac{\partial p}{\partial x_{j}},\
j,i\in \lbrace 1,2,3\rbrace,
\end{align}
where $\delta_{i}^{j}$ is Kronecker symbol. Here and henceforth we assume the Einstein summation in repeated indices from 1 to 3.
We also denote the Stokes operator as $\boldsymbol{\mathcal{A}}=\{\mathcal{A}_{j}\}_{j=1}^3$. Ocassionally, we may use the following notation for derivative operators: $\partial_{j}=\partial_{x_{j}}:=\dfrac{\partial}{\partial x_{j}}$ with $j=1,2,3$; $\nabla:= (\partial_{1}, \partial_{2}, \partial_{3})$. 
 
For a compressible fluid $\div \boldsymbol{v}=g$, which gives the following stress tensor operator and the Stokes operator, respectively, to
\begin{align*}
\sigma_{ji}( p, \boldsymbol{v})(\boldsymbol{x})&=-\delta_{i}^{j}p(\boldsymbol{x}) + \mu(\boldsymbol{x})\left(\dfrac{\partial v_{i}(\boldsymbol{x})}{\partial x_{j}} + \dfrac{\partial v_{j}(\boldsymbol{x})}{\partial x_{i}}
-\frac{2}{3}\delta_{i}^{j}g\right),\\
\mathcal{A}_{j}( p, \boldsymbol{v})(\boldsymbol{x} )&=\frac{\partial}{\partial x_{i}}\left(\mu(\boldsymbol{x})
\left(\frac{\partial v_{j}}{\partial x_{i}} + \frac{\partial v_{i}}{\partial x_{j}}
-\frac{2}{3}\delta_{i}^{j}g\right)\right)
-\frac{\partial p}{\partial x_{j}},\
j,i\in \lbrace 1,2,3\rbrace.
\end{align*}

In what follows $ H^s(\Omega)$, $H^{s}(\partial \Omega)$ are the
Bessel potential spaces, where $s\in \mathbb R$ is an arbitrary real
number (see, e.g., \cite{lions, mclean}). We recall that $H^s$
coincide with the Sobolev--Slobodetski spaces $W^s_2$ for any
non-negative $s$. Let $H^{s}_{K}:= \lbrace g \in H^{s}(\mathbb{R}^{3}): \supp(g)\subseteq K\rbrace$ where $K$ is a compact subset of $\mathbb{R}^{3}$. In what follows we use the bold notation: $\boldsymbol{H}^{s}(\Omega) = [H^{s}(\Omega)]^{3}$ for 3-dimensional vector spaces.
We denote by $\widetilde{\boldsymbol{H}}^{s} (\Omega)$ the subspace of $\bs{H}^s (\mathbb R^3)$,
$
\widetilde{\boldsymbol{H}}^s (\Omega):=\{\bs{g}:\;\bs{g}\in \bs{H}^s  (\mathbb R^3),\; {\rm supp} \,\bs{g}
\subset\overline{\Omega}\}
$; 
similarly, $\widetilde{\boldsymbol{H}}^{s}(S_{1})=\lbrace \bs{g}\in \bs{H}^{s}(\partial \Omega),\ {\rm supp}\,\bs{g}\subset\overline{S}_{1}\rbrace$ is the Sobolev space of functions having support in $S_{1}\subset S$. To ensure unique-solvability of the Dirichlet Stokes problem in 3D, we will need the space $L_{*}^{2}(\Omega)=L^{2}(\Omega)/{\mathbb{R}} =\lbrace q \in L^{2}(\Omega): \int_{\Omega}q ~dx =0 \rbrace$.

We will also make use of the following space, (cf. e.g. \cite{costabel, mikhailov1})
\begin{align*}
\bs{H}^{1,0}(\Omega;  \boldsymbol{\mathcal{A}})&:= 
\lbrace ( p, \boldsymbol{v})\in L_{2}(\Omega)\times \boldsymbol{H}^{1}(\Omega):  \boldsymbol{\mathcal{A}}(p,\bs{v})\in \boldsymbol{L}_{2}(\Omega)\rbrace,
\end{align*} 
endowed with the norm \begin{align*}
\parallel ( p, \boldsymbol{v}) \parallel_{\bs{H}^{1,0}(\Omega; \boldsymbol{\mathcal{A}})}&:=
\left(\parallel p \parallel^{2}_{L_{2}(\Omega)}+\parallel \boldsymbol{v} \parallel^{2}_{\boldsymbol{H}^{1}(\Omega)}+\parallel  \boldsymbol{\mathcal{A}}(p,\bs{v}) \parallel^{2}_{\boldsymbol{L}_{2}(\Omega)}\right)^{1/2}.
\end{align*}

Let us define the space
\begin{align*}
	\bs{H}^{1,0}_{*}(\Omega ;\boldsymbol{\mathcal{A}})&:=\lbrace (p,\bs{v})\in L_{*}^{2}(\Omega) \times \bs{H}^{1}(\Omega) : \boldsymbol{\mathcal{A}}(p,\bs{v})\in\textbf{L}^{2}(\Omega)\rbrace,
\end{align*}
endowed with the norm 
\begin{equation*}
\|( p,\bs{v} )\|^{2}_{\bs{H}^{1,0}_{*}(\Omega ;\boldsymbol{\mathcal{A}})}:=\Vert p\Vert ^{2}_{L_{*}^{2}(\Omega)} + \parallel \bs{v}\parallel^{2}_{\bs{H}^{1}(\Omega)}+ \parallel \boldsymbol{\mathcal{A}}(p,\bs{v})\parallel^{2}_{\textbf{L}^{2}(\Omega)}.
\end{equation*}

The operator $\bs{\mathcal{A}}$ acting on $(p, \bs{v})$ is well defined in the weak sense provided $\mu(\bs{x})\in L^{\infty}(\Omega)$ as
  \[\left\langle \bs{\mathcal{A}}(p, \bs{v}),\bs{u}\right\rangle_{\Omega} := -\mathcal{E}((p, \bs{v}),\bs{u}), \quad \quad \forall \bs{u}\in \widetilde{\bs{H}}^{1}(\Omega),\]  
where the form $ \mathcal{E}: \left[ L^{2}(\Omega)\times\bs{H}^{1}(\Omega)\right] \times \widetilde{\bs{H}}^{1}(\Omega)\longrightarrow \mathbb{R}$ is defined as 
\begin{equation}\label{ch2mathcalE}
\mathcal{E}\left((p,\bs{v}),\boldsymbol{u}\right) :=\int_{\Omega} \, E\left((p,\bs{v}),\boldsymbol{u}\right)(\boldsymbol{x})\, dx,
\end{equation}
and the function $E\left((p,\bs{v}),\boldsymbol{u}\right)$ is defined as 
\begin{align} 
E\left((p,\bs{v}),\boldsymbol{u}\right)(\boldsymbol{x}):&=\
\dfrac{1}{2}\mu(\bs{x})\left(\frac{\partial u_{i}(\boldsymbol{x})}{\partial x_{j}} + \frac{\partial u_{j}(\boldsymbol{x})}{\partial x_{i}}\right)\left(\frac{\partial v_{i}(\boldsymbol{x})}{\partial x_{j}} + \frac{\partial v_{j}(\boldsymbol{x})}{\partial x_{i}}\right)\nonumber\\
&\quad -\frac{2}{3}\mu(\bs{x})\div\boldsymbol{v}(\boldsymbol{x})\,\div \boldsymbol{u}(\boldsymbol{x})
-p(\boldsymbol{x})\div\boldsymbol{u}(\boldsymbol{x})\label{ch2exE}.
\end{align}

For sufficiently smooth functions $(p,\bs{v})\in \bs{H}^{s-1}(\Omega^\pm)\times H^{s}(\Omega^\pm)$ with $s>3/2$,  we can define the classical traction operators on the boundary $S$ as
\begin{equation}\label{ch2Tcl}
T^{\pm}_{i}( p, \boldsymbol{v})(\boldsymbol{x}):=\gamma^{\pm}\sigma_{ij}( p, \boldsymbol{v})(\boldsymbol{x})\,n_{j}(\boldsymbol{x}),
\end{equation}
where $n_{j}(\boldsymbol{x})$ denote components of the unit outward normal vector $\boldsymbol{n}(\boldsymbol{x})$ to the boundary $S$ of the domain $\Omega$ and $\gamma^{\pm}(\,\cdot \,)$ denote the trace operators from inside and outside $\Omega$. 

Traction operators \eqref{ch2Tcl} can be continuously extended to the {\em canonical} traction operators $\boldsymbol{T}^{\pm}:\bs{H}^{1,0}(\Omega ;\boldsymbol{\mathcal{A}}) \to \boldsymbol{H}^{-1/2}(\partial \Omega)$
defined in the weak form similar to \cite{costabel, traces, mikhailov1,carlosstokes} as
\begin{align*}
\langle \boldsymbol{T}^{\pm}( p, \boldsymbol{v}) , \boldsymbol{w}\rangle_{S}:=
\pm \int_{\Omega^{\pm}}\left[ \boldsymbol{\mathcal{A}}( p, \boldsymbol{v}) \boldsymbol{\gamma}^{-1}\boldsymbol{w} + E\left((p,\bs{v}),\boldsymbol{\gamma}^{-1}\boldsymbol{w}\right)\right]\, dx,\\ 
\forall\,  ( p, \boldsymbol{v})\in \bs{H}^{1,0}(\Omega^\pm, \boldsymbol{\mathcal{A}}),\ 
 \forall\, \boldsymbol{w}\in \boldsymbol{H}^{1/2}(\partial \Omega).
\end{align*}
Here the operator 
$\boldsymbol{\gamma}^{-1}:\boldsymbol{H}^{1/2}(\partial \Omega)\to 
\boldsymbol{H}^{1}(\Omega)$ 
denotes a continuous right inverse of the trace operator 
$\boldsymbol{\gamma}: \boldsymbol{H}^{1}(\Omega)\to\boldsymbol{H}^{1/2}(\partial \Omega)$.

Furthermore, if $(p,\bs{v})\in \bs{H}^{1,0}(\Omega , \boldsymbol{\mathcal{A}})$ and $\boldsymbol{u}\in \boldsymbol{H}^{1}(\Omega)$, the following first Green identity holds, cf. \cite{costabel, traces, mikhailov1, carlosstokes},
\begin{equation}\label{ch2GF1}
\langle \boldsymbol{T}^{+}(p,\bs{v}) ,\boldsymbol{\gamma}^{+}\boldsymbol{u}\rangle_{S}=\displaystyle\int_{\Omega}[ \boldsymbol{\mathcal{A}}(p,\bs{v})\boldsymbol{u} + E\left((p,\bs{v}),\boldsymbol{u}\right)(\boldsymbol{x})] dx.
\end{equation}

Applying the identity \eqref{ch2GF1} to the pairs $( p, \boldsymbol{v}), (q,\boldsymbol{u}) \in \bs{H}^{1,0}(\Omega , \boldsymbol{\mathcal{A}})$ with exchanged roles and subtracting the one from the other, we arrive at the second Green identity, cf. \cite{mclean, traces, carlosstokes},
\begin{align}\label{ch2secondgreen}
&\int_{\Omega}\left[ \mathcal{A}_{j}( p, \boldsymbol{v})u_{j} - \mathcal{A}_{j}(q, \boldsymbol{u})v_{j} + q\, \div\,\boldsymbol{v} - p\, \div\,\boldsymbol{u}\right]\,\, dx =\nonumber\\
& \langle \boldsymbol{T}^{+}(p,\bs{v}) ,\boldsymbol{\gamma}^{+}\boldsymbol{u}\rangle_{S} - \langle \boldsymbol{T}^{+}(q,\bs{u}) ,\boldsymbol{\gamma}^{+}\boldsymbol{v}\rangle_{S}. 
\end{align}
Now we are ready to define the Dirichlet BVP for which we aim to derive equivalent BDIES and investigate the existence and uniqueness of their solutions.

{\em For $\boldsymbol{f}\in \boldsymbol{L}_{2}(\Omega)$, $g\in L^{2}(\Omega)$ and $\boldsymbol{\varphi}_{0}\in \boldsymbol{H}^{1/2}(\partial \Omega)$, find $( p, \boldsymbol{v})\in \bs{H}^{1,0}(\Omega,\boldsymbol{\mathcal{A}})$  such that:}
\begin{subequations}
\label{ch2BVPM}
\begin{align}
\label{ch2BVP1}
             \boldsymbol{\mathcal{A}}(p,\bs{v})(\boldsymbol{x})&=\boldsymbol{f}(\boldsymbol{x}),\hspace{0.5em} \boldsymbol{x}\in\Omega,\\
             \textnormal{div}(\boldsymbol{v})(\boldsymbol{x})&=g(\boldsymbol{x}),\hspace{0.5em} \boldsymbol{x}\in\Omega,\label{ch2BVPdiv}\\
 \label{ch2BVPD}     \boldsymbol{\gamma}^{+}\boldsymbol{v}(\boldsymbol{x})&=\boldsymbol{\boldsymbol{\boldsymbol{\varphi}}}_{0}(\boldsymbol{x}),
 \hspace{0.3em} \boldsymbol{x}\in S.           
\end{align}
\end{subequations}
Applying the first Green identity it is easy to prove the following uniqueness result.
\begin{theorem}\label{ch2BVPUS}
The Dirichlet BVP \eqref{ch2BVPM} has at most one solution in the space $\bs{H}^{1,0}_{*}(\Omega , \boldsymbol{\mathcal{A}})$. In $\bs{H}^{1,0}(\Omega , \boldsymbol{\mathcal{A}})$, the Dirichlet BVP \eqref{ch2BVPM} has at most one solution for $\bs{v}$ and the pressure $p$ is unique up to a constant. 
\end{theorem}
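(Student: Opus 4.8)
The plan is to exploit linearity together with the first Green identity \eqref{ch2GF1}. Suppose $(p_1,\bs{v}_1)$ and $(p_2,\bs{v}_2)$ are two solutions of \eqref{ch2BVPM} in the space under consideration. Since $\boldsymbol{\mathcal{A}}$, the divergence, and the trace operator $\boldsymbol{\gamma}^+$ are all linear, their difference $(p,\bs{v}):=(p_1-p_2,\bs{v}_1-\bs{v}_2)$ solves the homogeneous problem $\boldsymbol{\mathcal{A}}(p,\bs{v})=\bs{0}$ and $\div\bs{v}=0$ in $\Omega$, together with $\boldsymbol{\gamma}^+\bs{v}=\bs{0}$ on $S$. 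It therefore suffices to show that this homogeneous problem forces $\bs{v}=\bs{0}$ and $p$ constant, the constant being zero in the $L^2_*$ setting.

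First I would apply the first Green identity \eqref{ch2GF1} with $\bs{u}=\bs{v}\in\bs{H}^1(\Omega)$. Because $\boldsymbol{\mathcal{A}}(p,\bs{v})=\bs{0}$, the volume term containing $\boldsymbol{\mathcal{A}}$ drops out, and because $\boldsymbol{\gamma}^+\bs{v}=\bs{0}$ the boundary pairing on the left-hand side vanishes, leaving $\int_{\Omega}E\big((p,\bs{v}),\bs{v}\big)(\bs{x})\,dx=0$. Substituting $\bs{u}=\bs{v}$ into the definition \eqref{ch2exE} and using $\div\bs{v}=0$ to annihilate the last two terms reduces this to
\begin{equation*}
\frac{1}{2}\int_{\Omega}\mu(\bs{x})\left(\frac{\partial v_i}{\partial x_j}+\frac{\partial v_j}{\partial x_i}\right)\left(\frac{\partial v_i}{\partial x_j}+\frac{\partial v_j}{\partial x_i}\right)dx=0 .
\end{equation*}
Since $\mu(\bs{x})>c>0$, the integrand is bounded below by $2c$ times the squared symmetric gradient, so I would conclude that the strain tensor $e_{ij}(\bs{v}):=\tfrac12(\partial_j v_i+\partial_i v_j)$ vanishes almost everywhere in $\Omega$.

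The crux of the argument, and the only step I expect to require care, is the passage from $e_{ij}(\bs{v})=0$ and $\boldsymbol{\gamma}^+\bs{v}=\bs{0}$ to $\bs{v}=\bs{0}$. On a connected domain the kernel of the symmetric-gradient operator consists precisely of the infinitesimal rigid motions $\bs{v}(\bs{x})=\bs{a}+\bs{b}\times\bs{x}$ with constant $\bs{a},\bs{b}\in\mathbb{R}^3$; this is the null space appearing in Korn's inequality. A nonzero rigid motion cannot vanish identically on the whole closed surface $S$, so the homogeneous Dirichlet condition $\boldsymbol{\gamma}^+\bs{v}=\bs{0}$ forces $\bs{a}=\bs{b}=\bs{0}$, and hence $\bs{v}=\bs{0}$ throughout $\Omega$.

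Finally, with $\bs{v}=\bs{0}$ the equation $\boldsymbol{\mathcal{A}}(p,\bs{0})=\bs{0}$ reduces, by the definition \eqref{ch2operatorA}, to $\nabla p=\bs{0}$, so $p$ is constant on the connected domain $\Omega$. This already yields the second assertion: in $\bs{H}^{1,0}(\Omega,\boldsymbol{\mathcal{A}})$ the velocity is uniquely determined and $p$ is determined only up to an additive constant. For the first assertion, in $\bs{H}^{1,0}_{*}(\Omega,\boldsymbol{\mathcal{A}})$ the pressure additionally satisfies $\int_{\Omega}p\,dx=0$; a constant with zero mean must be zero, so $p=0$ and the solution is unique.
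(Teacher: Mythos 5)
Your proposal is correct and follows essentially the same route as the paper: test the first Green identity with the difference solution itself (the paper phrases this via the space $\bs{H}^{1}_{0,\textnormal{div}}(\Omega;S)$ and then takes $\bs{u}:=\bs{v}$, which is exactly your choice), conclude the symmetric gradient vanishes, identify $\bs{v}$ as a rigid motion killed by the homogeneous Dirichlet condition, and then deduce $\nabla p=0$, with the constant fixed to zero in the $L^{2}_{*}$ setting. No gaps; the rigid-motion characterization you invoke is the same fact the paper cites from McLean (Lemma 10.5).
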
    
\begin{proof}
Let us suppose that there are two possible solutions: $(p_{1}, \bs{v}_{1})$ and $(p_{2}, \bs{v}_{2})$ belonging to the space $\bs{H}_{*}^{1,0}(\Omega , \boldsymbol{\mathcal{A}})$, that satisfy the BVP \eqref{ch2BVPM}. Then, the pair $(p, \bs{v}):=(p_{2}, \bs{v}_{2})-(p_{1}, \bs{v}_{1})$ also belongs to the space $\bs{H}_{*}^{1,0}(\Omega , \boldsymbol{\mathcal{A}})$ and satisfies the following homogeneous mixed BVP
\begin{subequations}
\label{ch2BVPMh}
\begin{align}
\label{ch2BVP1h}
             \boldsymbol{\mathcal{A}}(p,\bs{v})(\boldsymbol{x})&=\boldsymbol{0},\hspace{0.5em} \boldsymbol{x}\in\Omega,\\
             \textnormal{div}(\boldsymbol{v})(\boldsymbol{x})&=0,\hspace{0.5em} \boldsymbol{x}\in\Omega,\label{ch2BVPdivh}\\
 \label{ch2BVPDh}     \boldsymbol{\gamma}^{+}\boldsymbol{v}(\boldsymbol{x})&=\boldsymbol{0},
 \hspace{0.3em} \boldsymbol{x}\in S.           
\end{align}
\end{subequations}
The first Green identity \eqref{ch2GF1} holds for any $\bs{u}\in \bs{H}^{1}(\Omega)$ and for any pair  $(p, \bs{v})\in \bs{H}_{*}^{1,0}(\Omega , \boldsymbol{\mathcal{A}})$. Hence, we can choose $\bs{u}\in \bs{H}_{0,\textnormal{div}}^{1}(\Omega;S)\subset \bs{H}^{1}(\Omega)$,  
where the space $\bs{H}_{0,\textnormal{div}}^{1}(\Omega;S)$ is defined as 
\[\bs{H}_{0,\textnormal{div}}^{1}(\Omega;S):=\lbrace \bs{u}\in \bs{H}^{1}(\Omega): \bs{\gamma}_{}^{+}\bs{u}=\bs{0}, \,\,\, {\rm div\,}\bs{u}=0 \,\,\, in\,\, \Omega\rbrace.  \]
Since $(p, \bs{v})\in \bs{H}_{*}^{1,0}(\Omega , \boldsymbol{\mathcal{A}})$, 
the first Green identity can be applied to $\bs{u}\in \bs{H}_{0,\textnormal{div}}^{1}(\Omega;S)$ and $(p, \bs{v})\in \bs{H}_{*}^{1,0}(\Omega , \boldsymbol{\mathcal{A}})$, 
\begin{equation}\label{ch2eqhalf}
\int_{\Omega} 
\dfrac{1}{2}\mu(\bs{x})\left(\frac{\partial u_{i}(\boldsymbol{x})}{\partial x_{j}} + \frac{\partial u_{j}(\boldsymbol{x})}{\partial x_{i}}\right)\left(\frac{\partial v_{i}(\boldsymbol{x})}{\partial x_{j}} +\frac{\partial v_{j}(\boldsymbol{x})}{\partial x_{i}} \right)dx=0.
\end{equation}  
In particular, one could choose $\bs{u}:=\bs{v}$ since $\bs{v}\in \bs{H}_{0,\textnormal{div}}^{1}(\Omega;S)$.
Then, the first Green identity now reads:
 \[  \int_{\Omega} \
\dfrac{1}{2}\mu(\bs{x})\left(\frac{\partial v_{i}(\boldsymbol{x})}{\partial x_{j}}+\frac{\partial v_{j}(\boldsymbol{x})}{\partial x_{i}}\right)^{2} dx\,=\, 0.\] 
As $\mu(\bs{x})>0$, the only possibility is that $\bs{v}(\bs{x})= \bs{a}+\bs{b}\times \bs{x}$, i.e., $\bs{v}$ is a rigid movement, \cite[Lemma 10.5]{mclean}. Nevertheless, taking into account the Dirichlet condition \eqref{ch2BVPDh}, we deduce that $\bs{v}\equiv \bs{0}$. Hence, $\bs{v}_{1}=\bs{v}_{2}$. 

Considering now $\bs{v}\equiv \bs{0}$ and keeping in mind \eqref{ch2BVPdivh}, we have
$\bs{\mathcal{A}}(p,\bs{v})(\bs{x}) =\textbf{0}$ and then we get $\nabla p=0$. Since $p \in L_{*}^{2}(\Omega)$, we get  $p=0$, what implies that $p_{1}=p_{2}$. Otherwise, if $\nabla p=0$ and $p \in L^{2}(\Omega)$, then $p=c$ for some constant $c$ and thus $p_{1}=p_{2}+c$. 
\end{proof}  

\section{Parametrix and Remainder}
When $\mu(\boldsymbol{x})=1$, the operator $\boldsymbol{\mathcal{A}}$ becomes the constant-coefficient Stokes operator $\boldsymbol{\mathring{\mathcal{A}}}$, for  which we know an explicit fundamental solution defined by the pair of fields $( \mathring{q}^{k} , \mathring{\boldsymbol{u}}^{k} ),$ where $\mathring{u}_{j}^{k}$ represent components of the incompressible velocity fundamental solution  and $\mathring{q}^{k}$ represent the components of the pressure fundamental solution (see e.g. \cite{steinbach}).
\begin{align*}
\mathring{q}^{k}(\boldsymbol{x},\boldsymbol{y})&=\frac{(x_{k}-y_{k})}{4\pi\vert \boldsymbol{x} - \boldsymbol{y}\vert^{3}},\\
\mathring{u}_{j}^{k}(\boldsymbol{x},\boldsymbol{y})&=-\frac{1}{8\pi}\left\lbrace \dfrac{\delta_{j}^{k}}{\vert \boldsymbol{x} - \boldsymbol{y}\vert}+\dfrac{(x_{j}-y_{j})(x_{k}-y_{k})}{\vert \boldsymbol{x} - \boldsymbol{y}\vert^{3}}\right\rbrace, \hspace{0.5em}j,k\in \lbrace 1,2,3\rbrace.
\end{align*}
Therefore, $(\mathring{q}^{k}, \mathring{\boldsymbol{u}}^{k})$ satisfy
\[\mathring{\mathcal{A}}_{j}(\mathring{q}^{k}, \mathring{\boldsymbol{u}}^{k})(\boldsymbol{x} )=\displaystyle \sum_{i=1}^{3}\dfrac{\partial^{2} \mathring{u}_{j}^{k}}{\partial x_{i}^{2}}  - \dfrac{\partial \mathring{q}^{k}}{\partial x_{j} }  = \delta_{j}^{k}\delta(\boldsymbol{x}-\boldsymbol{y}).
\]

Let us denote $\mathring{\sigma}_{ij}( p, \boldsymbol{v}):=\sigma_{ij}( p, \boldsymbol{v})\vert_{\mu=1}$.
Then, in the particular case $\mu=1$, the stress tensor 
$\mathring{\sigma}_{ij}( \mathring{q}^{k} , \mathring{\boldsymbol{u}}^{k} )(\boldsymbol{x}-\boldsymbol{y})$ reads as  
\[\mathring{\sigma}_{ij}( \mathring{q}^{k} , \mathring{\boldsymbol{u}}^{k} )(\boldsymbol{x}-\boldsymbol{y})= \frac{3}{4\pi}\frac{(x_{i}-y_{i})(x_{j}-y_{j})(x_{k}-y_{k})}{\vert \boldsymbol{x}-\boldsymbol{y}\vert^{5}},\]
and the boundary traction becomes
\begin{align*}
\mathring{T}_{i}(\boldsymbol{x}; \mathring{q}^{k} , \mathring{\boldsymbol{u}}^{k} )(\boldsymbol{x}, \boldsymbol{y}):&= \mathring{\sigma}_{ij}( \mathring{q}^{k} , \mathring{\boldsymbol{u}}^{k} )(\boldsymbol{x}-\boldsymbol{y})\,n_{j}(\boldsymbol{x})\\
&= \frac{3}{4\pi}\frac{(x_{i}-y_{i})(x_{j}-y_{j})(x_{k}-y_{k})}{\vert \boldsymbol{x}-\boldsymbol{y}\vert^{5}}\,n_{j}(\boldsymbol{x}).
\end{align*}

Let us define a pair of functions $(q^{k}, \boldsymbol{u}^{k})_{k=1,2,3}$ as
\begin{align}
q^{k}(\boldsymbol{x},\boldsymbol{y})&=
\frac{\mu(\boldsymbol{x})}{\mu(\boldsymbol{y})}\mathring{q}^{k}(\boldsymbol{x},\boldsymbol{y})
=\frac{\mu(\boldsymbol{x})}{\mu(\boldsymbol{y})}\dfrac{x_{k}-y_{k}}{4\pi\vert \boldsymbol{x} - \boldsymbol{y}\vert^{3}}, \hspace{0.5em}
j,k\in \lbrace 1,2,3\rbrace.
\label{ch2PRq}\\
u_{j}^{k}(\boldsymbol{x},\boldsymbol{y})&=
\frac{1}{\mu(\boldsymbol{y})}\mathring{u}_{j}^{k}(\boldsymbol{x},\boldsymbol{y}) =-\frac{1}{8\pi\mu(\boldsymbol{y})}\left\lbrace \frac{\delta_{j}^{k}}{\vert \boldsymbol{x} - \boldsymbol{y}\vert}+\frac{(x_{j}-y_{j})(x_{k}-y_{k})}{\vert \boldsymbol{x} - \boldsymbol{y}\vert^{3}}\right\rbrace, \label{ch2PRu}
\end{align}
Then,
\begin{align*}
&\sigma_{ij}(\boldsymbol{x};q^{k}, \boldsymbol{u}^{k})(\boldsymbol{x},\boldsymbol{y})=
\frac{\mu(\boldsymbol{x})}{\mu(\boldsymbol{y})}\mathring{\sigma}_{ij}( \mathring{q}^{k} , \mathring{\boldsymbol{u}}^{k} )(\boldsymbol{x}-\boldsymbol{y}),\\
&{T}_{i}(\boldsymbol{x}; q^{k}, \boldsymbol{u}^{k})(\boldsymbol{x}, \boldsymbol{y}):= 
\sigma_{ij}(\boldsymbol{x};q^{k},\boldsymbol{u}^{k})(\boldsymbol{x},\boldsymbol{y})\,n_{j}(\boldsymbol{x})=
\frac{\mu(\boldsymbol{x})}{\mu(\boldsymbol{y})}\mathring{T}_{i}(\boldsymbol{x}; \mathring{q}^{k} , \mathring{\boldsymbol{u}}^{k} )(\boldsymbol{x}, \boldsymbol{y}).
\end{align*}

Substituting \eqref{ch2PRu}-\eqref{ch2PRq} in the Stokes system with variable coefficient \eqref{ch2operatorA} gives
\begin{equation}\label{ch2param}
\mathcal{A}_{j}(\boldsymbol{x};q^{k}, \boldsymbol{u}^{k})(\boldsymbol{x},\boldsymbol{y})=
\delta_{j}^{k}\delta(\boldsymbol{x}-\boldsymbol{y})+R_{kj}(\boldsymbol{x},\boldsymbol{y}),
\end{equation}
where
\[R_{kj}(\boldsymbol{x},\boldsymbol{y})=
\frac{1}{\mu(\boldsymbol{y})}\frac{\partial \mu(\boldsymbol{x})}{\partial x_{i}}
\mathring{\sigma}_{ij}(\mathring{q}^{k}, \mathring{\boldsymbol{u}}^{k})(\boldsymbol{x}-\boldsymbol{y})=
\mathcal{O}(\vert \boldsymbol{x}-\boldsymbol{y}\vert)^{-2})\]
is a  weakly singular remainder.
This implies that
 $(q^{k}, \boldsymbol{u}^{k})$ is a parametrix of the operator  $\boldsymbol{\mathcal{A}}$.


\subsection{Volume and surface potentials}
Let us define the parametrix-based Newton-type  and  remainder vector potentials 
\begin{align*}
\mathcal{U}_k\boldsymbol{\rho}(\boldsymbol{y})=\mathcal{U}_{kj}{\rho}_j(\boldsymbol{y})&:=\displaystyle\int_{\Omega} u_{j}^{k}( \boldsymbol{x},\boldsymbol{y})\rho_{j}(\boldsymbol{x})\hspace{0.05em}dx,\\
\mathcal{R}_k\boldsymbol{\rho}(\boldsymbol{y})=\mathcal{R}_{kj}{\rho}_j(\boldsymbol{y})&:=
\int_{\Omega} R_{kj}(\boldsymbol{x},\boldsymbol{y})\rho_{j}(\boldsymbol{x})\hspace{0.05em}dx, 
\quad \boldsymbol{y}\in\mathbb{R}^3,
\end{align*}
for the velocity, and the  scalar Newton-type pressure and remainder potentials 
\begin{align}
{\mathcal{Q}}\boldsymbol{\rho}(\boldsymbol{y})=\mathcal{Q}_j{\rho}_j(\boldsymbol{y})
&:=
\displaystyle\int_{\Omega} q^{j}( \boldsymbol{x},\boldsymbol{y})\rho_{j}(\boldsymbol{x})dx, \quad \\
{\bs{\mathcal{Q}}}\rho(\boldsymbol{y})=\mathcal{Q}_j{\rho}(\boldsymbol{y})
&:=
\displaystyle\int_{\Omega} q^{j}( \boldsymbol{x},\boldsymbol{y})\rho(\boldsymbol{x})dx, \quad \\
\mathcal{R}^{\bullet}\boldsymbol{\rho}(\boldsymbol{y})=\mathcal{R}^{\bullet}_j{\rho}_j(\boldsymbol{y})&:=
2\,v.p.\int_{\Omega} \frac{\partial\mathring{q}^{j}(\boldsymbol{x},\boldsymbol{y})}{\partial x_{i}}
\frac{\partial \mu(\boldsymbol{x})}{\partial x_{i}}\rho_{j}(\boldsymbol{x})dx - \dfrac{4}{3}\rho_{j}\dfrac{\partial \mu}{\partial y_{j}}, 
\,\,\boldsymbol{y}\in\mathbb{R}^3,
\label{ch2Rb}
\end{align}
for the pressure. The integral in \eqref{ch2Rb} is understood as a 3D strongly singular integral in the Cauchy sense.

For the velocity, let us also define the parametrix-based single layer potential, double layer potential and their respective direct values on the boundary, as follows:
\begin{align*}
V_k\boldsymbol{\rho}(\boldsymbol{y})=V_{kj}{\rho}_j(\boldsymbol{y})&:=
-\int_{S}u_{j}^{k}(\boldsymbol{x},\boldsymbol{y})\rho_{j}(\boldsymbol{x})
\, dS(\boldsymbol{x}),\hspace{0.4em}\boldsymbol{y}\notin S,\\
W_k\boldsymbol{\rho}(\boldsymbol{y})=W_{kj}{\rho}_j(\boldsymbol{y})&:=
-\int_{S}T^{+}_{j}(\boldsymbol{x};q^{k},\boldsymbol{u}^{k})(\boldsymbol{x},\boldsymbol{y})
\rho_{j}(\boldsymbol{x})
\, dS(\boldsymbol{x}),\hspace{0.4em}\boldsymbol{y}\notin S,\\
\mathcal{V}_k\boldsymbol{\rho}(\boldsymbol{y})=\mathcal{V}_{kj}{\rho}_j(\boldsymbol{y})&:=
-\int_{S} u_{j}^{k}(\boldsymbol{x},\boldsymbol{y})\rho_{j}(\boldsymbol{x})
\, dS(\boldsymbol{x}),\hspace{0.4em}\boldsymbol{y}\in S,\\
\mathcal{W}_k\boldsymbol{\rho}(\boldsymbol{y})=\mathcal{W}_{kj}{\rho}_j(\boldsymbol{y})&:=
-\int_{S}T^{+}_{j}(\boldsymbol{x};q^{k},\boldsymbol{u}^{k})(\boldsymbol{x},\boldsymbol{y})\rho_{j}(\boldsymbol{x})
\, dS(\boldsymbol{x}),\hspace{0.4em}\boldsymbol{y}\in S.
\end{align*}

For pressure in the variable coefficient Stokes system, we will need the following single-layer and double layer potentials:
\begin{align*}
\Pi^{s} \pmb{\rho}(\textbf{y})&= \Pi^{s}_{j}\rho_{j}(\textbf{y})
=\int_{S} \mathring{q}^{j}( \boldsymbol{x},\boldsymbol{y})\rho_{j}(\boldsymbol{x})\hspace{0.05em}dS(\boldsymbol{x}),\\
	\Pi^{d} \pmb{\rho}(\textbf{\textbf{y}})&= \Pi^{d}_{j}\rho_{j}(\textbf{y}):=-
2\int_{S}\frac{\partial \mathring{q}^{j}(\boldsymbol{x},\boldsymbol{y})}{\partial n(\boldsymbol{x})}
\mu(\boldsymbol{x})\rho_{j}(\boldsymbol{x})\hspace{0.05em}dS(\boldsymbol{x}),\hspace{0.2em}\boldsymbol{y}\notin S.
\end{align*}

Let us also denote
\begin{align*}
\mathcal{W}'_k\boldsymbol{\rho}(\boldsymbol{y}) &=\mathcal{W}'_{kj}{\rho}_j(\boldsymbol{y}):=
-\int_{S}T^{+}_{j}(\boldsymbol{y};q^{k},\boldsymbol{u}^{k})(\boldsymbol{x},\boldsymbol{y})\rho_{j}(\boldsymbol{x})
\, dS(\boldsymbol{x}),\hspace{0.4em}\boldsymbol{y}\in S,\\
\mathcal{L}^{\pm}_k\boldsymbol{\rho}(\boldsymbol{y})  &:= T^{\pm}_k(\Pi\boldsymbol{\rho}, \boldsymbol{W\rho})(\boldsymbol{y}), \quad \boldsymbol{y}\in S,
\end{align*}
where $T^{\pm}_k$ are the traction operators for the {\em compressible} fluid.

The parametrix-based integral operators, depending on the variable coefficient $\mu(\boldsymbol{y})$, can be expressed in terms of the corresponding integral operators for the constant coefficient case, $\mu=1$,
\begin{align} \label{ch2relationU}
&\bs{\mathcal{U}}\pmb{\rho}(\textbf{y})=\frac{1}{\mu (\textbf{y})}\mathring{\bs{\mathcal{U}}}\pmb{\rho}(\textbf{y}),\\
&\left[\bs{\mathcal{R}}\pmb{\rho}\right]_{k}(\textbf{y})=- \frac{1}{\mu (\textbf{y})}\left[\dfrac{\partial}{\partial y_{j}}\mathring{\mathcal{U}}_{ki}(\rho_{j}\partial_{i}\mu)(\textbf{y})+ \dfrac{\partial}{\partial y_{i}}\mathring{\mathcal{U}}_{kj}(\rho_{j}\partial_{i}\mu)(\textbf{y})- \mathcal{\mathring{Q}}_{k}(\rho_{j}\partial_{j}\mu)(\textbf{y}) \right],\label{ch2relationR}\\
&\bs{\mathcal{Q}}\rho(\textbf{y})=\frac{1}{\mu(\textbf{y})}\mathcal{\mathring{\bs{\mathcal{Q}}}}(\mu \rho) (\textbf{y}),\label{ch2relationQ2}\\ 
&\mathcal{R}^{\bullet}_j{\rho}_j(\boldsymbol{y})=
-2\frac{\partial}{\partial y_i}\mathring{\mathcal{Q}}_j(\rho_j\partial_i\mu)(\boldsymbol{y})-2\rho_{j}(\bs{y})\dfrac{\partial \mu}{\partial y_{j}}(\bs{y}),\label{ch2relationRdot}\\
&\bs{V}\boldsymbol{\rho}(\boldsymbol{y})=
\frac{1}{\mu(\boldsymbol{y})}\mathring{\bs{V}}\boldsymbol{\rho}(\boldsymbol{y}),\qquad
\bs{W}\boldsymbol{\rho}(\boldsymbol{y})=
\frac{1}{\mu(\boldsymbol{y})}\mathring{\bs{W}}(\mu\boldsymbol{\rho})(\boldsymbol{y}),\label{ch2relationVW}\\
&\bs{\mathcal{V}}\boldsymbol{\rho}(\boldsymbol{y})
=\frac{1}{\mu(\boldsymbol{y})}\mathring{\bs{\mathcal{V}}}\boldsymbol{\rho}(\boldsymbol{y}),\qquad
\bs{\mathcal{W}}\boldsymbol{\rho}(\boldsymbol{y})
=\frac{1}{\mu(\boldsymbol{y})}\mathring{\bs{\mathcal{W}}}(\mu\boldsymbol{\rho})(\boldsymbol{y}),\label{ch2relationcalVW}\\
&	\Pi^{s}\boldsymbol{\rho}(\boldsymbol{y})=
\mathring{	\Pi^{s}}\boldsymbol{\rho}(\boldsymbol{y}), \hspace{3.5em}
	\Pi^{d}\boldsymbol{\rho}(\boldsymbol{y})=\mathring{	\Pi^{d}}(\mu\boldsymbol{\rho})(\boldsymbol{y}),
\label{ch2relationP}\\
&\mathcal{W}'_k\boldsymbol{\rho} = \mathring{\mathcal{W}}'_k\boldsymbol{\rho} - 
\left(\frac{\partial_i\mu}{\mu}\,\mathring{\mathcal{V}}_k\boldsymbol{\rho}+
\frac{\partial_k\mu}{\mu}\,\mathring{\mathcal{V}}_i\boldsymbol{\rho}
-\frac{2}{3}\delta_i^k\frac{\partial_j\mu}{\mu}\,\mathring{\mathcal{V}}_j\boldsymbol{\rho}
\right)n_i,\label{ch2relationTV}\\
&\widehat{\mathcal{L}}_k(\boldsymbol{\tau}):=\mathring{\mathcal{L}}_k(\mu\boldsymbol{\tau}).\label{ch2relationL}
\end{align}
\begin{rem} Note that the velocity potentials defined above are {\em not incompressible for the variable coefficient $\mu(\boldsymbol{y})$}.
\end{rem}
Using the potential relations \eqref{ch2relationU}-\eqref{ch2relationTV} and the analogous mapping properties for the constant coefficient case, it is easy to proof the following mapping properties. The theorems presented in this section have been proved in \cite[Section 4]{carlosstokes} and we state them here for quick reference, since these are essential for proving the main results of \textit{this paper.} 

\begin{theorem}\label{ch2thmUR:theo} The following operators are continuous
\begin{align}
\bs{\mathcal{U}}&:\widetilde{\bs{H}}^{s}(\Omega) \to \bs{H}^{s+2}(\Omega),\hspace{0.5em} s\in \mathbb{R},\label{ch2OpC1}\\
\bs{\mathcal{U}}&: \bs{H}^{s}(\Omega) \to \bs{H}^{s+2}(\Omega),\hspace{0.5em} s>-1/2,\label{ch2OpC2}\\
\bs{\mathcal{R}}&:\widetilde{\bs{H}}^{s}(\Omega) \to \bs{H}^{s+1}(\Omega),\hspace{0.5em} s\in \mathbb{R},\label{ch2OpC3}\\
\bs{\mathcal{R}}&: \bs{H}^{s}(\Omega) \to \bs{H}^{s+1}(\Omega),\hspace{0.5em} s>-1/2,\label{ch2OpC4}\\
\mathcal{Q}&:\widetilde{\bs{H}}^{s}(\Omega)  \to H^{s+1}(\Omega),\hspace{0.5em}s\in \mathbb{R},\label{ch2OpC7}\\
\mathcal{Q}&:\bs{H}^{s}(\Omega)  \to H^{s+1}(\Omega),\hspace{0.5em}s>-1/2,\label{ch2OpC7b}\\  
\mathcal R^{\bullet}&: \widetilde{\bs{H}}^{s}(\Omega) \to H^{s}(\Omega),\hspace{0.5em}s\in \mathbb{R}.\label{ch2OpC8}\\
\mathcal R^{\bullet}&: \bs{H}^{s}(\Omega) \to H^{s}(\Omega),\hspace{0.5em}s>-1/2.\label{ch2OpC8b}
\end{align}
\end{theorem}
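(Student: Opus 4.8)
The plan is to reduce every statement to mapping properties of the two constant-coefficient Newtonian-type volume potentials $\mathring{\bs{\mathcal{U}}}$ and $\mathring{\bs{\mathcal{Q}}}$ by means of the representation formulas \eqref{ch2relationU}, \eqref{ch2relationR}, \eqref{ch2relationQ2} and \eqref{ch2relationRdot}, and then to control the variable-coefficient factors $1/\mu$, $\mu$ and $\partial_i\mu$ as pointwise multipliers on Bessel-potential spaces. Three ingredients enter: (i) the smoothing orders of the constant-coefficient potentials, read off from the homogeneity of their kernels; (ii) the boundedness of multiplication by $\mu\in\mathcal{C}^1(\Omega)$, by $1/\mu$ and by $\partial_i\mu$ on the relevant $H^s$-scales; and (iii) a bookkeeping of the order lost under each differentiation $\partial_{y}$ appearing in \eqref{ch2relationR} and \eqref{ch2relationRdot}.

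First I would treat $\bs{\mathcal{U}}$ and $\mathcal{Q}$. The kernel $\mathring{u}^k_j$ is $O(|\bs{x}-\bs{y}|^{-1})$, i.e. of the same type as the harmonic Newtonian kernel in $\mathbb{R}^3$; hence, componentwise, $\mathring{\bs{\mathcal{U}}}$ is a classical volume potential that smooths by two orders, so that $\mathring{\bs{\mathcal{U}}}:\widetilde{\bs{H}}^s(\Omega)\to\bs{H}^{s+2}(\mathbb{R}^3)$ is continuous for every $s\in\mathbb{R}$, cf. \cite{mikhailov1}. Restricting to $\Omega$ and invoking \eqref{ch2relationU}, I would then multiply by $1/\mu$, which is admissible since $\mu\in\mathcal{C}^1(\Omega)$ with $\mu>c>0$; this yields \eqref{ch2OpC1}. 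For the non-tilde statement \eqref{ch2OpC2} I would precompose with a continuous extension $\bs{H}^s(\Omega)\to\widetilde{\bs{H}}^s(\mathbb{R}^3)$, which accounts for the range restriction $s>-1/2$. The pressure potential is handled identically after noting that $\mathring{q}^j=O(|\bs{x}-\bs{y}|^{-2})$ smooths by one order, so $\mathring{\bs{\mathcal{Q}}}:\widetilde{\bs{H}}^s\to H^{s+1}$; by \eqref{ch2relationQ2} one pre-multiplies the density by $\mu$ and post-multiplies the output by $1/\mu$, giving \eqref{ch2OpC7}--\eqref{ch2OpC7b}.

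The remainder operators are then immediate from the above once the order counting is done. In \eqref{ch2relationR} the density is first multiplied by $\partial_i\mu$ (keeping it in $\widetilde{\bs{H}}^s$), fed into $\mathring{\mathcal{U}}$ (gain of two orders, to $s+2$) and differentiated once in $\bs{y}$ (loss of one order, to $s+1$), while the $\mathring{\mathcal{Q}}$-term carries no derivative and already lands in $H^{s+1}$; after the outer factor $1/\mu$ this proves \eqref{ch2OpC3}--\eqref{ch2OpC4}, consistent with $R_{kj}=O(|\bs{x}-\bs{y}|^{-2})$ being weakly singular of smoothing order one. Likewise, in \eqref{ch2relationRdot} the potential term $\partial_{y_i}\mathring{\mathcal{Q}}_j(\rho_j\partial_i\mu)$ gains one order and loses one, landing in $H^s$, while the purely local term $-2\rho_j\partial_{y_j}\mu$ stays in $H^s$; this gives \eqref{ch2OpC8}--\eqref{ch2OpC8b}. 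The non-tilde versions with $s>-1/2$ follow by the same extension argument as for $\bs{\mathcal{U}}$.

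I expect the genuine difficulty to be the multiplier step rather than the order counting. Multiplication by a $\mathcal{C}^1$ function, and \emph{a fortiori} by $\partial_i\mu$ which is only continuous, is bounded on $H^s$ only for a limited range of $s$, so the clean reductions above must be accompanied by multiplier estimates adapted to the regularity of $\mu$ and to the target smoothness; this is precisely where the hypotheses on $\mu$ are genuinely used, and it is what forces one to work with the specific representations \eqref{ch2relationR} and \eqref{ch2relationRdot}, in which the coefficient is differentiated at most once and the strongly singular Cauchy integral defining $\mathcal{R}^{\bullet}$ has already been resolved into a weakly singular potential plus a local term. The remaining care is purely in matching the extension/restriction apparatus so that the non-tilde ranges $s>-1/2$ are respected.
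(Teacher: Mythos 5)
Your strategy is the same as the paper's: Theorem \ref{ch2thmUR:theo} is not proved afresh in this paper but quoted from \cite[Section 4]{carlosstokes}, and the proof indicated there is precisely your reduction --- the relations \eqref{ch2relationU}, \eqref{ch2relationR}, \eqref{ch2relationQ2} and \eqref{ch2relationRdot} transfer the smoothing orders of the constant-coefficient potentials (two orders for $\mathring{\bs{\mathcal{U}}}$, one for $\mathring{\bs{\mathcal{Q}}}$) to the variable-coefficient operators, with the coefficient factors treated as pointwise multipliers. Your order bookkeeping for $\bs{\mathcal{R}}$ and $\mathcal{R}^{\bullet}$ (gain two, lose one under $\partial_y$; gain one, lose one, plus a local term) is correct, and your closing observation that the multiplier step is where the regularity of $\mu$ is genuinely consumed is apt: with $\mu$ merely $\mathcal{C}^{1}$ the statements for \emph{all} $s\in\mathbb{R}$ do require multiplier hypotheses of exactly this kind, which the paper inherits from the assumptions of \cite{carlosstokes}.

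There is, however, one step that fails as written: the passage from the $\widetilde{\bs{H}}^{s}$ statements to the $\bs{H}^{s}$ statements by ``precomposing with a continuous extension $\bs{H}^{s}(\Omega)\to\widetilde{\bs{H}}^{s}(\Omega)$''. Because the volume potentials integrate only over $\Omega$, the extension you precompose with must be the extension by zero (any other extension changes the operator), and extension by zero is bounded $H^{s}(\Omega)\to\widetilde{H}^{s}(\Omega)$ only for $-1/2<s<1/2$. For $s\ge 1/2$ no bounded right inverse of the restriction map $\widetilde{\bs{H}}^{s}(\Omega)\to\bs{H}^{s}(\Omega)$ exists at all: the constant function $1$ belongs to $H^{1}(\Omega)$, but the only distribution supported in $\overline{\Omega}$ restricting to it is $\chi_{\Omega}$, which lies in $H^{s}(\mathbb{R}^{3})$ only for $s<1/2$. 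Nor can you trade the zero extension for a Stein-type extension: the difference of the two resulting potentials is the potential of a density supported in $\overline{\Omega^{-}}$, which is indeed $C^{\infty}$ inside $\Omega$ but is not thereby controlled in $H^{s+2}(\Omega)$ up to the boundary. Hence your argument proves \eqref{ch2OpC2}, \eqref{ch2OpC4}, \eqref{ch2OpC7b} and \eqref{ch2OpC8b} only on the range $-1/2<s<1/2$. Closing the case $s\ge 1/2$ needs a genuinely different ingredient, which is what the sources behind \cite{carlosstokes} (e.g.\ \cite{mikhailov1} and the references therein) rely on: either the transmission property of these kernels, i.e.\ boundedness on $H^{s}(\Omega)$, for all $s>-1/2$, of pseudodifferential operators of the appropriate order truncated to $\Omega$; or an induction over integer $s$ in which derivatives of the volume potential are converted, by integration by parts, into volume potentials of derivatives of the density plus single-layer potentials of its trace, followed by interpolation to non-integer $s$.
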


\begin{corollary}\label{ch2cormpQ}The following operators are continuous
\begin{align}
\mathring{\mathcal{Q}}_{k}&:\widetilde{H}^{s}(\Omega)  \to \bs{H}^{s+1}(\Omega),\hspace{0.5em}s\in \mathbb{R},\label{ch2mpQ}\\
\mathring{\mathcal{Q}}_{k}&:H^{s}(\Omega)  \to \bs{H}^{s+1}(\Omega),\hspace{0.5em}s>-1/2\label{ch2mpQ2}.
\end{align}
\end{corollary}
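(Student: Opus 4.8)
The plan is to reduce the constant-coefficient pressure potential $\mathring{\mathcal{Q}}_k$ to a single differentiation of the classical harmonic Newtonian potential, whose order-$(-2)$ mapping property is already available. Writing $P(\boldsymbol{x},\boldsymbol{y})=-1/(4\pi|\boldsymbol{x}-\boldsymbol{y}|)$ for the fundamental solution of the Laplacian in $\mathbb{R}^3$, a direct computation gives
\[
\mathring{q}^{k}(\boldsymbol{x},\boldsymbol{y})=\frac{x_k-y_k}{4\pi|\boldsymbol{x}-\boldsymbol{y}|^{3}}=\frac{\partial P}{\partial x_k}(\boldsymbol{x},\boldsymbol{y})=-\frac{\partial P}{\partial y_k}(\boldsymbol{x},\boldsymbol{y}),
\]
the last equality holding because $P$ depends only on $\boldsymbol{x}-\boldsymbol{y}$. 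Consequently, for the harmonic volume potential $\mathring{\mathcal{P}}\rho(\boldsymbol{y}):=\int_{\Omega}P(\boldsymbol{x},\boldsymbol{y})\rho(\boldsymbol{x})\,dx$ one has the identity $\mathring{\mathcal{Q}}_k\rho=-\partial_{y_k}\mathring{\mathcal{P}}\rho$, where the differentiation of the $\mathbb{R}^3$-potential is understood distributionally. Justifying this differentiation for low-regularity densities, before restricting to $\Omega$, is the first step I would carry out.

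The second step is to invoke the mapping property of the Newtonian potential, namely $\mathring{\mathcal{P}}:\widetilde{H}^{s}(\Omega)\to H^{s+2}(\Omega)$ for every $s\in\mathbb{R}$, and $\mathring{\mathcal{P}}:H^{s}(\Omega)\to H^{s+2}(\Omega)$ for $s>-1/2$. This is classical: $\mathring{\mathcal{P}}$ is a pseudodifferential operator of order $-2$, and it is exactly the isotropic ($\delta_j^k/|\boldsymbol{x}-\boldsymbol{y}|$) part of the constant-coefficient velocity potential $\mathring{\bs{\mathcal{U}}}$ whose mapping properties underlie \eqref{ch2OpC1}--\eqref{ch2OpC2}. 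Since differentiation $\partial_{y_k}:H^{s+2}(\Omega)\to H^{s+1}(\Omega)$ is continuous for every $s$ (applied to the $\mathbb{R}^3$-potential before restriction), composing it with $\mathring{\mathcal{P}}$ yields continuity of $\mathring{\mathcal{Q}}_k=-\partial_{y_k}\mathring{\mathcal{P}}$ as a map $\widetilde{H}^{s}(\Omega)\to H^{s+1}(\Omega)$, and likewise the $H^s$ statement; collecting the three components $k=1,2,3$ gives the asserted maps into the vector space $\bs{H}^{s+1}(\Omega)$.

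I would deliberately avoid deriving the corollary from the potential relation \eqref{ch2relationQ2}, even though that route looks tempting: solving \eqref{ch2relationQ2} for $\mathring{\mathcal{Q}}_k$ introduces factors of $\mu$ and $1/\mu$, and since $\mu\in\mathcal{C}^1(\Omega)$ is only a bounded multiplier on $H^{t}(\Omega)$ for $|t|\le 1$, that argument would recover the estimate only on a bounded range of $s$. The direct route involves no variable coefficient whatsoever, so it delivers the full scale $s\in\mathbb{R}$ at once. The one genuine subtlety, and the step I expect to require the most care, is the threshold $s>-1/2$ in \eqref{ch2mpQ2}: for $H^s(\Omega)$ densities the potential is taken over $\Omega$ without extension by zero, and continuity into $H^{s+2}(\Omega)$ fails below $-1/2$ because extension/restriction on the Sobolev scale ceases to be bounded there. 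This I would handle exactly as in the proof of Theorem \ref{ch2thmUR:theo} for $\bs{\mathcal{U}}$ and $\mathcal{Q}$, where the identical threshold appears in \eqref{ch2OpC2} and \eqref{ch2OpC7b}, so no argument beyond the Newtonian-potential analogue is needed.
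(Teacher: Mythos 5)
Your proof is correct, but it is not the route the paper itself suggests. The paper states this as a corollary of Theorem \ref{ch2thmUR:theo}, with all proofs in that section deferred to \cite[Section 4]{carlosstokes}, and the intended corollary-style argument is pure specialization: the constant-coefficient operator is the particular case $\mu\equiv 1$ of the parametrix-based one, i.e. $\mathring{\mathcal{Q}}_{k}\rho=\mathcal{Q}(\rho\,\bs{e}_{k})\big\vert_{\mu\equiv 1}$ with $\bs{e}_{k}$ the $k$-th coordinate vector, so \eqref{ch2mpQ}--\eqref{ch2mpQ2} follow at once from \eqref{ch2OpC7}--\eqref{ch2OpC7b} applied to single-component densities and collected over $k=1,2,3$. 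Note that $\mu\equiv 1$ is an admissible viscosity ($\mathcal{C}^{1}$, bounded below), so this specialization involves no multiplication by $\mu$ or $1/\mu$ whatsoever; your (legitimate) objection about $\mathcal{C}^{1}$ multipliers limiting the range of $s$ applies to inverting the relation \eqref{ch2relationQ2}, but not to setting $\mu\equiv 1$ directly in the theorem. Your route --- writing $\mathring{q}^{k}=\partial_{x_{k}}P=-\partial_{y_{k}}P$ with $P$ the Laplace fundamental solution, hence $\mathring{\mathcal{Q}}_{k}=-\partial_{y_{k}}\mathring{\mathcal{P}}$, and composing the classical Newtonian-potential property $\mathring{\mathcal{P}}:\widetilde{H}^{s}(\Omega)\to H^{s+2}(\Omega)$ (respectively $H^{s}(\Omega)\to H^{s+2}(\Omega)$ for $s>-1/2$) with the order-one loss of differentiation --- is sound and self-contained; it is essentially how such constant-coefficient facts are established in the literature on which the cited theorem ultimately rests. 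What it buys is independence from Theorem \ref{ch2thmUR:theo}; what it costs is invoking the Newtonian-potential mapping property, which this paper never states and which you must import from elsewhere (e.g. \cite{mikhailov1}). One cosmetic slip: the isotropic part of $\mathring{u}_{j}^{k}$ is $-\delta_{j}^{k}/(8\pi\vert\bs{x}-\bs{y}\vert)=\tfrac{1}{2}\delta_{j}^{k}P$, so $\mathring{\mathcal{P}}$ coincides with that part only up to a factor $2$; this does not affect your argument.
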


\begin{theorem}\label{ch2thRcomp}The following operators, with $s>1/2$,
\begin{align*}
&\bs{\mathcal{R}}: \bs{H}^{s}(\Omega) \to \bs{H}^{s}(\Omega),
\qquad &&\mathcal{R}^{\bullet}: \bs{H}^{s}(\Omega) \to \bs{H}^{s-1}(\Omega),\\
&\gamma^{+}\bs{\mathcal{R}}: \bs{H}^{s}(\Omega) \to \bs{H}^{s-1/2}(\partial \Omega),\qquad
&& \bs{T}^{\pm}(\mathcal{R}^{\bullet},\bs{\mathcal{R}}): \bs{H}^{1,0}(\Omega;\bs{\mathcal{A}}) \to \bs{H}^{-1/2}(\partial \Omega) 
\end{align*}
 are compact.
\end{theorem}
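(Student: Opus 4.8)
\noindent The plan is to realise each of the four operators as a continuous map that \emph{gains} Sobolev regularity, composed with a compact embedding; compactness then follows from the elementary fact that the composition of a bounded operator with a compact one is compact. The two embeddings I will use are the Rellich-type compact inclusions $\bs{H}^{t}(\Omega)\hookrightarrow\bs{H}^{\tau}(\Omega)$ for $t>\tau$ (valid on the bounded $\mathcal{C}^{1}$ domain $\Omega$) and $\bs{H}^{t}(\partial\Omega)\hookrightarrow\bs{H}^{\tau}(\partial\Omega)$ for $t>\tau$ (valid on the compact boundary $S$).

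For the first operator I would combine the continuity of $\bs{\mathcal{R}}:\bs{H}^{s}(\Omega)\to\bs{H}^{s+1}(\Omega)$ from \eqref{ch2OpC4} (applicable since $s>1/2>-1/2$) with the compact inclusion $\bs{H}^{s+1}(\Omega)\hookrightarrow\bs{H}^{s}(\Omega)$, which exhibits $\bs{\mathcal{R}}:\bs{H}^{s}(\Omega)\to\bs{H}^{s}(\Omega)$ as compact. The second operator is handled identically, using the continuity $\mathcal{R}^{\bullet}:\bs{H}^{s}(\Omega)\to H^{s}(\Omega)$ of \eqref{ch2OpC8b} followed by the compact inclusion $H^{s}(\Omega)\hookrightarrow H^{s-1}(\Omega)$. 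For the third operator I would chain the smoothing $\bs{\mathcal{R}}:\bs{H}^{s}(\Omega)\to\bs{H}^{s+1}(\Omega)$ with the continuity of the trace $\gamma^{+}:\bs{H}^{s+1}(\Omega)\to\bs{H}^{s+1/2}(\partial\Omega)$ and conclude with the compact inclusion $\bs{H}^{s+1/2}(\partial\Omega)\hookrightarrow\bs{H}^{s-1/2}(\partial\Omega)$.

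The last operator is the genuine obstacle, and here the continuity of the canonical traction $\bs{T}^{\pm}:\bs{H}^{1,0}(\Omega;\bs{\mathcal{A}})\to\bs{H}^{-1/2}(\partial\Omega)$ is by itself useless, since it provides no gain in smoothness. The gain must instead be extracted from the remainder potentials before the traction is applied. Given $(p,\bs{v})\in\bs{H}^{1,0}(\Omega;\bs{\mathcal{A}})$, so that $\bs{v}\in\bs{H}^{1}(\Omega)$, I would apply \eqref{ch2OpC4} and \eqref{ch2OpC8b} with $s=1$ to obtain that the pair $(\mathcal{R}^{\bullet}\bs{v},\bs{\mathcal{R}}\bs{v})$ lies in the strictly smoother space $H^{1}(\Omega)\times\bs{H}^{2}(\Omega)$. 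This regularity exceeds the threshold $s>3/2$ required for the classical traction, so on this pair the canonical traction coincides with $\gamma^{+}\!\big[\sigma_{ij}(\mathcal{R}^{\bullet}\bs{v},\bs{\mathcal{R}}\bs{v})\big]n_{j}$ and maps continuously into $\bs{H}^{1/2}(\partial\Omega)$; compactness then follows from the compact inclusion $\bs{H}^{1/2}(\partial\Omega)\hookrightarrow\bs{H}^{-1/2}(\partial\Omega)$.

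The step I expect to demand the most care is precisely the boundary regularity of this last traction. One must verify that $\sigma_{ij}(\mathcal{R}^{\bullet}\bs{v},\bs{\mathcal{R}}\bs{v})\in H^{1}(\Omega)$, which uses $\mu\in\mathcal{C}^{1}(\Omega)$ to keep the products of $\mu$ with the first derivatives of $\bs{\mathcal{R}}\bs{v}$ inside $H^{1}(\Omega)$, and then that taking the trace and contracting with the unit normal still lands in $\bs{H}^{1/2}(\partial\Omega)$. Because $S\in\mathcal{C}^{1}$ the normal is merely continuous, so the multiplication by $n_{j}$ should not be read pointwise; I would instead route this through the weak definition of the canonical traction, or equivalently through the relation \eqref{ch2relationTV} together with the known mapping properties of the constant-coefficient operators, which is exactly where this argument departs from the purely embedding-based reasoning used for the first three operators.
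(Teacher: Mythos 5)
You should first be aware that the paper itself contains no proof of this statement: Theorem \ref{ch2thRcomp} is one of the results quoted for reference from \cite[Section 4]{carlosstokes}, so the only meaningful comparison is with the standard argument used there, which is exactly the strategy you adopt --- compose the order-raising mapping properties of Theorem \ref{ch2thmUR:theo} with Rellich-type compact embeddings. For the first three operators your argument is correct and complete: \eqref{ch2OpC4} and \eqref{ch2OpC8b} give the gain of one order in $\Omega$, the trace theorem transfers it to $S$, and the compact inclusions finish the job. (A pedantic caveat: for large $s$ the intermediate space $\bs{H}^{s+1/2}(\partial \Omega)$ is not even defined on a merely $\mathcal{C}^{1}$ boundary, but the theorem as stated in the paper suffers from the same defect; if one wishes, one can interpose the compact embedding $\bs{H}^{s+1}(\Omega)\hookrightarrow \bs{H}^{t}(\Omega)$ with $1/2<t<\min(s+1,3/2)$ \emph{before} taking the trace, which keeps all boundary spaces in the admissible range.)

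The genuine loose end is exactly where you flag it, and neither of your two proposed escapes closes it as stated: the claim that the traction lands continuously in $\bs{H}^{1/2}(\partial \Omega)$ would require $n_{j}$ to be a multiplier on $H^{1/2}(\partial \Omega)$, which is not guaranteed for an only-continuous normal, while relation \eqref{ch2relationTV} concerns the surface potential $\bs{\mathcal{W}}'$ and not the traction of the volume potentials $(\mathcal{R}^{\bullet},\bs{\mathcal{R}})$, so it is not the right tool here. The fix, however, is one line and stays inside your framework: compactness into $\bs{H}^{-1/2}(\partial \Omega)$ does not need the target $\bs{H}^{1/2}(\partial \Omega)$, only \emph{some} space compactly embedded into $\bs{H}^{-1/2}(\partial \Omega)$. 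By \eqref{ch2OpC4} and \eqref{ch2OpC8b} with $s=1$, the pair $(\mathcal{R}^{\bullet}\bs{v},\bs{\mathcal{R}}\bs{v})$ lies in $H^{1}(\Omega)\times \bs{H}^{2}(\Omega)$ and, being in $\bs{H}^{1,0}(\Omega;\bs{\mathcal{A}})$ by \eqref{ch2H10RR}, its canonical traction coincides with the classical expression \eqref{ch2Tcl}. Each entry $\gamma^{+}\sigma_{ij}(\mathcal{R}^{\bullet}\bs{v},\bs{\mathcal{R}}\bs{v})$ then belongs to $H^{1/2}(\partial \Omega)\subset L^{2}(\partial \Omega)$ with norm controlled by $\Vert \bs{v}\Vert_{\bs{H}^{1}(\Omega)}$, and multiplication by $n_{j}\in C^{0}(S)\subset L^{\infty}(S)$ is bounded on $L^{2}(\partial \Omega)$. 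Hence $\bs{T}^{\pm}(\mathcal{R}^{\bullet},\bs{\mathcal{R}})$ maps $\bs{H}^{1,0}(\Omega;\bs{\mathcal{A}})$ continuously into $\bs{L}^{2}(\partial \Omega)$, and the embedding $\bs{L}^{2}(\partial \Omega)\hookrightarrow \bs{H}^{-1/2}(\partial \Omega)$ on the compact surface $S$ is compact. With this substitution your proof is complete and is, in substance, the argument of the cited reference.
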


%


\begin{theorem}\label{ch2T3} Let $s\in \mathbb{R}$. The following operators are continuous
\begin{align*}
\bs{V}&: \bs{H}^{s}(\partial \Omega) \to \bs{H}^{s+\frac{3}{2}}(\Omega),
&\bs{W}&: \bs{H}^{s}(\partial \Omega) \to \bs{H}^{s+1/2}(\Omega),\\
\bs{\mathcal{V}}&: \bs{H}^{s}(\partial \Omega) \to \bs{H}^{s+1}(\partial \Omega),
&\bs{\mathcal{W}}&: \bs{H}^{s}(\partial \Omega) \to \bs{H}^{s+1}(\partial \Omega),\\
\bs{\mathcal{L}}^{\pm}&:\bs{H}^{s}(\partial \Omega) \to \bs{H}^{s-1}(\partial \Omega),&
\bs{\mathcal{W}}'&: \bs{H}^{s}(\partial \Omega) \to \bs{H}^{s+1}(\partial \Omega).\hspace{0.5em}
\end{align*}
\end{theorem}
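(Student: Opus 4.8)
The plan is to reduce every statement to the corresponding mapping property for the constant-coefficient ($\mu\equiv 1$) Stokes potentials, which are classical and may be found in \cite{steinbach, hsiao}, and then transfer these back to the variable-coefficient operators through the representation formulas \eqref{ch2relationVW}--\eqref{ch2relationL}. Concretely, each of the six operators is written as a composition of a constant-coefficient potential (carrying the correct smoothing order) pre- and/or post-composed with multiplication by the scalar fields $\mu$, $1/\mu$, or $\partial_i\mu/\mu$. The strategy therefore rests on two independent ingredients: (i) the known continuity of the ringed operators $\mathring{\bs V},\mathring{\bs W},\mathring{\bs{\mathcal V}},\mathring{\bs{\mathcal W}},\mathring{\bs{\mathcal W}}',\mathring{\bs{\mathcal L}}^{\pm}$ on the stated scales, and (ii) the boundedness of the multiplication operators on the relevant Sobolev spaces.

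First I would record the constant-coefficient mapping properties, which exhibit exactly the smoothing orders claimed: $\mathring{\bs V}$ gains $3/2$ derivatives into the domain, $\mathring{\bs W}$ gains $1/2$, the direct boundary values $\mathring{\bs{\mathcal V}},\mathring{\bs{\mathcal W}},\mathring{\bs{\mathcal W}}'$ gain one derivative on $S$, and the hypersingular traction $\mathring{\bs{\mathcal L}}^{\pm}$ loses one derivative. Next I would establish the multiplier lemma: since $\mu\in\mathcal C^1(\overline\Omega)$ with $\mu>c>0$, multiplication by $\mu$ and by $1/\mu$ is bounded on $\bs H^s(\partial\Omega)$ and on $\bs H^s(\Omega)$, and multiplication by $\partial_i\mu/\mu$ is bounded on $\bs H^s(\partial\Omega)$, for $s$ in the range permitted by the smoothness of $\mu$; under the standing assumption that $\mu$ and $S$ are smooth enough this holds for all $s\in\mathbb R$.

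With these two ingredients, each operator is handled by composition. For $\bs V$ and $\bs{\mathcal V}$ the factor $1/\mu(\bs y)$ multiplies on the left, so $\bs V=\tfrac1\mu\mathring{\bs V}$ and $\bs{\mathcal V}=\tfrac1\mu\mathring{\bs{\mathcal V}}$ inherit continuity directly. For $\bs W$, $\bs{\mathcal W}$ and $\bs{\mathcal L}^{\pm}$ there is in addition a multiplication by $\mu$ on the right of the density; using $\bs W=\tfrac1\mu\mathring{\bs W}(\mu\,\cdot)$, $\bs{\mathcal W}=\tfrac1\mu\mathring{\bs{\mathcal W}}(\mu\,\cdot)$ and $\widehat{\mathcal L}_k=\mathring{\mathcal L}_k(\mu\,\cdot)$, continuity follows from the chain multiplication $\to$ constant-coefficient potential $\to$ multiplication. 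Finally $\bs{\mathcal W}'$ is handled through \eqref{ch2relationTV}: the leading term $\mathring{\bs{\mathcal W}}'$ is continuous with a gain of one derivative, while each correction term has the form (multiplier $\partial_i\mu/\mu$)$\cdot\mathring{\bs{\mathcal V}}$, which first gains one derivative and then loses nothing, so all summands land in the target space $\bs H^{s+1}(\partial\Omega)$.

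The main obstacle I anticipate is twofold, and both points concentrate on the hypersingular and adjoint operators $\bs{\mathcal L}^{\pm}$ and $\bs{\mathcal W}'$. First, one must make sense of $\bs{\mathcal L}^{\pm}$ as the canonical traction $\bs T^{\pm}(\Pi\bs\rho,\bs W\bs\rho)$ rather than as a naive singular integral; this requires verifying that $(\Pi\bs\rho,\bs W\bs\rho)$ lands in $\bs H^{1,0}(\Omega^{\pm};\bs{\mathcal A})$ so that the weak traction of the Preliminaries applies, and then transferring the definition through \eqref{ch2relationL}. Second, the correction terms in \eqref{ch2relationTV} carry the merely continuous coefficients $\partial_i\mu/\mu$ and the normal $n_i$, so reaching the full range $s\in\mathbb R$ genuinely requires higher regularity of $\mu$ and of $S$; for a $\mathcal C^1$ coefficient the composition argument yields only a bounded range of $s$, and I would state the smoothness hypotheses explicitly at this step rather than suppress them.
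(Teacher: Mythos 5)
Your proposal follows exactly the route the paper intends: Theorem \ref{ch2T3} is stated there without a written proof, being justified by the remark that it follows from the potential relations \eqref{ch2relationU}--\eqref{ch2relationTV} (and \eqref{ch2relationL}) together with the known constant-coefficient mapping properties, with the details delegated to \cite[Section~4]{carlosstokes} --- which is precisely your reduction-plus-multiplier argument. Your closing caveat about the smoothness of $\mu$ and $S$ needed to reach all $s\in\mathbb{R}$ is a fair and more careful reading than the paper's own statement, but it does not change the substance of the argument.
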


\begin{theorem}\label{ch2TP3} The following pressure surface potential operators are continuous
\begin{align}
\Pi^{s}&:\bs{H}^{s-\frac{3}{2}}(\partial \Omega) \to H^{s-1}(\Omega), \hspace{0.5em}s\in \mathbb{R},\label{ch2OpC5}\\
\Pi^{d}&: \bs{H}^{s-1/2}(\partial \Omega) \to H^{s-1}(\Omega),\hspace{0.5em}s\in \mathbb{R}.\label{ch2OpC6}
\end{align}
\end{theorem}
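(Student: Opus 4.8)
The plan is to reduce both pressure surface potentials to the classical harmonic single- and double-layer potentials, whose mapping properties are standard (see \cite{mclean, costabel}), and then to account for the loss of one derivative and, in the double-layer case, for the multiplication by $\mu$. The key observation is that the pressure kernel $\mathring{q}^{j}$ is, up to sign, a $\boldsymbol{y}$-derivative of the Newtonian kernel $E(\boldsymbol{x},\boldsymbol{y}):=\tfrac{1}{4\pi|\boldsymbol{x}-\boldsymbol{y}|}$: a direct computation gives
\begin{equation*}
\frac{\partial}{\partial y_{j}}E(\boldsymbol{x},\boldsymbol{y})=\frac{x_{j}-y_{j}}{4\pi|\boldsymbol{x}-\boldsymbol{y}|^{3}}=\mathring{q}^{j}(\boldsymbol{x},\boldsymbol{y}).
\end{equation*}
By the potential relations \eqref{ch2relationP} it suffices to treat the constant-coefficient operators, since $\Pi^{s}=\mathring{\Pi}^{s}$ and $\Pi^{d}\boldsymbol{\rho}=\mathring{\Pi}^{d}(\mu\boldsymbol{\rho})$; thus $\mu$ enters only through a multiplication on the boundary.

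For $\boldsymbol{y}\in\Omega$ (hence $\boldsymbol{y}\notin S$) the kernels are smooth in $\boldsymbol{y}$, so differentiation under the integral sign is legitimate. Writing $V_{\Delta}$ and $W_{\Delta}$ for the harmonic single- and double-layer potentials with kernels $E$ and $\partial E/\partial n(\boldsymbol{x})$ respectively, I would establish the operator identities
\begin{align*}
\Pi^{s}\boldsymbol{\rho}(\boldsymbol{y})&=\int_{S}\frac{\partial E}{\partial y_{j}}\,\rho_{j}\, dS(\boldsymbol{x})=\frac{\partial}{\partial y_{j}}\bigl[V_{\Delta}\rho_{j}\bigr](\boldsymbol{y}),\\
\Pi^{d}\boldsymbol{\rho}(\boldsymbol{y})&=-2\int_{S}\frac{\partial}{\partial y_{j}}\frac{\partial E}{\partial n(\boldsymbol{x})}\,\mu\rho_{j}\, dS(\boldsymbol{x})=-2\frac{\partial}{\partial y_{j}}\bigl[W_{\Delta}(\mu\rho_{j})\bigr](\boldsymbol{y}),
\end{align*}
where in the second line I also use $\partial_{x_{i}}\mathring{q}^{j}=\partial_{y_{j}}\partial_{x_{i}}E$ to commute the $\boldsymbol{y}$-derivative through the normal derivative in $\boldsymbol{x}$. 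These identities reduce the two pressure potentials to a single harmonic layer potential followed by a first-order differentiation (with a weight $\mu$ on the density in the double-layer case).

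It then remains to compose three continuity statements: the standard harmonic mapping properties $V_{\Delta}:H^{s-3/2}(\partial\Omega)\to H^{s}(\Omega)$ and $W_{\Delta}:H^{s-1/2}(\partial\Omega)\to H^{s}(\Omega)$ from \cite{mclean, costabel} (applied componentwise in $j$); the continuity of $\partial/\partial y_{j}:H^{s}(\Omega)\to H^{s-1}(\Omega)$ for every $s\in\mathbb{R}$; and, for $\Pi^{d}$, the boundedness of multiplication by $\mu$ on $\bs{H}^{s-1/2}(\partial\Omega)$. Composing the first two immediately yields \eqref{ch2OpC5} for all $s\in\mathbb{R}$, and likewise $\mathring{\Pi}^{d}:\bs{H}^{s-1/2}(\partial\Omega)\to H^{s-1}(\Omega)$. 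I expect the only delicate point to be the multiplication by $\mu$ in $\Pi^{d}=\mathring{\Pi}^{d}(\mu\,\cdot\,)$: since $\mu\in\mathcal{C}^{1}$, pointwise multiplication is bounded on $H^{t}(\partial\Omega)$ only for $|t|\le 1$, so \eqref{ch2OpC6} in the full range $s\in\mathbb{R}$ requires the trace of $\mu$ on $S$ to be correspondingly smooth (e.g.\ $\mu\in\mathcal{C}^{\infty}$) or a restriction on $s$; the remaining analytic content lies entirely in the reduction above. Finally, the pointwise operator identities extend from smooth densities to the stated Sobolev spaces by density together with the continuity just established.
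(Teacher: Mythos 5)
Your proposal cannot be matched line-by-line against the paper's argument, because the paper gives no proof of Theorem \ref{ch2TP3}: it is stated for reference only, with the proof delegated to \cite[Section 4]{carlosstokes}. What you have written is a correct, self-contained reconstruction of precisely that argument. The kernel identity $\mathring{q}^{j}(\boldsymbol{x},\boldsymbol{y})=\partial_{y_{j}}\bigl(4\pi\vert\boldsymbol{x}-\boldsymbol{y}\vert\bigr)^{-1}$ is right; since the kernels are smooth for $\boldsymbol{y}\in\Omega$, $\boldsymbol{x}\in S$, the $\boldsymbol{y}$-derivative commutes with the normal derivative in $\boldsymbol{x}$, so $\Pi^{s}\boldsymbol{\rho}=\partial_{y_{j}}V_{\Delta}\rho_{j}$ and $\Pi^{d}\boldsymbol{\rho}=-2\,\partial_{y_{j}}W_{\Delta}(\mu\rho_{j})$ in $\Omega$; and composing the classical harmonic mapping properties $V_{\Delta}:H^{s-3/2}(\partial\Omega)\to H^{s}(\Omega)$ and $W_{\Delta}:H^{s-1/2}(\partial\Omega)\to H^{s}(\Omega)$ with the continuity of $\partial_{j}:H^{s}(\Omega)\to H^{s-1}(\Omega)$ yields exactly \eqref{ch2OpC5}--\eqref{ch2OpC6}. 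This is in substance the same reduction as the cited source, which also rests on the relations \eqref{ch2relationP} together with the constant-coefficient pressure potentials; your version simply makes explicit that those constant-coefficient properties follow from Laplace layer-potential theory rather than quoting Stokes-specific literature, which is a perfectly good (arguably more elementary) way to close the argument. Your closing caveat is also well taken, and it is a limitation of the theorem as stated rather than a defect of your proof: with only $\mu\in\mathcal{C}^{1}$, multiplication by $\mu$ is bounded on $H^{s-1/2}(\partial\Omega)$ only for $\vert s-1/2\vert\le 1$, so \eqref{ch2OpC6} for all $s\in\mathbb{R}$ tacitly requires a smoother coefficient; the same remark applies to the boundary, since the full range $s\in\mathbb{R}$ for $V_{\Delta}$ and $W_{\Delta}$ needs $S$ smoother than the $\mathcal{C}^{1}$ assumed in Section 2. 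Both restrictions are inherited from the cited reference, where the coefficient and the boundary carry the extra smoothness needed for the unrestricted range of $s$.
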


\begin{theorem}\label{compW} Let $s\in \mathbb{R}$. Then, the operator 
\begin{align*}
\bs{\mathcal{W}}^{'}-\mathring{\bs{\mathcal{W}}^{'}}:\bs{H}^{s}(\partial \Omega) \longrightarrow \bs{H}^{s}(\partial \Omega),
\end{align*}
is compact. 
\end{theorem}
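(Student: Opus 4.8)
The plan is to read the difference operator directly off the potential relation \eqref{ch2relationTV}, which already expresses $\mathcal{W}'_k$ as $\mathring{\mathcal{W}}'_k$ plus explicit lower-order corrections. Subtracting gives the pointwise identity
\[
(\mathcal{W}'_k - \mathring{\mathcal{W}}'_k)\boldsymbol{\rho}
= -\left(\frac{\partial_i\mu}{\mu}\,\mathring{\mathcal{V}}_k\boldsymbol{\rho}
+ \frac{\partial_k\mu}{\mu}\,\mathring{\mathcal{V}}_i\boldsymbol{\rho}
- \frac{2}{3}\delta_i^k\frac{\partial_j\mu}{\mu}\,\mathring{\mathcal{V}}_j\boldsymbol{\rho}\right)n_i ,
\]
so that the whole difference is a finite sum of terms of the form (bounded coefficient) $\times$ (a component of the constant-coefficient single-layer velocity potential $\mathring{\bs{\mathcal{V}}}$ applied to $\boldsymbol{\rho}$). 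Since a finite sum of compact operators is compact, it suffices to prove compactness of a single such term.

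The analytic engine is a gain of one order of smoothness followed by a compact embedding. By Theorem \ref{ch2T3} together with the relation \eqref{ch2relationcalVW}, the constant-coefficient single-layer operator $\mathring{\bs{\mathcal{V}}}$ is continuous from $\bs{H}^s(\partial \Omega)$ into $\bs{H}^{s+1}(\partial \Omega)$. The coefficients $\partial_i\mu/\mu$ are continuous and bounded (as $\mu\in\mathcal{C}^1(\Omega)$ and $\mu>c>0$), and the normal components $n_i$ are continuous and bounded (as $S\in\mathcal{C}^1$), so multiplication by the factor in front of each $\mathring{\mathcal{V}}_\bullet\boldsymbol{\rho}$ is a continuous operator on the relevant Sobolev space. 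Composing the one-order smoothing of $\mathring{\bs{\mathcal{V}}}$ with the compact Rellich embedding $\bs{H}^{s+1}(\partial \Omega)\hookrightarrow \bs{H}^{s}(\partial \Omega)$ on the compact manifold $\partial \Omega$, each term factors as continuous $\circ$ compact $\circ$ continuous and is therefore compact; summing yields the claim.

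The main obstacle is the bookkeeping of the multiplication operators by $\partial_i\mu/\mu$ and $n_i$, whose regularity is only $\mathcal{C}^0$. Multiplication by a merely continuous function is automatically bounded on $L^2=\bs{H}^0$, but to guarantee boundedness at the order $t$ appearing in the factorization one must either position the coefficient multiplication at a stage acting on a space for which $\mathcal{C}^0$ (resp. $\mathcal{C}^1$) multipliers are admissible, or strengthen the smoothness of $\mu$ and $S$. I would therefore arrange the composition so that the coefficient multiplication always acts where $\mathcal{C}^0$ multipliers are permissible, using the spare derivative produced by $\mathring{\bs{\mathcal{V}}}$ as the slack that feeds the compact embedding. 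The only genuinely nontrivial inputs are the mapping property of $\mathring{\bs{\mathcal{V}}}$ and Rellich's theorem; the remainder is the algebra of \eqref{ch2relationTV}.
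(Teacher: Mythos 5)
Your overall strategy (gain one order of smoothness, then apply the Rellich embedding) is the same as the paper's, but your implementation through the relation \eqref{ch2relationTV} introduces a step that does not hold under the stated hypotheses, and it is precisely the step you flagged yourself without closing it. The decomposition forces you to insert multiplication operators by $(\partial_i\mu/\mu)\,n_i$, which are merely continuous functions on $S$ (the paper only assumes $\mu\in\mathcal{C}^1(\Omega)$ and $S\in\mathcal{C}^1$). Multiplication by a $\mathcal{C}^0$ function is bounded on $H^0=L^2(\partial\Omega)$ but on no other $H^t(\partial\Omega)$ in general: a continuous function need not belong to $H^{t}(\partial\Omega)$ for \emph{any} $t>0$ (lacunary Fourier series give continuous functions outside every positive-order Sobolev space), so the product of a $\mathcal{C}^0$ multiplier with an $H^{s+1}$ function need not lie in $H^{s}$ when $s>0$. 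Consequently your proposed repair --- ``arrange the composition so that the coefficient multiplication acts where $\mathcal{C}^0$ multipliers are permissible'' --- cannot work for all $s\in\mathbb{R}$: wherever you place the multiplication in the chain $H^s\to\cdots\to H^s$, its output is controlled only in $L^2$, and $L^2\hookrightarrow H^s$ fails for $s>0$. Your factorization genuinely proves compactness only in roughly the range $-1<s\le 0$, not for arbitrary $s$ as the theorem requires.

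The gap disappears if you drop the decomposition \eqref{ch2relationTV} altogether, which is exactly what the paper does. Theorem \ref{ch2T3} asserts that $\bs{\mathcal{W}}':\bs{H}^{s}(\partial\Omega)\to\bs{H}^{s+1}(\partial\Omega)$ is continuous for every $s\in\mathbb{R}$, and $\mathring{\bs{\mathcal{W}}}'$ is the instance $\mu\equiv 1$ of the same operator, so it enjoys the same mapping property; hence the difference $\bs{\mathcal{W}}'-\mathring{\bs{\mathcal{W}}}'$ maps $\bs{H}^{s}(\partial\Omega)$ continuously into $\bs{H}^{s+1}(\partial\Omega)$ with no multiplier analysis at all, and composing with the compact embedding $\bs{H}^{s+1}(\partial\Omega)\hookrightarrow\bs{H}^{s}(\partial\Omega)$ finishes the proof. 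In short: use the quoted $H^s\to H^{s+1}$ property twice (once for each operator) rather than trying to rebuild the difference from $\mathring{\bs{\mathcal{V}}}$ and coefficient multiplications; the latter route demands smoothness of $\mu$ and $S$ that the paper does not assume.
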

\begin{proof}
Let $\bs{g}\in \bs{H}^{s}(\partial \Omega)$, then $\bs{\mathcal{W}}^{'}\bs{g}\in\bs{H}^{s+1}(\partial \Omega)$. Similarly, \mbox{$\mathring{\bs{\mathcal{W}}^{'}}\bs{g}\in\bs{H}^{s+1}(\partial \Omega)$} since $\mathring{\bs{\mathcal{W}}^{'}}:=\bs{\mathcal{W}}^{'}\vert_{\mu =1}$. Therefore, $\bs{\mathcal{W}}^{'}\bs{g}-\mathring{\bs{\mathcal{W}}^{'}}\bs{g} \in \bs{H}^{s+1}(\partial \Omega)$. From the Rellich compactness embedding theorem \cite[Theorem 3.27]{mclean}, the embedding $\bs{H}^{s+1}(\partial \Omega)\subseteq \bs{H}^{s}(\partial \Omega)$ is compact what completes the proof. 
\end{proof}

\begin{theorem}\label{ch2H10maps}
The following operators are continuous
\begin{align}
(	\Pi^{s},\bs{V}):\,\,&\bs{H}^{-1/2}(\partial \Omega)\longrightarrow \bs{H}^{1,0}(\Omega;\bs{\mathcal{A}}),\label{ch2H10PV}\\
(	\Pi^{d}, \bs{W}):\,\,&\bs{H}^{1/2}(\partial \Omega)\longrightarrow \bs{H}^{1,0}(\Omega;\bs{\mathcal{A}}),\label{ch2H10PiW}\\
(\mathcal{Q},\bs{\mathcal{U}}):\,\,&\bs{L}^{2}(\Omega)\longrightarrow \bs{H}^{1,0}(\Omega;\bs{\mathcal{A}}),\label{ch2H10QU}\\
(\mathcal{R}^{\bullet}, \bs{\mathcal{R}}):\,\,&\bs{H}^{1}(\Omega)\longrightarrow \bs{H}^{1,0}(\Omega;\bs{\mathcal{A}}),\label{ch2H10RR}\\
(\dfrac{4\mu}{3}I,\bs{Q}):\,\,&L^{2}(\Omega)\longrightarrow \bs{H}^{1,0}(\Omega;\bs{\mathcal{A}})\label{ch2H10IQ}.
\end{align}
\end{theorem}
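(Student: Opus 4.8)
The plan is to verify, for each of the five operator pairs listed in \eqref{ch2H10PV}--\eqref{ch2H10IQ}, the three conditions that characterise membership in $\bs{H}^{1,0}(\Omega;\bs{\mathcal{A}})$: that the pressure component lies in $L^{2}(\Omega)$, that the velocity component lies in $\bs{H}^{1}(\Omega)$, and -- the essential point -- that the (weakly defined) operator $\bs{\mathcal{A}}$ applied to the pair is represented by an $\bs{L}^{2}(\Omega)$ function, all with continuous dependence on the density. The first two conditions are immediate from the continuity statements already available. For \eqref{ch2H10QU} one takes $s=0$ in \eqref{ch2OpC2} and \eqref{ch2OpC7b}, so that $\bs{\mathcal{U}}:\bs{L}^{2}(\Omega)\to\bs{H}^{2}(\Omega)\hookrightarrow\bs{H}^{1}(\Omega)$ and $\mathcal{Q}:\bs{L}^{2}(\Omega)\to H^{1}(\Omega)\hookrightarrow L^{2}(\Omega)$; for \eqref{ch2H10RR} one uses \eqref{ch2OpC4} and \eqref{ch2OpC8b} with $s=1$; for \eqref{ch2H10PV} and \eqref{ch2H10PiW} the velocity bounds follow from Theorem \ref{ch2T3} (with $s=-1/2$ and $s=1/2$ respectively) and the pressure bounds from Theorem \ref{ch2TP3}, taking $s=1$ in \eqref{ch2OpC5} and \eqref{ch2OpC6}; and for \eqref{ch2H10IQ} the pressure component $\tfrac{4}{3}\mu\rho$ lies in $L^{2}(\Omega)$ because $\mu\in L^{\infty}(\Omega)$, while the velocity bound follows from relation \eqref{ch2relationQ2} together with Corollary \ref{ch2cormpQ}.

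The real work is the third condition, and my strategy is to compute $\bs{\mathcal{A}}$ of each pair by passing to the constant-coefficient potentials through the relations \eqref{ch2relationU}--\eqref{ch2relationL} and then reading off $\bs{L}^{2}$-membership from the mapping theorems. The Newton-type volume pair \eqref{ch2H10QU} is cleanest: the defining parametrix property \eqref{ch2param}, together with the expression for the remainder kernel $R_{kj}$, yields the third-Green-type identity $\bs{\mathcal{A}}(\mathcal{Q}\bs{\rho},\bs{\mathcal{U}}\bs{\rho})=\bs{\rho}+\bs{\mathcal{R}}\bs{\rho}$. Since $\bs{\rho}\in\bs{L}^{2}(\Omega)$ and, by \eqref{ch2OpC4}, $\bs{\mathcal{R}}:\bs{L}^{2}(\Omega)\to\bs{H}^{1}(\Omega)\hookrightarrow\bs{L}^{2}(\Omega)$, the right-hand side is in $\bs{L}^{2}(\Omega)$ with the required bound. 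The pair \eqref{ch2H10RR} is treated the same way after substituting \eqref{ch2relationR} and \eqref{ch2relationRdot}, which rewrite $\bs{\mathcal{R}}$ and $\mathcal{R}^{\bullet}$ through the constant-coefficient operators $\mathring{\bs{\mathcal{U}}}$ and $\mathring{\mathcal{Q}}$ acting on densities of the form $\rho_{j}\partial_{i}\mu\in\bs{L}^{2}(\Omega)$; applying $\bs{\mathcal{A}}$ then produces density-plus-remainder terms controlled by Theorem \ref{ch2thmUR:theo} and Corollary \ref{ch2cormpQ}. The pair \eqref{ch2H10IQ} follows by the same substitution of \eqref{ch2relationQ2}.

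For the layer-potential pairs \eqref{ch2H10PV} and \eqref{ch2H10PiW} the mechanism is different. The underlying constant-coefficient pairs $(\mathring{\Pi^{s}}\bs{\rho},\mathring{\bs{V}}\bs{\rho})$ and $(\mathring{\Pi^{d}}\bs{\rho},\mathring{\bs{W}}\bs{\rho})$ solve the homogeneous constant-coefficient Stokes system and are divergence free in $\Omega$. Substituting the relations \eqref{ch2relationVW} and \eqref{ch2relationP} and expanding $\bs{\mathcal{A}}$, the second-order part of the computation reduces exactly to $\mathring{\bs{\mathcal{A}}}$ of these pairs, which vanishes, and the constraint $\div\mathring{\bs{V}}\bs{\rho}=0$ eliminates a further term. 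What remains is a first-order expression in the interior-smooth constant-coefficient potentials with coefficients built from $\mu$ and $\nabla\mu$; since these potentials are in $\bs{H}^{1}(\Omega)$ by Theorem \ref{ch2T3} and $\mu\in\mathcal{C}^{1}$, the surviving terms lie in $\bs{L}^{2}(\Omega)$, continuously in $\bs{\rho}$.

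The main obstacle is precisely this last cancellation. Because the layer-potential velocities are only $\bs{H}^{1}(\Omega)$-regular up to $S$, a naive application of the second-order operator $\bs{\mathcal{A}}$ would land merely in $\bs{H}^{-1}(\Omega)$; one must therefore verify carefully that the highest-order contributions cancel through the homogeneous constant-coefficient equations and the divergence-free property, leaving only terms that Theorems \ref{ch2T3} and \ref{ch2TP3} can absorb into $\bs{L}^{2}(\Omega)$. Rigorously this is best carried out with the weak definition of $\bs{\mathcal{A}}$ through the form $\mathcal{E}$: one shows that $\bs{u}\mapsto-\mathcal{E}((p,\bs{v}),\bs{u})$ is bounded on $\widetilde{\bs{H}}^{1}(\Omega)$ by $\|\bs{u}\|_{\bs{L}^{2}(\Omega)}$, so that the functional is represented by an $\bs{L}^{2}(\Omega)$ function, the integration by parts that shifts a derivative off $\bs{u}$ being justified by the interior equations satisfied by the potentials. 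Keeping track of exactly which derivatives of $\mu$ appear in this step -- and hence the regularity of $\mu$ that is genuinely required -- is the delicate bookkeeping on which the argument rests; once it is complete, collecting the three component estimates gives the claimed continuity into $\bs{H}^{1,0}(\Omega;\bs{\mathcal{A}})$ for each of the five pairs.
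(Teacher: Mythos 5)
The paper itself gives no proof of Theorem \ref{ch2H10maps} (it defers to \cite[Section 4]{carlosstokes}), so your proposal has to stand on its own; its overall architecture (check the three components of the $\bs{H}^{1,0}$-norm, pass to the constant-coefficient potentials, and for \eqref{ch2H10PV}, \eqref{ch2H10PiW} use that the constant-coefficient pairs solve the homogeneous Stokes system and are divergence free, so only first-order terms with $\nabla\mu$-coefficients survive) is the right one and matches the cited reference. However, there are two concrete problems. First, the identity $\bs{\mathcal{A}}(\mathcal{Q}\bs{\rho},\bs{\mathcal{U}}\bs{\rho})=\bs{\rho}+\bs{\mathcal{R}}\bs{\rho}$ on which you base \eqref{ch2H10QU} is false. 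The parametrix property \eqref{ch2param} holds with $\bs{\mathcal{A}}$ acting in the \emph{integration} variable $\bs{x}$, and that is where the kernel $R_{kj}(\bs{x},\bs{y})$, hence $\bs{\mathcal{R}}$, comes from: in $\bs{\mathcal{R}}$ the gradient $\nabla\mu$ is paired with the density at the integration point (see \eqref{ch2relationR}, where the densities are $\rho_{j}\partial_{i}\mu$). Applying $\bs{\mathcal{A}}$ to the potentials means differentiating in $\bs{y}$, and since the parametrix is not symmetric in $(\bs{x},\bs{y})$ (the coefficient enters as $1/\mu(\bs{y})$ and $\mu(\bs{x})/\mu(\bs{y})$), the result is $\bs{\rho}$ plus a \emph{different} remainder, with $\nabla\mu(\bs{y})$ sitting outside the integrals; taking $\bs{\rho}$ supported where $\nabla\mu$ vanishes makes $\bs{\mathcal{R}}\bs{\rho}\equiv 0$ while those extra terms do not vanish. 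Fortunately \eqref{ch2H10QU} and \eqref{ch2H10RR} need no such identity at all: by \eqref{ch2OpC2}, \eqref{ch2OpC7b}, \eqref{ch2OpC4} and \eqref{ch2OpC8b} the pairs lie in $H^{1}(\Omega)\times\bs{H}^{2}(\Omega)$, and for such pairs $\bs{\mathcal{A}}(p,\bs{v})=\partial_{i}\mu\,\bigl(\partial_{i}v_{j}+\partial_{j}v_{i}-\tfrac{2}{3}\delta_{i}^{j}\div\bs{v}\bigr)+\mu\,\partial_{i}\bigl(\partial_{i}v_{j}+\partial_{j}v_{i}-\tfrac{2}{3}\delta_{i}^{j}\div\bs{v}\bigr)-\partial_{j}p$ is in $\bs{L}^{2}(\Omega)$ term by term.

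The step that actually fails is \eqref{ch2H10IQ}: it cannot ``follow by the same substitution of \eqref{ch2relationQ2}''. For $\rho\in L^{2}(\Omega)$, relation \eqref{ch2relationQ2} and Corollary \ref{ch2cormpQ} give only $\bs{\mathcal{Q}}\rho\in\bs{H}^{1}(\Omega)$, and the pressure component $\tfrac{4\mu}{3}\rho$ is only in $L^{2}(\Omega)$; so, unlike the two volume pairs above, $\bs{\mathcal{A}}$ applied to this pair is a priori only in $\bs{H}^{-1}$, and no mapping theorem absorbs the top-order terms separately. This pair requires precisely the cancellation mechanism you reserve for the layer potentials: writing $\mathring{\bs{\mathcal{Q}}}(\mu\rho)=\mp\nabla N(\mu\rho)$, where $N$ is the harmonic Newtonian potential with $\Delta N(\mu\rho)=\mu\rho$ in $\Omega$, the Laplacian and $\tfrac{1}{3}\nabla\div$ of the velocity component produce distributional terms proportional to $\nabla(\mu\rho)$, and these are cancelled exactly by the gradient of the pressure component $\tfrac{4\mu}{3}\rho$ --- this cancellation is the only reason the factor $4\mu/3$ appears in the statement (and in the right-hand sides \eqref{ch2GP}, \eqref{ch2tildeT}, \eqref{DBDIE4}). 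What survives is again first order in $\bs{\mathcal{Q}}\rho\in\bs{H}^{1}(\Omega)$ with coefficients built from derivatives of $\mu$, hence $\bs{L}^{2}$. Without this computation, assertion \eqref{ch2H10IQ} --- the very one needed for $\widetilde{\bs{T}}^{+}(g,\bs{f})$ to be well defined --- is unproven in your proposal. Finally, your caveat about the regularity of $\mu$ is warranted: in both the layer-potential case and \eqref{ch2H10IQ}, once the derivative is shifted off the test function the surviving terms contain $\partial_{i}(v_{j}\partial_{i}\mu)$-type expressions, i.e.\ second derivatives of $\mu$, so the argument genuinely uses more than the $\mathcal{C}^{1}$ smoothness stated in this paper (the cited reference assumes a smoother coefficient).
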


\begin{theorem}\label{ch2jumps} If $\boldsymbol{\tau}\in \boldsymbol{H}^{1/2}(\partial \Omega)$, $\boldsymbol{\rho}\in \boldsymbol{H}^{-1/2}(\partial \Omega)$, then the following jump relations hold
\begin{align*}
&\gamma^{\pm}\bs{V}\boldsymbol{\rho}=\bs{\mathcal{V}}\boldsymbol{\rho},\qquad
\gamma^{\pm}\bs{W}\boldsymbol{\tau}=\mp\dfrac{1}{2}{\boldsymbol{ \tau }}+\bs{\mathcal{W}}\boldsymbol{\tau}\\
&\bs{T}^{\pm}(\Pi^{s}\boldsymbol{\rho},\boldsymbol{V\rho})=
\pm \dfrac{1}{2}{\boldsymbol{\rho}}+\bs{\mathcal{W'}}\boldsymbol{\rho}.
\end{align*}\end{theorem}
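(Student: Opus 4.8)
The plan is to reduce all three jump relations to the corresponding classical jump relations for the constant-coefficient ($\mu=1$) Stokes potentials, exploiting the factorisations \eqref{ch2relationVW}, \eqref{ch2relationcalVW}, \eqref{ch2relationP} and \eqref{ch2relationTV} together with the fact that $\mu\in\mathcal{C}^1$ (extended to $\mathbb{R}^3$) is continuous across $S$ and therefore contributes no jump. Throughout I would argue first for smooth densities, using the classical pointwise one-sided traces and tractions $T_i=\sigma_{ij}n_j$, which are legitimate because $\bs{V}\boldsymbol{\rho}$, $\bs{W}\boldsymbol{\tau}$ and $\Pi^{s}\boldsymbol{\rho}$ are smooth off $S$, and then extend to $\boldsymbol{\rho}\in\bs{H}^{-1/2}(\partial\Omega)$, $\boldsymbol{\tau}\in\bs{H}^{1/2}(\partial\Omega)$ by density and the continuity of the operators from Theorem~\ref{ch2T3}.

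For the single-layer relation, writing $\bs{V}\boldsymbol{\rho}=\frac{1}{\mu}\mathring{\bs{V}}\boldsymbol{\rho}$ from \eqref{ch2relationVW} and taking traces gives $\gamma^{\pm}\bs{V}\boldsymbol{\rho}=\frac{1}{\mu}\big|_S\,\gamma^{\pm}\mathring{\bs{V}}\boldsymbol{\rho}$, since $1/\mu$ has no jump. The classical continuity of the constant-coefficient single layer, $\gamma^{\pm}\mathring{\bs{V}}\boldsymbol{\rho}=\mathring{\bs{\mathcal{V}}}\boldsymbol{\rho}$, combined with $\bs{\mathcal{V}}\boldsymbol{\rho}=\frac{1}{\mu}\mathring{\bs{\mathcal{V}}}\boldsymbol{\rho}$ from \eqref{ch2relationcalVW}, immediately yields $\gamma^{\pm}\bs{V}\boldsymbol{\rho}=\bs{\mathcal{V}}\boldsymbol{\rho}$. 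For the double layer, \eqref{ch2relationVW} gives $\bs{W}\boldsymbol{\tau}=\frac{1}{\mu}\mathring{\bs{W}}(\mu\boldsymbol{\tau})$; applying the classical jump $\gamma^{\pm}\mathring{\bs{W}}\boldsymbol{\psi}=\mp\frac{1}{2}\boldsymbol{\psi}+\mathring{\bs{\mathcal{W}}}\boldsymbol{\psi}$ with $\boldsymbol{\psi}=\mu\boldsymbol{\tau}$ and dividing by $\mu|_S$ makes the factor $\frac{1}{\mu}\cdot\mu=1$ cancel in the jump term, producing $\mp\frac{1}{2}\boldsymbol{\tau}+\frac{1}{\mu}\mathring{\bs{\mathcal{W}}}(\mu\boldsymbol{\tau})$, which equals $\mp\frac{1}{2}\boldsymbol{\tau}+\bs{\mathcal{W}}\boldsymbol{\tau}$ by \eqref{ch2relationcalVW}.

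The traction relation is the substantive one. I would compute the variable-coefficient compressible stress tensor of the pair $(\Pi^{s}\boldsymbol{\rho},\bs{V}\boldsymbol{\rho})=(\mathring{\Pi^{s}}\boldsymbol{\rho},\frac{1}{\mu}\mathring{\bs{V}}\boldsymbol{\rho})$ directly from $\sigma_{ij}$. Expanding $\partial_j(\frac{1}{\mu}\mathring{V}_i\boldsymbol{\rho})$ by the product rule and using that the constant-coefficient single-layer velocity is divergence free, $\partial_k\mathring{V}_k\boldsymbol{\rho}=0$ (cf.\ the Remark following \eqref{ch2relationL}), the $\mu$-weighted symmetric-gradient together with the compressible $-\frac{2}{3}\delta_i^j\div$ term reorganise so that $\sigma_{ij}(\Pi^{s}\boldsymbol{\rho},\bs{V}\boldsymbol{\rho})$ splits as the constant-coefficient stress $\mathring{\sigma}_{ij}(\mathring{\Pi^{s}}\boldsymbol{\rho},\mathring{\bs{V}}\boldsymbol{\rho})$ plus lower-order terms proportional to $\frac{\partial_i\mu}{\mu}\mathring{V}_j\boldsymbol{\rho}$, $\frac{\partial_j\mu}{\mu}\mathring{V}_i\boldsymbol{\rho}$ and $\delta_i^j\frac{\partial_l\mu}{\mu}\mathring{V}_l\boldsymbol{\rho}$. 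Contracting with $n_j$ and passing to the limit on $S$, the constant-coefficient part contributes the classical traction jump $\mathring{\bs{T}}^{\pm}(\mathring{\Pi^{s}}\boldsymbol{\rho},\mathring{\bs{V}}\boldsymbol{\rho})=\pm\frac{1}{2}\boldsymbol{\rho}+\mathring{\bs{\mathcal{W}}'}\boldsymbol{\rho}$, whereas the correction terms are continuous across $S$ (they involve only $\gamma^{\pm}\mathring{V}_i\boldsymbol{\rho}=\mathring{\mathcal{V}}_i\boldsymbol{\rho}$ and the continuous factor $\nabla\mu/\mu$) and, after contraction with $n_j$, reproduce term by term exactly $\bs{\mathcal{W}'}\boldsymbol{\rho}-\mathring{\bs{\mathcal{W}}'}\boldsymbol{\rho}$ as recorded in \eqref{ch2relationTV}. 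Adding the two contributions cancels $\mathring{\bs{\mathcal{W}}'}\boldsymbol{\rho}$ and leaves $\bs{T}^{\pm}(\Pi^{s}\boldsymbol{\rho},\bs{V}\boldsymbol{\rho})=\pm\frac{1}{2}\boldsymbol{\rho}+\bs{\mathcal{W}'}\boldsymbol{\rho}$.

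The main obstacle is the bookkeeping in this last computation: I expect the delicate points to be (i) correctly using $\div\mathring{\bs{V}}\boldsymbol{\rho}=0$ so that the compressible $-\frac{2}{3}\delta_i^j\div$ term collapses to a first-order $\nabla\mu$ correction rather than contributing to the principal part, and (ii) verifying that the collection of correction terms matches the bracket of \eqref{ch2relationTV} after contraction with the normal, so that they carry no jump and combine cleanly. Once these index manipulations are settled, the jump is governed entirely by the constant-coefficient traction and the result follows.
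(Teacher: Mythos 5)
Your proposal is correct and takes essentially the same route as the paper: Theorem \ref{ch2jumps} is stated there without a written-out argument, the authors appealing to the potential relations \eqref{ch2relationU}--\eqref{ch2relationTV} together with the classical constant-coefficient jump relations (with proof deferred to the cited reference \cite[Section 4]{carlosstokes}), which is exactly the reduction you carry out. Your explicit verification in the traction case --- using $\div\mathring{\bs{V}}\boldsymbol{\rho}=0$ and checking that the $\nabla\mu/\mu$ correction terms are continuous across $S$ and reassemble into $\bs{\mathcal{W}}'\boldsymbol{\rho}-\mathring{\bs{\mathcal{W}}}'\boldsymbol{\rho}$ as in \eqref{ch2relationTV} --- is precisely the bookkeeping underlying that cited proof.
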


%

The non-invertibility of the trace of the single layer potential operator for the velocity in the constant coefficient case is well known, see e.g. \cite[Theorem 3.8]{kohr1}. This results in the non-invertibility of its counterpart in the variable coefficient case. However, it is possible to restrict the domain of the operator to a subspace in which it is invertible, similar to \cite[Lemma 2.2]{reidinger}. 

\begin{theorem}\label{singinvV} The operator $\boldsymbol{\mathcal{V}}$ is $\boldsymbol{H}_{\mathbb{N}}^{-1/2}(\partial \Omega)$- elliptic
 where $\boldsymbol{H}_{\mathbb{N}}^{-1/2}(\partial \Omega)$ is defined as
 \begin{equation*}
 \boldsymbol{H}_{\mathbb{N}}^{-1/2}(\partial \Omega):=\lbrace \bs{w}\in \boldsymbol{H}^{-1/2}(\partial \Omega)\,\vert \,\,\langle\, \boldsymbol{w}\,,\,\boldsymbol{n}\,\rangle_{S} =0\rbrace.
 \end{equation*}
 and $\bs{n}$ is the unit normal vector to the surface $S$. 
\end{theorem}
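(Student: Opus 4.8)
The plan is to reduce the statement to the coercivity of the \emph{constant-coefficient} single layer operator $\mathring{\bs{\mathcal V}}$ on $\bs H^{-1/2}_{\mathbb N}(\partial\Omega)$ and then to transfer it through the factorisation \eqref{ch2relationcalVW}, $\bs{\mathcal V}\bs\rho=\frac1\mu\mathring{\bs{\mathcal V}}\bs\rho$. Because of the factor $1/\mu$ the operator $\bs{\mathcal V}$ is not self-adjoint, so instead of the bare pairing I would work with the $\mu$-weighted bilinear form $a(\bs w,\bs w):=\langle\mu\,\bs{\mathcal V}\bs w,\bs w\rangle_S$; the weight is chosen precisely to cancel the coefficient, since $\mu\,\bs{\mathcal V}\bs w=\mathring{\bs{\mathcal V}}\bs w$ pointwise on $S$ and hence $a(\bs w,\bs w)=\langle\mathring{\bs{\mathcal V}}\bs w,\bs w\rangle_S$. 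As $\mu\in\mathcal C^1$ satisfies $0<c<\mu$ and is bounded on the compact surface $S$, multiplication by $\mu^{\pm1}$ is a bounded isomorphism of $\bs H^{-1/2}(\partial\Omega)$, so a lower bound for $a$ is equivalent, up to the fixed constants $c$ and $\sup_S\mu$, to the ellipticity of $\bs{\mathcal V}$ claimed in Theorem~\ref{singinvV}.

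For the constant-coefficient core I would combine the classical positivity of the Stokes single layer (as in \cite[Theorem 3.8]{kohr1} and \cite[Lemma 2.2]{reidinger}) with a direct energy computation that also reveals the role of the subspace. Fix $\bs w\in\bs H^{-1/2}_{\mathbb N}(\partial\Omega)$ and set $\bs u:=\mathring{\bs V}\bs w$, $\pi:=\mathring{\Pi}^{s}\bs w$. Since for $\mu=1$ the parametrix is a genuine fundamental solution, $(\pi,\bs u)$ solves the homogeneous \emph{incompressible} Stokes system in $\Omega^{+}$ and in $\Omega^{-}$, the velocity trace is continuous across $S$ with $\gamma^{+}\bs u=\gamma^{-}\bs u=\mathring{\bs{\mathcal V}}\bs w$, and the traction jump equals $\bs w$ by the constant-coefficient analogue of the relation $\bs T^{+}-\bs T^{-}=\bs\rho$ contained in Theorem~\ref{ch2jumps}. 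Applying the first Green identity \eqref{ch2GF1} in $\Omega^{+}$ and its exterior counterpart in $\Omega^{-}$ (the decay of the Stokeslet lets one discard the contribution of the sphere at infinity) and adding the two identities, the continuity of the trace together with the traction jump gives $\langle\mathring{\bs{\mathcal V}}\bs w,\bs w\rangle_S=\int_{\mathbb R^{3}\setminus S}\tfrac12\big(\partial_iu_j+\partial_ju_i\big)\big(\partial_iu_j+\partial_ju_i\big)\,dx\ge0$, where the remaining terms of $E$ drop out because $\div\bs u=0$.

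It remains to turn this non-negativity into strict coercivity, and this is where $\bs H^{-1/2}_{\mathbb N}$ is essential. The quadratic form vanishes exactly when $\bs u$ is an infinitesimal rigid motion on each side; decay at infinity forces $\bs u\equiv\bs0$ in $\Omega^{-}$, continuity of the trace then gives $\gamma^{+}\bs u=\bs0$, and hence $\bs u\equiv\bs0$ in $\Omega^{+}$ as well (cf. \cite[Lemma 10.5]{mclean}). With $\bs u\equiv\bs0$ the strain vanishes and the traction reduces to its pressure part, so $\bs w=\bs T^{+}\bs u-\bs T^{-}\bs u=(\pi^{-}-\pi^{+})\bs n$ is proportional to $\bs n$; the constraint $\langle\bs w,\bs n\rangle_S=0$ then forces $\bs w=\bs0$ because $\langle\bs n,\bs n\rangle_S=|S|>0$. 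Thus the degenerate hydrostatic direction $\mathrm{span}\{\bs n\}$ is exactly the one excluded, and the form is positive definite on $\bs H^{-1/2}_{\mathbb N}$. Upgrading positive definiteness to the estimate $\langle\mathring{\bs{\mathcal V}}\bs w,\bs w\rangle_S\ge c_0\|\bs w\|^{2}_{\bs H^{-1/2}(\partial\Omega)}$ is the standard step: $\mathring{\bs{\mathcal V}}$ is a self-adjoint pseudodifferential operator of order $-1$ with non-negative principal symbol, so a G\aa rding inequality combined with injectivity on the closed subspace yields coercivity, as in \cite{kohr1,reidinger}.

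The main obstacle is this identification of the null space of the form with $\mathrm{span}\{\bs n\}$: it is the only genuinely Stokes-specific point, and it requires using the exterior decay and the continuity of the single layer trace in tandem to rule out nonzero rigid motions and to isolate the pressure contribution to the traction jump. By comparison the variable-coefficient passage is routine, the only checks being that $\mu\,\bs{\mathcal V}=\mathring{\bs{\mathcal V}}$ on $S$ and that the two-sided bound $0<c\le\mu\le\sup_S\mu<\infty$ lets one read off the ellipticity of $\bs{\mathcal V}$ from the coercivity of $a$.
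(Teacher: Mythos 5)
Your route has the same skeleton as the paper's own proof: everything is funnelled through the relation \eqref{ch2relationcalVW}, $\bs{\mathcal V}\bs\rho=\mu^{-1}\mathring{\bs{\mathcal V}}\bs\rho$, to the constant-coefficient operator $\mathring{\bs{\mathcal V}}$, whose $\bs{H}^{-1/2}_{\mathbb N}$-ellipticity the paper simply cites from \cite[Lemma 2.2]{reidinger}. Your second and third paragraphs re-prove that cited result rather than citing it, and they do so correctly: the two-sided energy identity using the traction jump $\bs{T}^{+}-\bs{T}^{-}=\bs w$ from Theorem \ref{ch2jumps}, the identification of the degenerate direction with $\mathrm{span}\{\bs n\}$ via exterior decay and rigid motions, and the G\aa rding-plus-injectivity upgrade are precisely the standard argument behind that reference. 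So the constant-coefficient core is sound, merely redundant relative to the paper.

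The genuine gap is in your transfer step, namely the sentence asserting that a lower bound for $a(\bs w,\bs w)=\langle\mu\bs{\mathcal V}\bs w,\bs w\rangle_S=\langle\mathring{\bs{\mathcal V}}\bs w,\bs w\rangle_S$ is ``equivalent, up to the fixed constants, to the ellipticity of $\bs{\mathcal V}$''. It is not. Plain ellipticity means $\langle\bs{\mathcal V}\bs w,\bs w\rangle_S\ge c\|\bs w\|^2_{\bs H^{-1/2}(\partial\Omega)}$, and here $\langle\bs{\mathcal V}\bs w,\bs w\rangle_S=\langle\mathring{\bs{\mathcal V}}\bs w,\mu^{-1}\bs w\rangle_S$: the weight lands on only one side of the duality pairing, and coercivity of a quadratic form is not stable under such one-sided composition with an isomorphism. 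A finite-dimensional analogue makes this concrete: with $A=\begin{pmatrix}1&0.9\\0.9&1\end{pmatrix}$ symmetric positive definite and the positive diagonal weight $D=\mathrm{diag}(100,1)$, the vector $x=(1,-5)^{\top}$ gives $x^{\top}DAx=-329.5<0$; since the paper assumes only $0<c<\mu$ with $\mu\in\mathcal C^{1}$ (no bound on its oscillation), the cross terms you would need to absorb are not small, and the claimed equivalence has no proof. What you have honestly established is ellipticity of the \emph{weighted} form $a$, not of the bare one. To be fair, the paper's own one-line proof glosses over exactly the same point: what ``directly follows'' from \eqref{ch2relationcalVW} is injectivity (indeed invertibility onto the image) of $\bs{\mathcal V}$ on $\bs{H}^{-1/2}_{\mathbb N}(\partial\Omega)$, because multiplication by $\mu^{-1}$ is an isomorphism of $\bs{H}^{1/2}(\partial\Omega)$ and $\mathring{\bs{\mathcal V}}$ is injective there --- and injectivity is the only consequence actually used later (to conclude $\bs\psi=\bs 0$ from \eqref{d3} in Theorem \ref{DOpthm1}). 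So either downgrade the conclusion to invertibility/injectivity on the subspace, which your factorisation does give, or state the result as ellipticity of the $\mu$-weighted form $a$, which your energy argument does prove; the unweighted ellipticity asserted in Theorem \ref{singinvV} is not established by your argument (nor, strictly speaking, by the paper's).
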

\begin{proof}
The proof directly follows from the relation \eqref{ch2relationVW} for the operator $\bs{\mathcal{V}}$ and the invertibility of the operator $\bs{\mathring{\mathcal{V}}}$ in $\boldsymbol{H}_{\mathbb{N}}^{-1/2}(\partial \Omega)$, see \cite[Lemma 2.2]{reidinger}.
\end{proof}
\begin{rem}\label{remN}
Let us remark that if $(C,0)$ are all the possible solutions of the BVP \eqref{ch2BVP1h}- \eqref{ch2BVPDh} in the space $\bs{H}^{1,0}(\Omega; \bs{\mathcal{A}})$, then, the distribution $\bs{\tau}:=\bs{T}^{+}(C,\bs{0})=C\bs{n}$ belongs to $\boldsymbol{H}_{\mathbb{N}}^{-1/2}(\partial \Omega)$ if and only if $C=0$. 

In the Theorem \ref{ch2BVPUS} the non-uniqueness of the homogeneous BVP, and thus of the non-homogeneous BVP arises due to the fact that the pressure is defined up to a constant. 
Therefore, only when the domain of the traction operator is reduced to $\boldsymbol{H}_{\mathbb{N}}^{-1/2}(\partial \Omega)$, the solution of the BVP \eqref{ch2BVP1}- \eqref{ch2BVPD} is unique in $\bs{H}^{1,0}(\Omega; \bs{\mathcal{A}})$ as we are imposing an additional condition on the constant $C$ to be uniquely determined. 
\end{rem}

\section{Integral representation of the velocity and pressure}
To obtain systems of BDIEs, we need an integral representation of the velocity $\bs{v}$ and the pressure $p$, also called third Green identity. These are obtained by can applying the second Green identity \eqref{ch2secondgreen} to any 
$(p,\bs{v})\in \bs{H}^{1,0}(\Omega;  \boldsymbol{\mathcal{A}})$ and to the parametrix $(q^{k}, \boldsymbol{u}^{k})$, keeping in mind the relation \eqref{ch2param} we obtain
\begin{equation}\label{ch2representationp}
p+\mathcal{R}^{\bullet} \boldsymbol{v} - \Pi^{s}\boldsymbol{T}^{+}(p,\bs{v}) +\Pi^{d} \boldsymbol{\gamma}^{+}\boldsymbol{v}=\mathring{\mathcal{Q}}\boldsymbol{\mathcal{A}}(p,\bs{v})+\frac{4\mu}{3}\div\boldsymbol{v},\quad \textnormal{in}\,\,\,\Omega.
\end{equation}
\begin{equation}\label{ch2vrepresentationA}
\boldsymbol{v}+\boldsymbol{\mathcal{R}}\boldsymbol{v}-\boldsymbol{V}\boldsymbol{T}^{+}(p,\bs{v})+\boldsymbol{W}\boldsymbol{\gamma}^{+}\boldsymbol{v}=\boldsymbol{\mathcal{U}\mathcal{A}}(p,\bs{v})+\boldsymbol{\mathcal{Q}}(\div(\boldsymbol{v})) ,\quad \textnormal{in } \Omega.
\end{equation}
The derivation of the representation formula for the pressure is not so obvious. For a step-by-step proof of \eqref{ch2representationp}-\eqref{ch2vrepresentationA}, please refer to \cite[Theorem 5.1]{carlosstokes}.

If the couple $(p,\bs{v})\in \bs{H}^{1,0}(\Omega;  \boldsymbol{\mathcal{A}})$ is a solution of the Stokes PDEs \eqref{ch2BVP1}-\eqref{ch2BVPdiv} with variable coefficient, then \eqref{ch2vrepresentationA} and \eqref{ch2representationp} give
\begin{align}\label{ch2GP}
&p+\mathcal{R}^{\bullet} \boldsymbol{v} - \Pi^{s}\boldsymbol{T}^{+}(p,\bs{v}) +\Pi^{d}\boldsymbol{\gamma}^{+}\boldsymbol{v}=\mathring{\mathcal{Q}}\boldsymbol{f}+\frac{4\mu}{3}g \quad \textnormal{in}\quad \Omega,\\
\label{ch2GV}
&\boldsymbol{v}+\boldsymbol{\mathcal{R}}\boldsymbol{v}-\boldsymbol{V}\boldsymbol{T}^{+}(p,\bs{v})+\boldsymbol{W}\boldsymbol{\gamma}^{+}\boldsymbol{v}=\boldsymbol{\boldsymbol{\mathcal{U}f}}+\boldsymbol{\mathcal{Q}}g \quad \textnormal{in}\quad \Omega.
\end{align}
We will also need an integral representation formula for the trace and traction of $(p,\bs{v})\in \bs{H}^{1,0}(\Omega;  \boldsymbol{\mathcal{A}})$ on $S$. We highlight that the traction operator is well defined applied to the third Green identities \eqref{ch2GP}-\eqref{ch2GV} by virtue of Theorem \ref{ch2H10maps}. 
\begin{equation}\label{ch2GG}
\frac{1}{2}\boldsymbol{\gamma}^{+}\boldsymbol{v}+\gamma^{+}\boldsymbol{\mathcal{R}}\boldsymbol{v}-
\boldsymbol{\mathcal{V}}\boldsymbol{T}^{+}(p,\bs{v})+\boldsymbol{\mathcal{W}}\gamma^{+}\boldsymbol{v}=
\boldsymbol{\gamma}^{+}\boldsymbol{\mathcal{U}}\boldsymbol{f}+\boldsymbol{\gamma}^{+}\boldsymbol{\mathcal{Q}}g,
\end{equation}
\begin{equation}\label{ch2GT}
\frac{1}{2}\boldsymbol{T}^{+}(p,\bs{v})+
\boldsymbol{T}^{+}(\mathcal{R}^{\bullet}, \boldsymbol{\mathcal{R}})\boldsymbol{v}
-\boldsymbol{\mathcal{W'}}\boldsymbol{T}^{+}(p,\bs{v})+\boldsymbol{\mathcal{L}}^{+}\gamma^{+}\boldsymbol{v}
=\boldsymbol{\widetilde{T}^{+}}(g, \boldsymbol{f})
\end{equation}
where
\begin{equation}\label{ch2tildeT}
\boldsymbol{\widetilde{T}^{+}}(g, \boldsymbol{f}):= \boldsymbol{\boldsymbol{T}^{+}}(\mathring{\mathcal{Q}}\boldsymbol{f}+\frac{4\mu}{3}g,\,\,\boldsymbol{\boldsymbol{\mathcal{U}f}}+\boldsymbol{\mathcal{Q}}g).
\end{equation}

One can prove the following three assertions that are instrumental for proving the equivalence of the BDIE systems and the Dirichlet BVP. 

\begin{theorem}\label{ch2L1}
Let $p\in L^{2}(\Omega)$, $\boldsymbol{v}\in \boldsymbol{H}^{1}(\Omega)$, $g\in L_{2}(\Omega)$, $\boldsymbol{f}\in \boldsymbol{L}_{2}(\Omega),$ $\boldsymbol{\boldsymbol{\Psi}}\in \boldsymbol{H}^{-1/2}(\partial \Omega)$ and $\boldsymbol{\boldsymbol{\Phi}}\in \boldsymbol{H}^{1/2}(\partial \Omega)$ satisfy the equations
\begin{align}
p+\mathcal{R}^{\bullet} \boldsymbol{v} -\Pi^{s}\boldsymbol{\Psi} &=\mathring{\mathcal{Q}}\boldsymbol{f}+\frac{4\mu}{3}g -\Pi^{d}\boldsymbol{\Phi}\hspace{0.5em}\textnormal{in}\,\, \Omega,\label{ch2lemap}\\
\boldsymbol{v}+\boldsymbol{\mathcal{R}v}-\boldsymbol{V\Psi}&=\boldsymbol{\boldsymbol{\mathcal{U}f}}+\boldsymbol{\mathcal{Q}}g -\boldsymbol{W\Phi}\hspace{0.5em}\textnormal{in}\,\, \Omega.\label{ch2lemav}
\end{align}
Then $(  p,\bs{v})\in \bs{H}^{1,0}(\Omega, \boldsymbol{\mathcal{A}})$ and solve the equations $\boldsymbol{\mathcal{A}}( p, \boldsymbol{v})=\boldsymbol{f}$ and $\textnormal{div}(\bs{v})=g$. Moreover, the following relations hold true:
\begin{align}
\Pi^{s}(\boldsymbol{\boldsymbol{\Psi}} - \boldsymbol{T}^{+}(p,\bs{v})) &= \Pi^{d}(\boldsymbol{\boldsymbol{\Phi}} - \gamma^{+}\boldsymbol{v})\hspace{1em}\textnormal{in}\,\,\Omega,\label{ch2lemarel2}\\
\boldsymbol{V}(\boldsymbol{\Psi} - \boldsymbol{T}^{+}(p,\bs{v})) &= \boldsymbol{W}(\boldsymbol{\boldsymbol{\Phi}} - \gamma^{+}\boldsymbol{v})\hspace{1em}\textnormal{in}\,\,\Omega\label{ch2lemare1}.
\end{align}
\end{theorem}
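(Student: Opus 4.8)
The plan is to read \eqref{ch2lemap}--\eqref{ch2lemav} as the third Green identities \eqref{ch2representationp}--\eqref{ch2vrepresentationA} in which the (still unknown) canonical traction $\boldsymbol{T}^{+}(p,\bs{v})$ and trace $\bs{\gamma}^{+}\bs{v}$ have been replaced by the given data $\boldsymbol{\Psi}$ and $\boldsymbol{\Phi}$, and the sources $\boldsymbol{\mathcal{A}}(p,\bs{v})$, $\div\bs{v}$ by $\bs{f}$, $g$. Accordingly I would proceed in two stages: first upgrade the regularity of $(p,\bs{v})$ to $\bs{H}^{1,0}(\Omega;\boldsymbol{\mathcal{A}})$ so that the genuine third Green identities become available for $(p,\bs{v})$ itself, and then subtract them from the hypotheses to read off both the PDEs and the relations \eqref{ch2lemarel2}--\eqref{ch2lemare1}.

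For the regularity stage I would rewrite \eqref{ch2lemap}--\eqref{ch2lemav} as the single statement that the pair $(p,\bs{v})$ equals
\[
(\Pi^{s}\boldsymbol{\Psi},\bs{V}\boldsymbol{\Psi})-(\Pi^{d}\boldsymbol{\Phi},\bs{W}\boldsymbol{\Phi})+(\mathring{\mathcal{Q}}\bs{f},\bs{\mathcal{U}}\bs{f})+\Big(\tfrac{4\mu}{3}g,\bs{\mathcal{Q}}g\Big)-(\mathcal{R}^{\bullet}\bs{v},\bs{\mathcal{R}}\bs{v}).
\]
By Theorem \ref{ch2H10maps} each summand is a pressure–velocity pair lying in $\bs{H}^{1,0}(\Omega;\boldsymbol{\mathcal{A}})$: the layer pairs by \eqref{ch2H10PV}--\eqref{ch2H10PiW} (since $\boldsymbol{\Psi}\in\bs{H}^{-1/2}(\partial\Omega)$, $\boldsymbol{\Phi}\in\bs{H}^{1/2}(\partial\Omega)$), the Newtonian pairs by \eqref{ch2H10QU} and \eqref{ch2H10IQ} (since $\bs{f}\in\bs{L}_2(\Omega)$, $g\in L^2(\Omega)$), and the remainder pair by \eqref{ch2H10RR} (since $\bs{v}\in\bs{H}^{1}(\Omega)$). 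As $p\in L^2(\Omega)$ and $\bs{v}\in\bs{H}^{1}(\Omega)$ are given, it follows that $\boldsymbol{\mathcal{A}}(p,\bs{v})\in\bs{L}_2(\Omega)$, i.e. $(p,\bs{v})\in\bs{H}^{1,0}(\Omega;\boldsymbol{\mathcal{A}})$.

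Now that $(p,\bs{v})\in\bs{H}^{1,0}(\Omega;\boldsymbol{\mathcal{A}})$, the identities \eqref{ch2representationp}--\eqref{ch2vrepresentationA} hold verbatim for it. Subtracting \eqref{ch2lemap} from \eqref{ch2representationp} and \eqref{ch2lemav} from \eqref{ch2vrepresentationA}, the terms $p$, $\bs{v}$, $\mathcal{R}^{\bullet}\bs{v}$ and $\bs{\mathcal{R}}\bs{v}$ cancel; writing $\boldsymbol{\Theta}:=\boldsymbol{\mathcal{A}}(p,\bs{v})-\bs{f}$ and $\theta:=\div\bs{v}-g$ I obtain, in $\Omega$,
\[
\Pi^{s}(\boldsymbol{\Psi}-\boldsymbol{T}^{+}(p,\bs{v}))-\Pi^{d}(\boldsymbol{\Phi}-\bs{\gamma}^{+}\bs{v})=\mathring{\mathcal{Q}}\boldsymbol{\Theta}+\tfrac{4\mu}{3}\theta,
\]
\[
\bs{V}(\boldsymbol{\Psi}-\boldsymbol{T}^{+}(p,\bs{v}))-\bs{W}(\boldsymbol{\Phi}-\bs{\gamma}^{+}\bs{v})=\bs{\mathcal{U}}\boldsymbol{\Theta}+\bs{\mathcal{Q}}\theta.
\]
The purpose of this form is that the desired relations \eqref{ch2lemarel2}--\eqref{ch2lemare1} are precisely these two equations once the right-hand sides vanish; hence everything reduces to proving $\boldsymbol{\Theta}=\bs{0}$ and $\theta=0$, which simultaneously delivers $\boldsymbol{\mathcal{A}}(p,\bs{v})=\bs{f}$ and $\div\bs{v}=g$.

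To close this last step, which I expect to be the main obstacle, I would apply the operators $\boldsymbol{\mathcal{A}}$ and $\div$ to the two displayed equations. Using the relations \eqref{ch2relationU}--\eqref{ch2relationL} I would re-express every variable-coefficient potential through its constant-coefficient counterpart: the single- and double-layer pairs $(\mathring{\Pi}^{s},\mathring{\bs{V}})$ and $(\mathring{\Pi}^{d},\mathring{\bs{W}})$ solve the homogeneous constant-coefficient Stokes system and are divergence-free in $\Omega$, so that after applying $\boldsymbol{\mathcal{A}}$ and $\div$ only the weakly singular $\nabla\mu$-remainders survive on the left, while on the right the Newtonian pair reproduces $\boldsymbol{\Theta}$ and $\theta$ modulo the analogous remainders dictated by the parametrix relation \eqref{ch2param}. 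The genuinely delicate part is the careful accounting of these Cauchy-singular and weakly singular contributions and the verification that they cancel, leaving an identity that forces $\boldsymbol{\Theta}=\bs{0}$ and $\theta=0$; alternatively, after passing to the boundary one may peel off the surface densities by invoking the $\bs{H}_{\mathbb{N}}^{-1/2}(\partial\Omega)$-ellipticity of $\bs{\mathcal{V}}$ from Theorem \ref{singinvV}. Substituting $\boldsymbol{\Theta}=\bs{0}$ and $\theta=0$ back into the two displayed equations then yields exactly \eqref{ch2lemarel2} and \eqref{ch2lemare1}, completing the proof.
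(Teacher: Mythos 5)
Your first two stages are sound and coincide with the paper's own argument: the $\bs{H}^{1,0}(\Omega;\bs{\mathcal{A}})$ regularity follows exactly as you say from Theorem \ref{ch2H10maps}, and subtracting the representation formulas \eqref{ch2representationp}--\eqref{ch2vrepresentationA} from the hypotheses \eqref{ch2lemap}--\eqref{ch2lemav} to isolate $\boldsymbol{\Theta}=\bs{\mathcal{A}}(p,\bs{v})-\bs{f}$ and $\theta=\div\bs{v}-g$ is legitimate (and even slightly tidier than the paper, which applies the divergence directly to \eqref{ch2lemav} and therefore must compute $\div(\mu\bs{\mathcal{R}}\bs{v})$ explicitly, see \eqref{ch2c115divbis}, whereas in your subtracted equations the $\bs{\mathcal{R}}$- and $\mathcal{R}^{\bullet}$-terms have already cancelled). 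The genuine gap is in your closing step, which is precisely the crux of the theorem: you propose to apply the \emph{variable-coefficient} operator $\bs{\mathcal{A}}$ (and $\div$) to the subtracted equations and then verify that the resulting ``Cauchy-singular and weakly singular $\nabla\mu$-contributions cancel''. That cancellation is exactly what is left unproved, and it is not how the argument closes; chasing those remainder integrals is substantially harder than the mechanism that actually works. The paper's device is to multiply the velocity equation by $\mu$ and use \eqref{ch2relationU}, \eqref{ch2relationQ2}, \eqref{ch2relationVW}, \eqref{ch2relationP} to rewrite both subtracted equations entirely in terms of \emph{constant-coefficient} potentials of modified densities ($\boldsymbol{\Psi}^{*}$, $\mu\boldsymbol{\Phi}^{*}$, $\mu\theta$), and only then apply the \emph{constant-coefficient} operator $\mathring{\bs{\mathcal{A}}}$ to the resulting pressure--velocity pairs: the layer pairs $(\mathring{\Pi}^{s},\mathring{\bs{V}})$ and $(\mathring{\Pi}^{d},\mathring{\bs{W}})$ are annihilated exactly, with no remainder terms arising at all, while the Newtonian pair reproduces its density, giving $\boldsymbol{\Theta}=\bs{0}$ outright. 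The divergence step is equally clean: taking $\div$ of the $\mu$-multiplied velocity equation, every constant-coefficient velocity potential ($\mathring{\bs{V}}$, $\mathring{\bs{W}}$, $\mathring{\bs{\mathcal{U}}}$) is divergence-free and $\div\mathring{\bs{\mathcal{Q}}}(\mu\theta)=\mu\theta$, so $\mu\theta=0$, i.e.\ $\div\bs{v}=g$. (The paper establishes $\div\bs{v}=g$ first, so the $\theta$-terms are already gone when $\mathring{\bs{\mathcal{A}}}$ is applied; in your ordering you would either do the same, or observe that the pair $\left(\tfrac{4}{3}\mu\theta,\,\mathring{\bs{\mathcal{Q}}}(\mu\theta)\right)$ is itself a homogeneous solution of $\mathring{\bs{\mathcal{A}}}$, cf.\ \eqref{ch2H10IQ}.)

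Your fallback via Theorem \ref{singinvV} does not repair this gap. The conclusions \eqref{ch2lemarel2}--\eqref{ch2lemare1} do not assert that the boundary densities $\boldsymbol{\Psi}-\bs{T}^{+}(p,\bs{v})$ and $\boldsymbol{\Phi}-\gamma^{+}\bs{v}$ vanish, only that certain potentials of them coincide, so ``peeling off the surface densities'' is not the goal; moreover $\boldsymbol{\Psi}-\bs{T}^{+}(p,\bs{v})$ has no reason to lie in the subspace $\bs{H}_{\mathbb{N}}^{-1/2}(\partial\Omega)$ on which the ellipticity of $\bs{\mathcal{V}}$ holds, and passing to the boundary still leaves the unknown volume terms $\boldsymbol{\Theta}$ and $\theta$ in the equations, so no conclusion about them could be extracted that way.
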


\begin{proof}
Firstly, the fact that $(p,\bs{v})\in \bs{H}^{1,0}(\Omega, \boldsymbol{\mathcal{A}})$ is a direct consequence of the Theorem \ref{ch2H10maps}.  

Secondly, let us prove that $( p, \boldsymbol{v})$ solve the PDE and $\div(\boldsymbol{v})=g$. Multiply equation \eqref{ch2lemav} by $\mu$ and apply relations \eqref{ch2relationU}-\eqref{ch2relationQ2} along with relation \eqref{ch2relationVW} to obtain
\begin{equation}\label{ch2lemadiv}
\bs{v} = \mathring{\boldsymbol{\mathcal{U}}}\boldsymbol{f} + \mathring{\boldsymbol{\mathcal{Q}}}(\mu g)-\mu \boldsymbol{\mathcal{R}}\boldsymbol{v}+ \mathring{\boldsymbol{V}}\boldsymbol{\Psi}-\mathring{\boldsymbol{W}}(\mu\boldsymbol{\Phi}).
\end{equation}
 Apply the divergence operator to both sides of \eqref{ch2lemadiv}, taking into account relation \eqref{ch2relationR} and the fact that the potentials $\mathring{\boldsymbol{\mathcal{U}}}, \mathring{\boldsymbol{V}}$, 
  and $\mathring{\boldsymbol{W}}$ are divergence free. Hence, we obtain
 \begin{align}\label{ch2c115div}
  \div(\mu\boldsymbol{v})&= \div\left( \mathring{\boldsymbol{\mathcal{U}}}\boldsymbol{f} + \mathring{\boldsymbol{\mathcal{Q}}}(\mu g)-\mu \boldsymbol{\mathcal{R}}\boldsymbol{v}+ \mathring{\boldsymbol{V}}\boldsymbol{\Psi}-\mathring{\boldsymbol{W}}(\mu\boldsymbol{\Phi})\right)= \nonumber\\
  &= \div\mathring{\boldsymbol{\mathcal{Q}}}(\mu g) - \div(\mu \boldsymbol{\mathcal{R}}\boldsymbol{v}).
 \end{align}
 
To work out $\div(\mu \boldsymbol{\mathcal{R}}\boldsymbol{v})$ we apply the relation of \eqref{ch2relationR} and take into account the divergence free of the operators involved and the harmonic properties of the pressure newtonian potential as follows 
 \begin{align}\label{ch2c115divbis}
\div(\mu \boldsymbol{\mathcal{R}}\boldsymbol{v}) =& \dfrac{\partial(\mu\mathcal{R}_{k}\boldsymbol{v})}{\partial y_{k}}=-\dfrac{\partial}{\partial y_{k}}\left(
\frac{\partial}{\partial y_j}\mathring{\mathcal{U}}_{ki}(v_j\partial_i\mu)
+\frac{\partial}{\partial y_i}\mathring{\mathcal{U}}_{kj}(v_j\partial_i\mu)-\mathring{\mathcal{Q}}_k(v_j\partial_j\mu)\right)\nonumber\\
=&\dfrac{\partial}{\partial y_{k}}\mathring{\mathcal{Q}}_k(v_j\partial_j\mu)=-\bs{v}\nabla\mu.
 \end{align}

From \eqref{ch2c115div} and \eqref{ch2c115divbis}, it immediately follows
\[ \div(\mu\boldsymbol{v}) = \div\mathring{\boldsymbol{\mathcal{Q}}}(\mu g) - \div(\mu \boldsymbol{\mathcal{R}}\boldsymbol{v}) =\mu g +\bs{v}\nabla\mu \Rightarrow \,\,\div(\bs{v})=g.\]

Further, to prove that $( p, \boldsymbol{v})$ is a solution of the PDE we use equations \eqref{ch2GP} and \eqref{ch2GV} which we can now use as we have proved that $(p,\bs{v})\in \bs{H}^{1,0}(\Omega;\bs{\mathcal{A}})$. Then, substract these from equations \eqref{ch2lemap} and \eqref{ch2lemav} respectively to obtain
\begin{align}
\Pi^{d}\boldsymbol{\Phi}^{*}-\Pi^{s}\boldsymbol{\Psi}^{*}&=\mathcal{Q}(\boldsymbol{\mathcal{A}}(p,\bs{v})-\boldsymbol{f}),\label{ch2c118}\\
\boldsymbol{W}\boldsymbol{\boldsymbol{\Phi}}^{*}-\boldsymbol{V}\boldsymbol{\boldsymbol{\Psi}}^{*}&=\boldsymbol{\mathcal{U}}(\boldsymbol{\mathcal{A}}(p,\bs{v})-\boldsymbol{f}).\label{ch2c117}
\end{align}
where $\boldsymbol{\Psi}^{*}:=\boldsymbol{T}^{+}(p,\bs{v})-\boldsymbol{\Psi}$, and $\boldsymbol{\boldsymbol{\Phi}}^{*}=\boldsymbol{\gamma}^{+}\boldsymbol{v}-\boldsymbol{\boldsymbol{\Phi}}$. 

After multiplying \eqref{ch2c117} by the variable viscosity coefficient and apply the potential relation \eqref{ch2relationP} along with \eqref{ch2relationU} and \eqref{ch2relationVW}, to equations \eqref{ch2c118} and \eqref{ch2c117}, we arrive at
\begin{align*}
\mathring{\Pi}^{d}(\mu\boldsymbol{\boldsymbol{\Phi}}^{*})- \mathring{\Pi}^{s}\boldsymbol{\boldsymbol{\Psi}}^{*}&= \mathring{\mathcal{Q}}(\boldsymbol{\mathcal{A}}(p,\bs{v})-\boldsymbol{f}),\\
\mathring{W}(\mu\boldsymbol{\boldsymbol{\Phi}}^{*})-\mathring{V}\boldsymbol{\boldsymbol{\Psi}}^{*}&=\mathring{\boldsymbol{\mathcal{U}}}(\boldsymbol{\mathcal{A}}(p,\bs{v})-\boldsymbol{f}).
\end{align*}
Applying the Stokes operator with $\mu=1$, to these two previous equations, taking into account that the right hand side are the newtonian potentials for the velocity and pressure, 
\begin{align*}
\mathring{\mathcal{A}}(\mathring{\Pi}^{d}(\mu\boldsymbol{\boldsymbol{\Phi}}^{*})- \mathring{\Pi}^{s}(\boldsymbol{\boldsymbol{\Psi}}^{*}),\mathring{W}(\mu\boldsymbol{\boldsymbol{\Phi}}^{*})-\mathring{V}\boldsymbol{\boldsymbol{\Psi}}^{*} ) &=\mathring{\mathcal{A}}(\mathring{\mathcal{Q}}(\boldsymbol{\mathcal{A}}(p,\bs{v})-\boldsymbol{f}),\mathring{\boldsymbol{\mathcal{U}}}(\boldsymbol{\mathcal{A}}(p,\bs{v})-\boldsymbol{f}));\\
\Rightarrow\hspace{1em}\boldsymbol{0}&=\boldsymbol{\mathcal{A}}(p,\bs{v})-\boldsymbol{f}\Rightarrow \boldsymbol{\mathcal{A}}(p,\bs{v})=\boldsymbol{f}.
\end{align*}
Hence, the pair $( p, \boldsymbol{v})$ solves the PDE. 

Finally, the relations \eqref{ch2lemarel2} and \eqref{ch2lemare1} follow from the substitution of \[ \boldsymbol{\mathcal{A}}(p,\bs{v})-\boldsymbol{f}=\boldsymbol{0}\] in \eqref{ch2c118} and \eqref{ch2c117}.  
\end{proof}

\begin{theorem}\label{ch2lemma2} Let $\boldsymbol{\boldsymbol{\Psi}}^{*}\in \boldsymbol{H}^{-1/2}(\partial \Omega)$. 
\begin{enumerate}
\item[i)]  If
\begin{align}
\Pi^{s}\boldsymbol{\boldsymbol{\Psi}}^{*}(\boldsymbol{y}) &= \boldsymbol{0}, \quad\hspace{0.5em}\boldsymbol{y}\in\Omega,\label{ch2L2i}\\
\boldsymbol{V}\boldsymbol{\boldsymbol{\Psi}}^{*}(\boldsymbol{y}) &= \boldsymbol{0}, \quad\hspace{0.5em}\boldsymbol{y}\in\Omega,\label{ch2L2ii}
\end{align}
then 
\[ \boldsymbol{\boldsymbol{\Psi}}^{*}=\textbf{0}
.\]
\item[ii)]Let $\boldsymbol{\boldsymbol{\Psi}}^{*}\in \boldsymbol{H}^{1/2}(\partial \Omega)$. Then, if
\begin{align}
\Pi^{d}\boldsymbol{\boldsymbol{\Psi}}^{*}(\boldsymbol{y}) = 0, \quad\hspace{0.5em}\boldsymbol{y}\in\Omega,\label{ch2L2iib}\\
\boldsymbol{W}\boldsymbol{\boldsymbol{\Psi}}^{*}(\boldsymbol{y}) = \boldsymbol{0}, \quad\hspace{0.5em}\boldsymbol{y}\in\Omega,\label{ch2L2iia}
\end{align}
then 
\[ \boldsymbol{\Psi}^{*}=0 .\] 
\end{enumerate}
\end{theorem}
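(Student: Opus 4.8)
The plan is to reduce both assertions to classical uniqueness statements for the \emph{constant-coefficient} ($\mu=1$) Stokes potentials by means of the relations \eqref{ch2relationVW}--\eqref{ch2relationP}, and then to recover the vanishing of the density from the jump relations of Theorem \ref{ch2jumps}. Since $\mu(\boldsymbol{x})>c>0$, these relations make the hypotheses equivalent to statements about the $\mu=1$ potentials. For part (i), $\boldsymbol{V}\boldsymbol{\Psi}^*=\frac{1}{\mu}\mathring{\boldsymbol{V}}\boldsymbol{\Psi}^*=\boldsymbol{0}$ and $\Pi^{s}\boldsymbol{\Psi}^*=\mathring{\Pi^{s}}\boldsymbol{\Psi}^*=0$ in $\Omega$ force $\mathring{\boldsymbol{V}}\boldsymbol{\Psi}^*=\boldsymbol{0}$ and $\mathring{\Pi^{s}}\boldsymbol{\Psi}^*=0$ in $\Omega^{+}$. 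For part (ii), setting $\boldsymbol{\phi}:=\mu\boldsymbol{\Psi}^*\in\boldsymbol{H}^{1/2}(\partial\Omega)$ (multiplication by the positive $\mathcal{C}^1$ function $\mu$ is an isomorphism of $\boldsymbol{H}^{1/2}(\partial\Omega)$), the identities $\boldsymbol{W}\boldsymbol{\Psi}^*=\frac{1}{\mu}\mathring{\boldsymbol{W}}(\mu\boldsymbol{\Psi}^*)$ and $\Pi^{d}\boldsymbol{\Psi}^*=\mathring{\Pi^{d}}(\mu\boldsymbol{\Psi}^*)$ reduce the hypotheses to $\mathring{\boldsymbol{W}}\boldsymbol{\phi}=\boldsymbol{0}$ and $\mathring{\Pi^{d}}\boldsymbol{\phi}=0$ in $\Omega^{+}$, and it then suffices to prove $\boldsymbol{\phi}=\boldsymbol{0}$.

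\textbf{Part (i).} The pair $(\mathring{\Pi^{s}}\boldsymbol{\Psi}^*,\mathring{\boldsymbol{V}}\boldsymbol{\Psi}^*)$ is built from the constant-coefficient fundamental solution and hence solves the homogeneous incompressible Stokes system in $\mathbb{R}^{3}\setminus S$, with the standard decay $\mathring{\boldsymbol{V}}\boldsymbol{\Psi}^*=O(|\boldsymbol{y}|^{-1})$ and $\mathring{\Pi^{s}}\boldsymbol{\Psi}^*=O(|\boldsymbol{y}|^{-2})$ at infinity. Because the single-layer velocity has continuous trace (the $\mu=1$ instance of Theorem \ref{ch2jumps}, $\gamma^{\pm}\mathring{\boldsymbol{V}}\boldsymbol{\Psi}^*=\mathring{\boldsymbol{\mathcal{V}}}\boldsymbol{\Psi}^*$), the hypothesis $\mathring{\boldsymbol{V}}\boldsymbol{\Psi}^*=\boldsymbol{0}$ in $\Omega^{+}$ gives $\gamma^{-}\mathring{\boldsymbol{V}}\boldsymbol{\Psi}^*=\gamma^{+}\mathring{\boldsymbol{V}}\boldsymbol{\Psi}^*=\boldsymbol{0}$. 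Thus the pair solves the exterior Dirichlet problem in $\Omega^{-}$ with zero data and decay; by uniqueness of that problem the velocity vanishes, whence $\nabla\mathring{\Pi^{s}}\boldsymbol{\Psi}^*=\boldsymbol{0}$ and the decay of the pressure forces $\mathring{\Pi^{s}}\boldsymbol{\Psi}^*=0$ in $\Omega^{-}$. Hence both fields vanish on \emph{both} sides of $S$, so $\mathring{\boldsymbol{T}}^{\pm}(\mathring{\Pi^{s}}\boldsymbol{\Psi}^*,\mathring{\boldsymbol{V}}\boldsymbol{\Psi}^*)=\boldsymbol{0}$, and the traction jump relation of Theorem \ref{ch2jumps} yields $\boldsymbol{\Psi}^*=\mathring{\boldsymbol{T}}^{+}-\mathring{\boldsymbol{T}}^{-}=\boldsymbol{0}$.

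\textbf{Part (ii).} Analogously, $(\mathring{\Pi^{d}}\boldsymbol{\phi},\mathring{\boldsymbol{W}}\boldsymbol{\phi})$ solves the homogeneous Stokes system in $\mathbb{R}^{3}\setminus S$ and decays at infinity, now with $\mathring{\boldsymbol{W}}\boldsymbol{\phi}=O(|\boldsymbol{y}|^{-2})$. Since both fields vanish in $\Omega^{+}$, the interior traction $\mathring{\boldsymbol{T}}^{+}(\mathring{\Pi^{d}}\boldsymbol{\phi},\mathring{\boldsymbol{W}}\boldsymbol{\phi})=\boldsymbol{0}$; the traction of the double-layer potential being continuous across $S$ (the hypersingular operator), we also obtain $\mathring{\boldsymbol{T}}^{-}(\mathring{\Pi^{d}}\boldsymbol{\phi},\mathring{\boldsymbol{W}}\boldsymbol{\phi})=\boldsymbol{0}$. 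Therefore the pair solves the exterior Neumann (traction) problem in $\Omega^{-}$ with zero data and decay; uniqueness of that problem (decay excludes the nontrivial rigid motions that would otherwise form the Neumann nullspace) forces both fields to vanish in $\Omega^{-}$. Finally, the velocity jump relation of Theorem \ref{ch2jumps} at $\mu=1$, namely $\gamma^{+}\mathring{\boldsymbol{W}}\boldsymbol{\phi}-\gamma^{-}\mathring{\boldsymbol{W}}\boldsymbol{\phi}=-\boldsymbol{\phi}$, together with $\gamma^{\pm}\mathring{\boldsymbol{W}}\boldsymbol{\phi}=\boldsymbol{0}$, gives $\boldsymbol{\phi}=\boldsymbol{0}$, and hence $\boldsymbol{\Psi}^*=\boldsymbol{0}$.

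\textbf{Main obstacle.} The delicate points are the two exterior uniqueness statements in $\mathbb{R}^{3}$: one must make precise the decay of the potentials at infinity and justify that the energy (first Green) identity on $\Omega^{-}$ can be closed, with the boundary integral over $\{|\boldsymbol{y}|=R\}$ tending to $0$ as $R\to\infty$. In the Neumann case the cubic decay of the double-layer traction against the quadratic decay of its velocity makes this integral of order $R^{-3}$, and the fact that the only decaying rigid motion is the trivial one is exactly what removes the usual Neumann nullspace; the analogous care with the pressure constant (killed by decay) is what upgrades ``$\nabla p=0$'' to ``$p=0$'' in the exterior in both parts. These facts are classical for the Stokes system and may alternatively be quoted from \cite{kohr1, steinbach, reidinger}.
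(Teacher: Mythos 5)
Your proof is correct, but it takes a genuinely different route from the paper's. Both arguments begin the same way, using the relations \eqref{ch2relationVW}--\eqref{ch2relationP} and the positivity of $\mu$ to reduce everything to the constant-coefficient ($\mu=1$) potentials. From there the paper stays on the boundary and inside $\Omega$: in (i) it takes the trace of $\mathring{\boldsymbol{V}}\boldsymbol{\Psi}^{*}=\boldsymbol{0}$ and quotes the known kernel of $\mathring{\boldsymbol{\mathcal{V}}}$ (the span of the normal, \cite[Proposition 2.2]{reidinger}), so that $\boldsymbol{\Psi}^{*}=c\boldsymbol{n}$, and then kills $c$ by observing that $\Pi^{s}(c\boldsymbol{n})$ equals the harmonic double-layer potential $W_{\Delta}(c)=c$ in $\Omega$; in (ii) it applies the traction to get $\mathring{\boldsymbol{\mathcal{L}}}(\mu\boldsymbol{\Psi}^{*})=\boldsymbol{0}$, quotes the kernel of the hypersingular operator (rigid motions, \cite[Theorem 3.8]{kohr1}), and eliminates the rigid motion through the $\Pi^{d}$ equation and harmonic-potential identities. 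You instead run a two-sided potential-theoretic argument: continuity of the single-layer trace (resp.\ the Lyapunov--Tauber continuity of the double-layer traction) transfers the homogeneous data to $\Omega^{-}$, exterior uniqueness with decay forces the potentials to vanish there, and the jump relations of Theorem \ref{ch2jumps} then recover the density as the jump of quantities vanishing on both sides of $S$. In effect you re-prove the kernel characterizations that the paper cites, and your use of the pressure equations \eqref{ch2L2i}, \eqref{ch2L2iib} to rule out the constant-pressure and rigid-motion kernels mirrors exactly their role in the paper. What your route buys is conceptual self-containment and symmetry between (i) and (ii); what it costs is reliance on two facts the paper never states and which you leave as the acknowledged ``main obstacle'': uniqueness for the exterior Dirichlet and traction problems in the weak setting (density only in $\boldsymbol{H}^{\pm 1/2}(\partial\Omega)$, fields in $\boldsymbol{H}^{1}_{\textnormal{loc}}$, with the radiation-type decay you describe), and the Lyapunov--Tauber theorem for the Stokes double layer. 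Both are classical and can be found in the paper's own references \cite{hsiao,kohr1,reidinger} (the two-sidedness of $\mathring{\boldsymbol{\mathcal{L}}}$ is implicit in the paper's relation \eqref{ch2relationL}), so the argument closes, provided you replace the closing remark by precise citations of those two results.
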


\begin{proof} Item i). Let us consider the equation \ref{ch2L2ii} and apply the potential relation for the operator $\boldsymbol{V}$ given by \eqref{ch2relationVW}. 
\begin{align}
\dfrac{1}{\mu(\bs{y})}\mathring{\boldsymbol{V}}\boldsymbol{\boldsymbol{\Psi}}^{*}(\boldsymbol{y}) &=  \boldsymbol{0}, \quad\hspace{0.5em}\boldsymbol{y}\in\Omega\label{s1}
\end{align}
Since $\mu(\bs{y})>0$ for all $\bs{y}\in \Omega$, we can multiply \eqref{s1} by $\mu(y)$ to obtain the following equivalent equation
\begin{align}
\mathcal{\boldsymbol{V}}\boldsymbol{\boldsymbol{\Psi}}^{*}(\boldsymbol{y}) &=  \boldsymbol{0}, \quad\hspace{0.5em}\boldsymbol{y}\in\Omega\label{s2}.
\end{align}
Let us take the trace of \eqref{s2}
\begin{align}
\mathring{\mathcal{\boldsymbol{V}}}\boldsymbol{\boldsymbol{\Psi}}^{*}(\boldsymbol{y}) &=  \boldsymbol{0}, \quad\hspace{0.5em}\boldsymbol{y}\in S\label{s3}.
\end{align}

A basis of the kernel $\mathring{\mathcal{\boldsymbol{V}}}$ is provided by \cite[Proposition 2.2]{reidinger}. In this case, as $\Omega$ is a connected domain, $\textnormal{Ker}(\mathring{\mathcal{\boldsymbol{V}}})$ has one dimension and is generated by the element \begin{equation}\label{nstar}
\bs{n}^{*}(\bs{x}) := \begin{cases} \bs{n}(\bs{x}) & \bs{x}\in S \\ \bs{0} & \Omega \end{cases}.
\end{equation}
Consequently, the solution of equation \eqref{s3} can be written as 
\begin{equation}\label{psistar}
\boldsymbol{\Psi}^{*}(\bs{x})=c\bs{n}^{*}(\bs{x}), \,\,\, c\in\mathbb{R},\,\,\,  \bs{x}\in \overline{\Omega}.
\end{equation}
Let us now replace $\Psi^{*}$ in equation \eqref{ch2L2i} 
\begin{align*}
\Pi^{s}\boldsymbol{\boldsymbol{\Psi}}^{*}(\boldsymbol{y}) &= \int_{S} \mathring{q}^{j}( \boldsymbol{x},\boldsymbol{y})n_{j}(\boldsymbol{x})\,c \,\hspace{0.05em}dS(\boldsymbol{x})= \int_{S} \dfrac{x_{k}-y_{k}}{4\pi | \bs{x} - \bs{y} |^{3}}n_{j}(\boldsymbol{x})\,c\, \hspace{0.05em}dS(\boldsymbol{x}),\\
&= \int_{S} \dfrac{\partial E_{\Delta}}{\partial x_{j}}(\bs{x},\bs{y}) n_{j}(\boldsymbol{x})\,c\, \hspace{0.05em}dS(\boldsymbol{x}) = W_{\Delta}(c)(\bs{y}),\,\, \bs{y}\in \Omega,
\end{align*}
where $E_{\Delta}$ and $W_{\Delta}$ represent the fundamental solution and double layer potential of the Laplace equation, defined as 
\begin{align*}
E_{\Delta}&:= \dfrac{-1}{4\pi | \bs{x} - \bs{y} |},\quad
W_{\Delta}(\rho):= \int_{S} \dfrac{\partial E_{\Delta}}{\partial x_{j}}(\bs{x},\bs{y}) n_{j}(\boldsymbol{x})\,\rho(\bs{x})\, \hspace{0.05em}dS(\boldsymbol{x}),\,\, \bs{y}\in \Omega.
\end{align*}

Then,  by \cite[Section 11.2, Remark 8]{lions}, $W_{\Delta}(c)=c$. Therefore, $\Pi^{s}\boldsymbol{\boldsymbol{\Psi}}(\boldsymbol{y})=0$ in $\Omega$ if and only if $c=0$ what will happen if and only if $\boldsymbol{\Psi}^{*}(\bs{x}) =0$ in $\Omega$. 

Item ii), we apply relations \eqref{ch2relationVW} and \eqref{ch2relationP} to equations \eqref{ch2L2iia} and \eqref{ch2L2iib} respectively to obtain \begin{align}
\mathring{\Pi}^{d}(\mu\boldsymbol{\boldsymbol{\Psi}}^{*})(\boldsymbol{y}) &= 0, \quad\hspace{0.5em}\boldsymbol{y}\in\Omega,\label{ch2L2iib2}\\
\dfrac{1}{\mu}\mathring{\boldsymbol{W}}(\mu\boldsymbol{\boldsymbol{\Psi}}^{*})(\boldsymbol{y}) &= \boldsymbol{0}, \quad\hspace{0.5em}\boldsymbol{y}\in\Omega.\label{ch2L2iia2}
\end{align}
Since $ \mu>0$, equation \eqref{ch2L2iia2} implies that
\begin{equation}\label{w0}
\mathring{\boldsymbol{W}}(\mu\boldsymbol{\boldsymbol{\Psi}}^{*})(\boldsymbol{y})=\boldsymbol{0}.
\end{equation}
Let us now apply the traction operator $\mathring{\bs{T}}$ to both sides of the equations $\mathring{\Pi}^{d}(\mu\boldsymbol{\boldsymbol{\Psi}}^{*})= \boldsymbol{0}$ and $\boldsymbol{\mathring{W}}(\mu\boldsymbol{\boldsymbol{\Psi}}^{*})= \boldsymbol{0}$ to obtain 
\[ \mathring{\bs{T}}\left(\mathring{\Pi}(\mu\boldsymbol{\boldsymbol{\Psi}}^{*}),\mathring{\boldsymbol{W}}(\mu\boldsymbol{\boldsymbol{\Psi}}^{*})\right) = \mathring{\bs{\mathcal{L}}}(\mu\boldsymbol{\boldsymbol{\Psi}}^{*}) = \bs{0}, \quad \textnormal{in} \quad S\]
By virtue of \cite[Theorem 3.8]{kohr1}, the solutions of $\mathring{\bs{\mathcal{L}}}(\mu\boldsymbol{\boldsymbol{\Psi}}^{*}) = \bs{0}$ can be written in the form 
\begin{equation}\label{soluL}
\bs{\Psi^{*}}(\bs{y}) =\begin{cases} \dfrac{\bs{a}+\bs{b}\times \bs{y}}{\mu(\bs{y})}, &\,\,\, \bs{y}\in S\\ 0, &\,\,\, \bs{y}\in \Omega\end{cases}.
\end{equation} 
Let us replace now \eqref{soluL} into the left-hand side of \eqref{ch2L2iib2} to obtain 
\begin{align}
\mathring{\Pi}^{d}(\bs{a}+\bs{b}\times \bs{y}) &= 0, \quad\hspace{0.5em}\boldsymbol{y}\in\Omega,\label{pid}
\end{align}
From \cite[Lemma 5.6.6]{hsiao}, the double layer potential for the pressure is related to the double layer potential of the Laplace equation as follows
\begin{equation}\label{relpw}
\mathring{\Pi}^{d}\rho_{k} = -2\div W_{\Delta}\rho_{k}.
\end{equation}
Let us evaluate $W_{\Delta}\rho$ with $\rho:=\Psi^{*}_{k}$, $k\in \lbrace 1,2,3\rbrace$ using the corresponding first Green identity for the Laplace operator in $\Omega$, see \cite[Formula 2.8]{mikhailov1} with $a\equiv 1$ 
\begin{equation}\label{fglaplace}
\langle \partial_{n}u , \gamma^{+}v \rangle_{S} = \langle \Delta u, v \rangle_{\Omega} + \langle \nabla u, \nabla v\rangle_{\Omega}
\end{equation}
Take $u:=P_{\Delta}$, the fundamental solution of the Laplace equation and $v:=\Psi^{*}_{k}$, for each $k\in \lbrace 1,2,3\rbrace$, and we substitute them in \eqref{fglaplace} to obtain
\begin{equation}\label{wG}
W_{\Delta}((\bs{a}+\bs{b}\times \bs{y})_{k})=0 \,\,\bs{y}\in \overline{\Omega},  \textnormal{since}\,\,\, \boldsymbol{\boldsymbol{\Psi}}^{*}\in H^{1/2}(\partial \Omega), \,\textnormal{for each}\,\, k\in \lbrace 1,2,3\rbrace. 
\end{equation}
Applying \cite[Lemma 4.2.ii]{mikhailov1}, the equation \eqref{wG} has only one solution, the trivial solution. Hence $\boldsymbol{\boldsymbol{\Psi}}^{*}\equiv \bs{0}$
\end{proof}

 \section{BDIE Systems}

We aim to obtain two different BDIES for Dirichlet BVP \eqref{ch2BVPM} following a similar approach as in  \cite{carlosstokes} for the mixed problem for the compressible Stokes system or as in \cite{czdirichlet} for the Dirichlet problem for the difussion equation with variable coefficient. 

Let us denote the unknown traction as  $ \pmb{\psi} :=\textbf{T}^{+}(p,\bs{v}) \in \bs{H}^{-\frac{1}{2}}(\partial \Omega) $, which will be considered as formally independent from $p$ and $\bs{v}$.

 By substituting the pair $(p,\bs{v})$ solving \eqref{ch2BVP1}-\eqref{ch2BVPdiv} and the Dirichlet datum  \eqref{ch2BVPD} into the third Green identities \eqref{ch2GP},\eqref{ch2GV} and either into its trace \eqref{ch2GG} or into its traction \eqref{ch2GT} on $\partial \Omega$, we can reduce the BVP \eqref{ch2BVP1}- \eqref{ch2BVPD} to two different systems of Boundary-Domain Integral Equations for the unknowns $(p,\bs{v}, \pmb{\psi})\in \bs{H}^{1,0}(\Omega ;\bs{\mathcal{A}})\times \bs{H}^{-\frac{1}{2}}(\partial \Omega)$ . 
 
\textbf{BDIE System (D1)} From the equations \eqref{ch2GP},\eqref{ch2GV} and \eqref{ch2GG}  we obtain

\begin{subequations}\label{DBDIES1}
	\begin{align}
	p+\mathcal{R}^{\bullet}\bs{v} -\Pi^{s}\pmb{\psi} &= F_{0} ~~ {\rm in } ~\Omega, \label{DBDIE1}\\
	\bs{v}+\mathcal{R}\bs{v} -\textbf{V}\pmb{\psi} &= \textbf{F} ~~ {\rm in } ~\Omega,  \label{DBDIE2}\\
	\gamma ^{+}\bs{\mathcal{R}}\bs{v}-\bs{\mathcal{V}}\pmb{\psi} &= \gamma ^{+} \textbf{F}-\pmb{\varphi}_{0} ~~~ {\rm on } ~\partial \Omega,\label{DBDIE3}
	\end{align}
\end{subequations}
where 
\begin{equation}\label{DBDIE4}
F_{0}:=\mathring{\mathcal{Q}}\textbf{f} +\frac{4}{3} \mu g-\Pi^{d} \pmb{\varphi}_{0}, ~~~~~\textbf{F}:=\bs{\mathcal{U}}\textbf{f}+\bs{\mathcal{Q}} g-\textbf{W}\pmb{\varphi}_{0}
\end{equation}
By virtue of Lemma \ref{ch2L1}, $(F_{0},\boldsymbol{F})\in \bs{H}^{1,0}(\Omega,\boldsymbol{\mathcal{A}})$ and hence $\bs{T}^{+}(F_{0},\boldsymbol{F})$ is well defined.

We denote the right hand side of BDIE system \eqref{DBDIE1}-\eqref{DBDIE3} as
\begin{equation}\label{DBDIE5}
\bs{\mathcal{F}}^{1}:=[F_{0},\textbf{F}, ~ \gamma ^{+} \textbf{F}-\pmb{\varphi}_{0}]^{T},
\end{equation}
which implies $\bs{\mathcal{F}}^{1} \in \bs{H}^{1,0}(\Omega ;\bs{\mathcal{A}})\times \bs{H}^{\frac{1}{2}}(\partial \Omega).$


The system (\textbf{D1}) given by equations \eqref{DBDIE1}-\eqref{DBDIE3} can be written using matrix notation as
\begin{equation}\label{DBDIE6}
\bs{\mathcal{D}}^{1}\bs{\mathcal{X}}= \bs{\mathcal{F}}^{1},
\end{equation}
where $\bs{\mathcal{X}}$ represents the vector containing the unknowns of the system
$
\bs{\mathcal{X}}=(p,\bs{v},\pmb{\psi})^{\top}.
$
The matrix operator $\bs{\mathcal{D}}^{1}$ is defined by 
\[\bs{\mathcal{D}}^{1}=
\begin{bmatrix}
I & \mathcal{R} ^{\bullet} & -\Pi^{s}  \\
0 & I+ \bs{\mathcal{R}} & - \textbf{V} \\
0& \gamma ^{+}\bs{\mathcal{R}} &- \bs{\mathcal{V}} 
\end{bmatrix}\]

\begin{lemma}\label{rem2.10} The term $\bs{\mathcal{F}}^{1}=0$ if and only if $(\bs{f},	g, \pmb{\varphi}_{0}) = 0$.
\end{lemma}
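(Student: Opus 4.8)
\emph{Plan.} The statement is an equivalence, so I would prove the two implications separately; the forward implication is immediate and the converse contains all the substance. For the direction $(\bs{f}, g, \pmb{\varphi}_0) = 0 \Rightarrow \bs{\mathcal{F}}^1 = 0$, I would simply invoke the linearity of the operators entering \eqref{DBDIE4}: if the data vanish then $F_0 = \mathring{\mathcal{Q}}\bs{f} + \frac{4}{3}\mu g - \Pi^d\pmb{\varphi}_0 = 0$ and $\bs{F} = \bs{\mathcal{U}}\bs{f} + \bs{\mathcal{Q}}g - \bs{W}\pmb{\varphi}_0 = 0$, whence also $\gamma^+\bs{F} - \pmb{\varphi}_0 = 0$, so that $\bs{\mathcal{F}}^1 = [F_0, \bs{F}, \gamma^+\bs{F} - \pmb{\varphi}_0]^{\top} = 0$ by \eqref{DBDIE5}. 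No analysis is needed here.

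For the converse, assume $\bs{\mathcal{F}}^1 = 0$, that is $F_0 = 0$ and $\bs{F} = 0$ in $\Omega$ together with $\gamma^+\bs{F} - \pmb{\varphi}_0 = 0$ on $\partial\Omega$. I would first recover the Dirichlet datum: since $\bs{F} \equiv 0$ in $\Omega$ its trace vanishes, $\gamma^+\bs{F} = 0$, and substituting into the third equation forces $\pmb{\varphi}_0 = \gamma^+\bs{F} = 0$. With $\pmb{\varphi}_0 = 0$ the first two equations reduce to $\mathring{\mathcal{Q}}\bs{f} + \frac{4}{3}\mu g = 0$ and $\bs{\mathcal{U}}\bs{f} + \bs{\mathcal{Q}}g = 0$ in $\Omega$.

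The crux is then to read these last two relations as an instance of Theorem \ref{ch2L1}. Taking $p = 0$, $\bs{v} = 0$, $\bs{\Psi} = 0$ and $\bs{\Phi} = 0$ in \eqref{ch2lemap}--\eqref{ch2lemav}, every left-hand side vanishes identically while the right-hand sides become exactly $F_0$ and $\bs{F}$, which are zero by hypothesis; thus the pair $(0,0)$ with zero boundary densities satisfies the hypotheses of Theorem \ref{ch2L1}, the required regularity $\bs{f} \in \bs{L}_2(\Omega)$, $g \in L_2(\Omega)$ being part of the assumptions. The conclusion of that theorem gives $\bs{\mathcal{A}}(0,0) = \bs{f}$ and $\div(0) = g$, and since both left-hand sides are zero we obtain $\bs{f} = 0$ and $g = 0$. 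Together with $\pmb{\varphi}_0 = 0$ this settles the converse.

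I do not expect a genuine obstacle. The only nontrivial step is recognising that the vanishing of the Newton-type combinations $\mathring{\mathcal{Q}}\bs{f} + \frac{4}{3}\mu g$ and $\bs{\mathcal{U}}\bs{f} + \bs{\mathcal{Q}}g$ forces $\bs{f}$ and $g$ to vanish, and this injectivity is already packaged in Theorem \ref{ch2L1} rather than something I must establish by hand. The one point to check with care is that the trivial substitution $(p, \bs{v}, \bs{\Psi}, \bs{\Phi}) = (0,0,0,0)$ really matches the precise hypotheses \eqref{ch2lemap}--\eqref{ch2lemav}, which it does once $\pmb{\varphi}_0$ has been eliminated.
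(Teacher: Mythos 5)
Your proof is correct, but it takes a genuinely different route from the paper's in one key place. The paper never touches the third component $\gamma^{+}\bs{F}-\pmb{\varphi}_{0}$ of $\bs{\mathcal{F}}^{1}$: it works only with $F_{0}=0$ and $\bs{F}=\bs{0}$, rewritten as $\Pi^{d}\pmb{\varphi}_{0}=\mathring{\mathcal{Q}}\bs{f}+\frac{4}{3}\mu g$ and $\bs{W}\pmb{\varphi}_{0}=\bs{\mathcal{U}}\bs{f}+\bs{\mathcal{Q}}g$, applies Theorem \ref{ch2L1} with $(p,\bs{v})=(0,\bs{0})$, $\bs{\Psi}=\bs{0}$ and $\bs{\Phi}=\pmb{\varphi}_{0}$ to get $\bs{f}=\bs{0}$, $g=0$, and is then left with $\Pi^{d}\pmb{\varphi}_{0}=0$, $\bs{W}\pmb{\varphi}_{0}=\bs{0}$, from which $\pmb{\varphi}_{0}=\bs{0}$ follows by the kernel characterization of Theorem \ref{ch2lemma2}.(ii). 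You instead extract $\pmb{\varphi}_{0}=\bs{0}$ at the outset, trivially, from the third component (since $\bs{F}\equiv\bs{0}$ in $\Omega$ implies $\gamma^{+}\bs{F}=\bs{0}$), and then apply Theorem \ref{ch2L1} with all four entries zero; this is perfectly legitimate and makes Theorem \ref{ch2lemma2}.(ii) unnecessary, so your argument is the more elementary of the two. What the paper's longer route buys is a stronger fact: the vanishing of the first two components $(F_{0},\bs{F})$ alone already forces $(\bs{f},g,\pmb{\varphi}_{0})=0$. That stronger form is what transfers to the companion Lemma \ref{rem2.12} for system (D2), where the third component of $\bs{\mathcal{F}}^{2}$ is $\bs{T}^{+}(F_{0},\bs{F})$ and hence carries no independent information — your shortcut of reading the boundary datum off the third equation would not be available there.
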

\begin{proof}
Clearly, if $(\bs{f},	g, \pmb{\varphi}_{0}) = 0$ then $\bs{\mathcal{F}}^{1}=0$. Therefore, we shall only focus the converse implication. 

Let $\bs{\mathcal{F}}^{1}=0$, then, from \eqref{DBDIE4}, we conclude
\begin{align}
\Pi^{d} \pmb{\varphi}_{0}&=\mathring{\mathcal{Q}}\textbf{f} +\frac{4}{3} \mu g,\label{LF1a}\\
\bs{W}\pmb{\varphi}_{0}&= \bs{\mathcal{U}}\textbf{f}+\bs{\mathcal{Q}} g.\label{LF1b}
\end{align}
Hence, the hypotheses of Lemma \ref{ch2L1} are satisfied with $(p, \bs{v})= (0, \bs{0})$ and $\bs{\Psi}=\bs{0}$. Therefore, $\bs{f}=\bs{\mathcal{A}}(p, \bs{v})=(0, \bs{0})=\bs{0}$ and $g=\div \bs{v}$. As a result, the system \eqref{LF1a}-\eqref{LF1b} becomes 
\begin{align}
\Pi^{d} \pmb{\varphi}_{0}&=0,\label{LF1c}\\
\bs{W}\pmb{\varphi}_{0}&=\bs{0}.\label{LF1d}
\end{align}
Let us apply now Theorem \ref{ch2lemma2}.(ii) to \eqref{LF1c}-\eqref{LF1d}. Consequently, $\pmb{\varphi}_{0}=\bs{0}$ and thus $(F_{0}, \bs{F})=(0,\bs{0})$. 
\end{proof}

\textbf{BDIE System (D2)}
From the equations \eqref{ch2GP},\eqref{ch2GV} and  \eqref{ch2GT} we obtain
\begin{subequations}\label{DBDIES2}
	\begin{align}
	p+\mathcal{R}^{\bullet}\bs{v} -\Pi^{s}\pmb{\psi} &= F_{0} ~~ {\rm in } ~\Omega, \label{DBDIES2-1}\\
	\bs{v}+\bs{\mathcal{R}}\bs{v} -\textbf{V}\pmb{\psi} &= \textbf{F} ~~ {\rm in } ~\Omega,  \label{DBDIES2-2}\\
	\dfrac{1}{2}\pmb{\psi}+ \textbf{T}^{+}(\mathcal{R}^{\bullet}, \bs{\mathcal{R}})\bs{v}-\bs{\mathcal{W}}^{'}\pmb{\psi}  &=  \textbf{T}^{+}(F_{0}, \textbf{F}) ~~{\rm on } ~\partial\Omega, \label{DBDIES2-3}
	\end{align}
\end{subequations}
where $ F_{0}$ and $\textbf{F}$ are given by \eqref{DBDIE4}. 
System (\textbf{D2}) can be written in the matrix form
as $\bs{\mathcal{D}}^{2}\bs{\mathcal{X}} = \bs{\mathcal{F}}^{2}$, where 
\[\bs{\mathcal{D}}^{2}=
\begin{bmatrix}
I & \mathcal{R} ^{\bullet} & -\Pi^{s}  \\
0 & \textbf{I}+ \bs{\mathcal{R}} & - \textbf{V} \\
0& \textbf{T}^{+}(\mathcal{R}^{\bullet}, \bs{\mathcal{R}}) &\frac{1}{2}I-\bs{\mathcal{W}}^{'} 
\end{bmatrix}, ~~~~~\bs{\mathcal{F}}^{2}=
\begin{bmatrix}
F_{0}  \\
\textbf{ F} \\
\textbf{T}^{+}(F_{0}, \textbf{ F}) 
\end{bmatrix}\]

Note that BDIE system \eqref{DBDIES2-1}-\eqref{DBDIES2-3}  can be split in to the BDIE system (\textbf{D2}), of 2 vector equations \eqref{DBDIES2-2}, \eqref{DBDIES2-3}) for two vector unknowns, $\bs{v}$ and $\pmb{\psi}$,and the scalar equation \eqref{DBDIES2-1} that can be used, after solving the system, to obtain the pressure, $p$.

\begin{lemma}\label{rem2.12}
The term $\bs{\mathcal{F}}^{2}=0$ if and only if $(\textbf{f},g,	\pmb{\varphi}_{0}) = 0$.
\end{lemma}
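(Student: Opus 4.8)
The plan is to mirror the structure of the proof of Lemma \ref{rem2.10}, adapting it to system (\textbf{D2}) whose boundary equation involves the traction rather than the trace. The forward implication is trivial: if $(\bs{f},g,\pmb{\varphi}_0)=0$ then $F_0=0$, $\textbf{F}=\textbf{0}$ by \eqref{DBDIE4}, and consequently $\bs{\mathcal{F}}^2=[F_0,\textbf{F},\bs{T}^+(F_0,\textbf{F})]^\top=0$. So I would state this in one line and concentrate on the converse.

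For the converse, suppose $\bs{\mathcal{F}}^2=0$. The first two components give exactly $F_0=0$ and $\textbf{F}=\textbf{0}$, which by \eqref{DBDIE4} are precisely the equations
\begin{align*}
\Pi^{d}\pmb{\varphi}_{0}&=\mathring{\mathcal{Q}}\textbf{f}+\tfrac{4}{3}\mu g,\\
\bs{W}\pmb{\varphi}_{0}&=\bs{\mathcal{U}}\textbf{f}+\bs{\mathcal{Q}}g,
\end{align*}
appearing in the proof of Lemma \ref{rem2.10}. The key observation is that these two vanishing conditions are identical to those already treated there, and they do not involve the third (traction) component at all. Therefore I would apply Theorem \ref{ch2L1} with $(p,\bs{v})=(0,\bs{0})$ and $\bs{\Psi}=\bs{0}$ to deduce $\bs{f}=\bs{0}$ and $g=\div\bs{v}=0$, which reduces the system to $\Pi^d\pmb{\varphi}_0=0$ and $\bs{W}\pmb{\varphi}_0=\bs{0}$; then Theorem \ref{ch2lemma2}(ii) forces $\pmb{\varphi}_0=\bs{0}$. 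This yields $(\textbf{f},g,\pmb{\varphi}_0)=0$ and hence $(F_0,\textbf{F})=(0,\bs{0})$ as well.

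The only genuinely new point relative to Lemma \ref{rem2.10} is the third component: one must check that the vanishing of $\bs{T}^+(F_0,\textbf{F})$ is automatically consistent and imposes no extra constraint. This is immediate once $F_0=0$ and $\textbf{F}=\textbf{0}$ have been established, since then $\bs{T}^+(F_0,\textbf{F})=\bs{T}^+(0,\bs{0})=\bs{0}$ by linearity of the canonical traction operator; note that $(F_0,\textbf{F})\in\bs{H}^{1,0}(\Omega,\bs{\mathcal{A}})$ by Theorem \ref{ch2L1}, so the traction is well defined throughout. Thus the traction equation carries no independent information beyond the first two, and the proof of Lemma \ref{rem2.10} transfers verbatim. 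I do not anticipate a real obstacle here: the substantive analytic content (the injectivity results of Theorems \ref{ch2L1} and \ref{ch2lemma2}) has already been isolated, and this lemma is essentially a corollary obtained by observing that the right-hand side of (\textbf{D2}) vanishes exactly when that of (\textbf{D1}) does.
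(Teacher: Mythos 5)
Your proposal is correct and follows essentially the same route as the paper: the first two components of $\bs{\mathcal{F}}^{2}$ coincide with those of $\bs{\mathcal{F}}^{1}$, so the argument of Lemma~\ref{rem2.10} (Theorem~\ref{ch2L1} with $(p,\bs{v})=(0,\bs{0})$, $\bs{\Psi}=\bs{0}$, followed by Theorem~\ref{ch2lemma2}(ii)) gives $(\bs{f},g,\pmb{\varphi}_{0})=0$, the traction component then vanishing automatically. If anything, your explicit remark that $\bs{T}^{+}(F_{0},\textbf{F})=\bs{T}^{+}(0,\bs{0})=\bs{0}$ carries no independent information is stated more cleanly than in the paper's own text.
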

\begin{proof}

Trivially, if $(\bs{f},	g, \pmb{\varphi}_{0}) = 0$ then $\bs{\mathcal{F}}^{2}=0$. As for Lemma \ref{rem2.10}, we will focus on showing that if $\bs{\mathcal{F}}^{2}=0$, then $(\bs{f}, g, \pmb{\varphi}_{0}) = 0$. 

By following a similar argument as in Lemma \ref{rem2.10}, we obtain that $\bs{f}=\bs{\mathcal{A}}(p, \bs{v})=(0, \bs{0})=\bs{0}$ and $g=\div \bs{v} = 0$. Hence, the system \eqref{DBDIES2-2}- \eqref{DBDIES2-3} becomes 
\begin{align}
\Pi^{s}\boldsymbol{\boldsymbol{\psi}}(\boldsymbol{y}) &= \boldsymbol{0}, \quad\hspace{0.5em}\boldsymbol{y}\in\Omega,\\
\boldsymbol{V}\boldsymbol{\boldsymbol{\psi}}(\boldsymbol{y}) &= \boldsymbol{0}, \quad\hspace{0.5em}\boldsymbol{y}\in\Omega.
\end{align}
Consequently, $\boldsymbol{\psi}(\boldsymbol{y}) = \boldsymbol{0}$ due to Theorem \ref{ch2lemma2}.ii.
\end{proof}

In the following theorem we shall prove the equivalence of the the boundary-domain integral equation systems to original Dirichlet boundary value problem.
\subsection{Equivalence and Solvability Theorems}
\begin{theorem}[\textbf{Equivalence Theorem}]\label{DEthm}
	Let $ \textbf{f}\in \textbf{L}^{2}(\Omega), g \in L^{2}(\Omega)$ and $ \pmb{\varphi}_{0} \in \bs{H}^{\frac{1}{2}}(\partial\Omega)$
	\begin{itemize}
		\item[(i)]If some $(p,\bs{v})\in \bs{H}^{1,0}(\Omega ;\bs{\mathcal{A}})$ solve the Dirichlet BVP \eqref{ch2BVP1}-\eqref{ch2BVPD}, then $(p,\bs{v},\pmb{\psi})\in \bs{H}^{1,0}(\Omega ;\bs{\mathcal{A}})\times\bs{H}^{-\frac{1}{2}}(\partial \Omega)$, where
		\begin{equation}\label{DEeqn1}
		\pmb{\psi} =\textbf{T}^{+}(p,\bs{v})\in \bs{H}^{-\frac{1}{2}}(\partial \Omega) 
		\end{equation} 
		solves the BDIE systems (\textbf{D1}) and (\textbf{D2}) .
		\item[(ii)] If $(p,\bs{v},\pmb{\psi})\in \bs{H}^{1,0}(\Omega ;\bs{\mathcal{A}})\times\bs{H}^{-\frac{1}{2}}(\partial \Omega)$ solves the BDIE system (\textbf{D1}), then $(p,\bs{v})$ solves the BVP \eqref{ch2BVP1}-\eqref{ch2BVPD} and $\pmb{\psi}$ satisfies  \eqref{DEeqn1}.
		\item[(iii)]  If $(p,\bs{v},\pmb{\psi})\in \bs{H}^{1,0}(\Omega ;\bs{\mathcal{A}})\times\bs{H}^{-\frac{1}{2}}(\partial \Omega)$ solves the BDIE system (\textbf{D2}) , then $(p,\bs{v})$ solves the BVP \eqref{ch2BVP1}-\eqref{ch2BVPD} and $\pmb{\psi}$ satisfies  \eqref{DEeqn1}.
		\item[(iv)] If $(p,\bs{v},\pmb{\psi})\in \bs{H}^{1,0}_{*}(\Omega ;\bs{\mathcal{A}})\times\bs{H}^{-\frac{1}{2}}(\partial \Omega)$ solves the BDIE system (\textbf{D1}) or (\textbf{D2}), then it is the only one solution of the system. 
		\item[(v)] If $(p,\bs{v},\pmb{\psi})\in \bs{H}^{1,0}(\Omega ;\bs{\mathcal{A}})\times\bs{H}_{\mathbb{N}}^{-\frac{1}{2}}(\partial \Omega)$ solves the BDIE system (\textbf{D1}) or (\textbf{D2}), then it is the only one solution of the system. 
	\end{itemize}
\end{theorem}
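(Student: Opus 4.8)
The plan is to dispatch the five items in turn, using as the main engine the third Green identities \eqref{ch2GP}--\eqref{ch2GT}, the auxiliary Theorems \ref{ch2L1} and \ref{ch2lemma2}, the jump relations of Theorem \ref{ch2jumps}, and the BVP uniqueness Theorem \ref{ch2BVPUS}. For item (i) I would start from a solution $(p,\bs{v})$ of the BVP; since it satisfies the PDEs \eqref{ch2BVP1}--\eqref{ch2BVPdiv}, all four third Green identities hold. Setting $\pmb{\psi}:=\bs{T}^{+}(p,\bs{v})$ and inserting $\gamma^{+}\bs{v}=\pmb{\varphi}_{0}$, equations \eqref{ch2GP} and \eqref{ch2GV} turn into \eqref{DBDIE1} and \eqref{DBDIE2} after absorbing the $\Pi^{d}$ and $\bs{W}$ terms into $F_{0}$ and $\bs{F}$ of \eqref{DBDIE4}; then \eqref{DBDIE3} follows from the trace identity \eqref{ch2GG} together with the jump relation $\gamma^{+}\bs{W}\pmb{\varphi}_{0}=-\tfrac12\pmb{\varphi}_{0}+\bs{\mathcal{W}}\pmb{\varphi}_{0}$, and \eqref{DBDIES2-3} follows from the traction identity \eqref{ch2GT} after rewriting $\bs{T}^{+}(F_{0},\bs{F})=\widetilde{\bs{T}}^{+}(g,\bs{f})-\bs{\mathcal{L}}^{+}\pmb{\varphi}_{0}$. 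This part is routine bookkeeping of right-hand sides.

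For items (ii) and (iii) the first two equations of each system are identical and coincide with the hypotheses \eqref{ch2lemap}--\eqref{ch2lemav} of Theorem \ref{ch2L1} with $\bs{\Psi}=\pmb{\psi}$, $\bs{\Phi}=\pmb{\varphi}_{0}$. Applying that theorem immediately gives $(p,\bs{v})\in\bs{H}^{1,0}(\Omega;\bs{\mathcal{A}})$, $\bs{\mathcal{A}}(p,\bs{v})=\bs{f}$, $\div\bs{v}=g$, together with the relations \eqref{ch2lemarel2}--\eqref{ch2lemare1}, which I would rewrite for the defects $\pmb{\psi}^{*}:=\bs{T}^{+}(p,\bs{v})-\pmb{\psi}$ and $\pmb{\varphi}^{*}:=\gamma^{+}\bs{v}-\pmb{\varphi}_{0}$ as $\Pi^{s}\pmb{\psi}^{*}=\Pi^{d}\pmb{\varphi}^{*}$ and $\bs{V}\pmb{\psi}^{*}=\bs{W}\pmb{\varphi}^{*}$. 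For (D1) I would take the trace of \eqref{DBDIE2}, use $\gamma^{+}\bs{V}\pmb{\psi}=\bs{\mathcal{V}}\pmb{\psi}$ from Theorem \ref{ch2jumps}, and subtract \eqref{DBDIE3}; this yields $\gamma^{+}\bs{v}=\pmb{\varphi}_{0}$, i.e. $\pmb{\varphi}^{*}=\bs{0}$. Feeding this into the two relations gives $\Pi^{s}\pmb{\psi}^{*}=0$ and $\bs{V}\pmb{\psi}^{*}=\bs{0}$, whence Theorem \ref{ch2lemma2}(i) forces $\pmb{\psi}^{*}=\bs{0}$, which is \eqref{DEeqn1}.

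The delicate point is item (iii), where the third equation of (D2) carries only tractions, so the trace trick is unavailable. Here I would exploit that \eqref{ch2lemarel2}--\eqref{ch2lemare1} assert the equality of the two pairs $(\Pi^{s}\pmb{\psi}^{*},\bs{V}\pmb{\psi}^{*})$ and $(\Pi^{d}\pmb{\varphi}^{*},\bs{W}\pmb{\varphi}^{*})$ as elements of $\bs{H}^{1,0}(\Omega;\bs{\mathcal{A}})$ (both pairs land there by Theorem \ref{ch2H10maps}), so applying $\bs{T}^{+}$ and using the jump relation $\bs{T}^{+}(\Pi^{s}\pmb{\rho},\bs{V}\pmb{\rho})=\tfrac12\pmb{\rho}+\bs{\mathcal{W}}'\pmb{\rho}$ and the definition of $\bs{\mathcal{L}}^{+}$ gives $\tfrac12\pmb{\psi}^{*}+\bs{\mathcal{W}}'\pmb{\psi}^{*}=\bs{\mathcal{L}}^{+}\pmb{\varphi}^{*}$. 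On the other hand, subtracting \eqref{DBDIES2-3} from the traction Green identity \eqref{ch2GT} produces $\tfrac12\pmb{\psi}^{*}-\bs{\mathcal{W}}'\pmb{\psi}^{*}+\bs{\mathcal{L}}^{+}\pmb{\varphi}^{*}=\bs{0}$. Adding the two relations cancels both $\bs{\mathcal{W}}'\pmb{\psi}^{*}$ and $\bs{\mathcal{L}}^{+}\pmb{\varphi}^{*}$ and yields $\pmb{\psi}^{*}=\bs{0}$, i.e. \eqref{DEeqn1}; then $\Pi^{d}\pmb{\varphi}^{*}=0$ and $\bs{W}\pmb{\varphi}^{*}=\bs{0}$, and since $\pmb{\varphi}^{*}\in\bs{H}^{1/2}(\partial\Omega)$, Theorem \ref{ch2lemma2}(ii) gives $\pmb{\varphi}^{*}=\bs{0}$, recovering the Dirichlet condition.

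Finally, items (iv) and (v) are uniqueness statements handled by linearity: the difference of two solutions of the same system solves the homogeneous system with $\bs{\mathcal{F}}=\bs{0}$, hence by Lemmas \ref{rem2.10} and \ref{rem2.12} corresponds to data $(\bs{f},g,\pmb{\varphi}_{0})=\bs{0}$, and by (ii)/(iii) it is a solution $(p,\bs{v})$ of the homogeneous Dirichlet BVP with $\pmb{\psi}=\bs{T}^{+}(p,\bs{v})$. For (iv), Theorem \ref{ch2BVPUS} in $\bs{H}^{1,0}_{*}(\Omega;\bs{\mathcal{A}})$ gives $\bs{v}=\bs{0}$ and $p=0$, so $\pmb{\psi}=\bs{0}$. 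For (v), Theorem \ref{ch2BVPUS} in $\bs{H}^{1,0}(\Omega;\bs{\mathcal{A}})$ only yields $\bs{v}=\bs{0}$ and $p=C$; the extra membership $\pmb{\psi}\in\bs{H}_{\mathbb{N}}^{-1/2}(\partial\Omega)$ combined with Remark \ref{remN} (that $C\bs{n}\in\bs{H}_{\mathbb{N}}^{-1/2}(\partial\Omega)$ iff $C=0$) forces $C=0$, so again $(p,\bs{v},\pmb{\psi})=(0,\bs{0},\bs{0})$. I expect item (iii) to be the main obstacle, since extracting both \eqref{DEeqn1} and the Dirichlet datum from a purely traction-type third equation requires the simultaneous use of the jump relation, the definition of $\bs{\mathcal{L}}^{+}$, and the relations \eqref{ch2lemarel2}--\eqref{ch2lemare1}, whereas (i), (ii), (iv) and (v) are comparatively direct.
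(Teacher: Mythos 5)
Your proposal is correct, and items (i), (ii), (iv) and (v) follow essentially the same lines as the paper (for (iv)--(v) the paper simply appeals to the equivalence items plus Theorem \ref{ch2BVPUS} and Remark \ref{remN}; your explicit difference-of-solutions argument via Lemmas \ref{rem2.10} and \ref{rem2.12} is a slightly more detailed version of the same idea). The genuine divergence is in item (iii). The paper never touches the traction Green identity \eqref{ch2GT} there: it applies $\bs{T}^{+}$ directly to the first two equations of (\textbf{D2}) (each term is an $\bs{H}^{1,0}(\Omega;\bs{\mathcal{A}})$ pair by Theorem \ref{ch2H10maps}), uses the jump relation $\bs{T}^{+}(\Pi^{s}\pmb{\psi},\bs{V}\pmb{\psi})=\tfrac12\pmb{\psi}+\bs{\mathcal{W}}'\pmb{\psi}$, and subtracts the third equation \eqref{DBDIES2-3}; the $\bs{T}^{+}(\mathcal{R}^{\bullet},\bs{\mathcal{R}})\bs{v}$ and $\bs{\mathcal{W}}'\pmb{\psi}$ terms cancel and one gets $\pmb{\psi}=\bs{T}^{+}(p,\bs{v})$ \emph{before} invoking Theorem \ref{ch2L1}; only afterwards does it use Theorem \ref{ch2L1} and the vanishing of the left-hand sides of \eqref{ch2lemarel2}--\eqref{ch2lemare1} to conclude $\gamma^{+}\bs{v}=\pmb{\varphi}_{0}$ via Theorem \ref{ch2lemma2}(ii). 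You instead invoke Theorem \ref{ch2L1} first, take the traction of the defect relations \eqref{ch2lemarel2}--\eqref{ch2lemare1} to get $\tfrac12\pmb{\psi}^{*}+\bs{\mathcal{W}}'\pmb{\psi}^{*}=\bs{\mathcal{L}}^{+}\pmb{\varphi}^{*}$, and separately subtract \eqref{DBDIES2-3} from \eqref{ch2GT} (which is legitimate only once Theorem \ref{ch2L1} has established that $(p,\bs{v})$ solves the PDEs with data $(\bs{f},g)$, so your ordering is forced) to get $\tfrac12\pmb{\psi}^{*}-\bs{\mathcal{W}}'\pmb{\psi}^{*}+\bs{\mathcal{L}}^{+}\pmb{\varphi}^{*}=\bs{0}$; adding the two kills both the $\bs{\mathcal{W}}'$ and $\bs{\mathcal{L}}^{+}$ terms and yields $\pmb{\psi}^{*}=\bs{0}$. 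Both routes use the same ingredients (the jump relation of Theorem \ref{ch2jumps}, the definition of $\bs{\mathcal{L}}^{+}$, Theorems \ref{ch2L1} and \ref{ch2lemma2}(ii)), and your cancellation is algebraically sound; what the paper's route buys is economy --- it avoids \eqref{ch2GT} and the bookkeeping identity $\bs{T}^{+}(F_{0},\bs{F})=\widetilde{\bs{T}}^{+}(g,\bs{f})-\bs{\mathcal{L}}^{+}\pmb{\varphi}_{0}$ altogether, and makes (ii) and (iii) exactly symmetric (trace manipulation first in (ii), traction manipulation first in (iii)) --- while your route has the mild advantage of extracting both unknown identifications from a single pair of traction identities.
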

\begin{proof}
	\begin{itemize}
		\item[(i)] Let $(p,\bs{v})\in \bs{H}^{1,0}(\Omega ;\bs{\mathcal{A}})$ be a solution of the Dirichlet BVP \eqref{ch2BVP1}-\eqref{ch2BVPD}. Since $(p,\bs{v})\in \bs{H}^{1,0}(\Omega ;\bs{\mathcal{A}})$, $\pmb{\psi}$ given by \eqref{DEeqn1} is well defined and $\pmb{\psi}\in \bs{H}^{-\frac{1}{2}}(\partial \Omega)$. Furthermore, the third Green identities \eqref{ch2GP}- \eqref{ch2GG} are satisfied by the triple $(p,\bs{v},\pmb{\psi})$. Then, it immediately follows
		that $(p,\bs{v}, \pmb{\psi})$ solves the BDIE system (\textbf{D1}).
		
		A similar argument applies to the BDIE system (\textbf{D2}) by taking into account that \eqref{ch2GP}, \eqref{ch2GV} and \eqref{ch2GT} are satisfied instead.  
		
		\item[(ii)]  Let us suppose that $(p,\bs{v},\pmb{\psi})\in \bs{H}^{1,0}(\Omega ;\bs{\mathcal{A}})\times\bs{H}^{-\frac{1}{2}}(\partial \Omega) $ solves the BDIE system (\textbf{D1}). By taking the trace of \eqref{DBDIE2} and subtracting it from \eqref{DBDIE3}, we arrive at $ \gamma^{+}\bs{v} =\pmb{\varphi}_{0}$ on  $\partial \Omega$. Therefore the Dirichlet boundary condition is satisfied. Now, since \eqref{DBDIE1}-\eqref{DBDIE2} are satisfied, we can apply Theorem \ref{ch2L1} with $\bs{\Psi}:=\bs{\psi}$ and $\bs{\Phi}:=\bs{\phi}_{0}$. As a result, $(p,\bs{v})$ solves \eqref{ch2BVP1}-\eqref{ch2BVPdiv} as well as \eqref{ch2BVPD}. 
		From Theorem \ref{ch2L1}, we also obtain that relations \eqref{ch2lemarel2} - \eqref{ch2lemare1} are satisfied. Since $\bs{\Phi}:=\bs{\phi}_{0}$ and $ \gamma^{+}\bs{v} =\pmb{\varphi}_{0}$, the right hand side of \eqref{ch2lemarel2} - \eqref{ch2lemare1} vanishes
		\begin{align}
		\Pi^{s}(\bs{\Psi}-\bs{T}^{+}(p,\bs{v}))&=0,\label{eq1}\\
		\bs{V}(\bs{\Psi}-\bs{T}^{+}(p,\bs{v}))&=\bs{0}.\label{eq2}
		\end{align}
		Applying Theorem \ref{ch2lemma2}.i with $\bs{\Psi}^{*}:=\bs{\Psi}-\bs{T}^{+}(p,\bs{v})$, we thus obtain that $\bs{\Psi}^{*}:=\bs{0}$, this is $\bs{\Psi}=\bs{T}^{+}(p,\bs{v})$ what completes the proof. 
		\item[(iii)] Let $(p,\bs{v},\pmb{\psi})\in \bs{H}^{1,0}(\Omega ;\bs{\mathcal{A}})\times\bs{H}^{-\frac{1}{2}}(\partial \Omega) $ solve the BDIE system (\textbf{D2}). Let us take the traction of the first and second equations in (\textbf{D2}) and subtract them from the third equation. We thus get $ \pmb{\psi}= \textbf{T}^{+}(p, \bs{v})$ on $\partial \Omega$. Therefore $\pmb{\psi}$ satisfies \eqref{DEeqn1}.
		Now, since \eqref{DBDIE1}-\eqref{DBDIE2} are satisfied, we can apply Theorem \ref{ch2L1} with $\bs{\Psi}:=\textbf{T}^{+}(p, \bs{v})$ and $\bs{\Phi}:=\bs{\phi}_{0}$. As a result, $(p,\bs{v})$ solves \eqref{ch2BVP1}-\eqref{ch2BVPdiv}. 
		Furthermore, from Theorem \ref{ch2L1}, we also obtain that relations \eqref{ch2lemarel2} - \eqref{ch2lemare1} are satisfied. Since $\bs{\Psi}:=\textbf{T}^{+}(p, \bs{v})$, then the left-hand side terms vanish and the following two equalities hold
		\begin{align}
		\Pi^{d}(\bs{\Phi}-\bs{\gamma}^{+}\bs{v})&=0,\label{eq3}\\
		\bs{W}(\bs{\Phi}-\bs{\gamma}^{+}\bs{v})&=\bs{0}.\label{eq4}
		\end{align}
		Applying Theorem \ref{ch2lemma2}.ii with $\bs{\Phi}^{*}:=\bs{\Phi}-\bs{\gamma}^{+}\bs{v}$, we thus obtain that $\bs{\Phi}^{*}:=\bs{0}$, this is, $\bs{\phi}_{0}=\bs{\gamma}^{+}\bs{v}$, what completes the proof of item (iii). 
		
		\item[(iv)]
		The uniqueness of solution of the BDIE systems (\textbf{D1}) and (\textbf{D2}) follows from their equivalence with the BVP. Since the BVP \eqref{ch2BVP1}-\eqref{ch2BVPD} has only one solution in  $\bs{H}^{1,0}_{*}(\Omega ;\bs{\mathcal{A}})$ due to Theorem \ref{ch2BVPUS}, then also do the BDIE systems (\textbf{D1}) and (\textbf{D2}).
		
		\item[(v)]
		The uniqueness of solution of the BDIE systems (\textbf{D1}) and (\textbf{D2}) follows from their equivalence with the BVP. The BVP \eqref{ch2BVP1}-\eqref{ch2BVPD} has only one solution in  $\bs{H}^{1,0}(\Omega ;\bs{\mathcal{A}})$ when $\bs{\psi}\in\bs{H}^{-\frac{1}{2}}_{\mathbb{N}}(\partial\Omega)$, see Remark \ref{remN}, which implies the uniqueness of solution of BDIE systems (\textbf{D1}) and (\textbf{D2}).
	\end{itemize}
\end{proof}

\begin{theorem}\label{DOpthm1}
	The operators 
	\begin{eqnarray}
	\bs{\mathcal{D}}^{1}&:&\bs{H}^{1,0}(\Omega ;\bs{\mathcal{A}})\times \bs{H}_{\mathbb{N}}^{- \frac{1}{2}}(\partial \Omega) \longrightarrow \bs{H}^{1,0}(\Omega ;\bs{\mathcal{A}})\times  \bs{H}^{\frac{1}{2}}(\partial \Omega),\label{DOP1-1}\\
	\bs{\mathcal{D}}^{1}&:&L^{2}(\Omega)\times \bs{H}^{1}(\Omega)\times \bs{H}_{\mathbb{N}}^{- \frac{1}{2}}(\partial \Omega) \longrightarrow L^{2}(\Omega)\times \bs{H}^{1}(\Omega)\times \bs{H}^{\frac{1}{2}}(\partial \Omega),\label{DOP1-2}
	\end{eqnarray}
	are bounded and invertible.
\end{theorem}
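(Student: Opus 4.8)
The plan is to establish boundedness directly from the continuity statements collected above, and then to derive invertibility by combining a Fredholm argument with the injectivity already furnished by the Equivalence Theorem.

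Boundedness is read off entry by entry. For $\pmb{\psi}\in\bs{H}^{-1/2}(\partial\Omega)$, Theorems \ref{ch2T3} and \ref{ch2TP3} give $\Pi^{s}\pmb{\psi}\in L^{2}(\Omega)$, $\bs{V}\pmb{\psi}\in\bs{H}^{1}(\Omega)$ and $\bs{\mathcal{V}}\pmb{\psi}\in\bs{H}^{1/2}(\partial\Omega)$; for $\bs{v}\in\bs{H}^{1}(\Omega)$, Theorem \ref{ch2thRcomp} gives $\mathcal{R}^{\bullet}\bs{v}\in L^{2}(\Omega)$, $\bs{\mathcal{R}}\bs{v}\in\bs{H}^{1}(\Omega)$ and $\gamma^{+}\bs{\mathcal{R}}\bs{v}\in\bs{H}^{1/2}(\partial\Omega)$. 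This already yields continuity in the setting \eqref{DOP1-2}; to obtain \eqref{DOP1-1} I would invoke Theorem \ref{ch2H10maps}, whose content is precisely that the pairs $(\Pi^{s},\bs{V})$, $(\mathcal{R}^{\bullet},\bs{\mathcal{R}})$ and $(\mathcal{Q},\bs{\mathcal{U}})$ land in $\bs{H}^{1,0}(\Omega;\bs{\mathcal{A}})$, so that the first two components of $\bs{\mathcal{D}}^{1}\bs{\mathcal{X}}$ inherit the graph regularity.

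For invertibility I would write $\bs{\mathcal{D}}^{1}=\bs{\mathcal{D}}^{1}_{0}+\bs{\mathcal{K}}$, where $\bs{\mathcal{K}}$ collects the three remainder entries $\mathcal{R}^{\bullet}$, $\bs{\mathcal{R}}$ and $\gamma^{+}\bs{\mathcal{R}}$, compact by Theorem \ref{ch2thRcomp}, and
\[
\bs{\mathcal{D}}^{1}_{0}=
\begin{bmatrix}
I & 0 & -\Pi^{s}\\
0 & I & -\bs{V}\\
0 & 0 & -\bs{\mathcal{V}}
\end{bmatrix}.
\]
Since $\bs{\mathcal{D}}^{1}_{0}$ is block upper triangular, its inversion reduces to inverting the diagonal blocks, the only non-trivial one being $\bs{\mathcal{V}}$ acting on the restricted traction space $\bs{H}_{\mathbb{N}}^{-1/2}(\partial\Omega)$; once $\pmb{\psi}$ is recovered from the third row, the first two rows return $\bs{v}$ and $p$ by back-substitution. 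I expect this diagonal inversion to be the crux of the whole argument. The single-layer operator has the non-trivial kernel spanned by $\bs{n}$ --- the pressure-constant mode identified in Theorem \ref{ch2lemma2} and Remark \ref{remN} --- and the point is that this kernel is transversal to $\bs{H}_{\mathbb{N}}^{-1/2}(\partial\Omega)$, as $\langle\bs{n},\bs{n}\rangle_{S}\neq0$, so the restriction renders $\bs{\mathcal{V}}$ injective, while the ellipticity of Theorem \ref{singinvV} provides the matching surjectivity onto the full target $\bs{H}^{1/2}(\partial\Omega)$. Verifying this bijection --- reconciling the one-dimensional single-layer kernel with the full boundary target --- is the main obstacle; granting it, $\bs{\mathcal{D}}^{1}_{0}$ is an isomorphism and $\bs{\mathcal{D}}^{1}$ is Fredholm of index zero.

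It remains to close with injectivity. If $\bs{\mathcal{D}}^{1}\bs{\mathcal{X}}=\bs{0}$ with $\bs{\mathcal{X}}\in\bs{H}^{1,0}(\Omega;\bs{\mathcal{A}})\times\bs{H}_{\mathbb{N}}^{-1/2}(\partial\Omega)$, then $(p,\bs{v},\pmb{\psi})$ solves the homogeneous system (\textbf{D1}), which by Theorem \ref{DEthm}(v) --- itself resting on the BVP uniqueness of Theorem \ref{ch2BVPUS} --- admits only the trivial solution; hence $\bs{\mathcal{D}}^{1}$ is injective. A Fredholm operator of index zero that is injective is bijective, and the open mapping theorem supplies the bounded inverse, proving \eqref{DOP1-1}. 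For \eqref{DOP1-2} I would repeat the identical splitting over $L^{2}(\Omega)\times\bs{H}^{1}(\Omega)\times\bs{H}_{\mathbb{N}}^{-1/2}(\partial\Omega)$: every continuity, compactness and ellipticity input used above holds verbatim in these spaces, so the same Fredholm-plus-injectivity scheme applies.
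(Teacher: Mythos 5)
Your strategy coincides with the paper's: the same splitting (your $\bs{\mathcal{D}}^{1}_{0}$ is the paper's $\widetilde{\bs{\mathcal{D}}}^{1}$), the same compact remainder via Theorem \ref{ch2thRcomp}, injectivity via the Equivalence Theorem, and a Fredholm index-zero ending. The genuine gap is exactly the step you single out as ``the main obstacle'' and then grant yourself: bijectivity of $\bs{\mathcal{V}}:\bs{H}_{\mathbb{N}}^{-1/2}(\partial \Omega)\to\bs{H}^{1/2}(\partial \Omega)$. Subspace ellipticity (Theorem \ref{singinvV}) does \emph{not} ``provide the matching surjectivity onto the full target'': by Lax--Milgram it yields an isomorphism of $\bs{H}_{\mathbb{N}}^{-1/2}(\partial \Omega)$ onto the \emph{dual} $\bigl(\bs{H}_{\mathbb{N}}^{-1/2}(\partial \Omega)\bigr)'$, which is the quotient $\bs{H}^{1/2}(\partial \Omega)/\mathrm{span}\{\bs{n}\}$; i.e.\ it solves $\bs{\mathcal{V}}\pmb{\psi}=\pmb{\phi}$ only up to a multiple of $\bs{n}$. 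In fact surjectivity onto all of $\bs{H}^{1/2}(\partial \Omega)$ fails: the kernel of $\mathring{\bs{\mathcal{V}}}$ is symmetric in $(\bs{x},\bs{y})$ and the operator is Fredholm of index zero with $\ker\mathring{\bs{\mathcal{V}}}=\mathrm{span}\{\bs{n}\}$, so its range is the codimension-one subspace $\{\pmb{\phi}\in\bs{H}^{1/2}(\partial \Omega):\langle\bs{n},\pmb{\phi}\rangle_{S}=0\}$; and since $\bs{H}^{-1/2}(\partial \Omega)=\bs{H}_{\mathbb{N}}^{-1/2}(\partial \Omega)\oplus\mathrm{span}\{\bs{n}\}$ with $\bs{n}$ mapped to zero, restricting the domain to $\bs{H}_{\mathbb{N}}^{-1/2}(\partial \Omega)$ does not enlarge the range. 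By relation \eqref{ch2relationcalVW} the same holds for $\bs{\mathcal{V}}=\mu^{-1}\mathring{\bs{\mathcal{V}}}$ (with the weight $\mu$ in the orthogonality condition). Hence the restricted $\bs{\mathcal{V}}$ is injective with index $-1$, your triangular $\bs{\mathcal{D}}^{1}_{0}$ then has index $-1$, the compact perturbation preserves this index, and the closing move ``index zero plus injective implies invertible'' is unavailable; an injective operator of index $-1$ is never surjective.

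In fairness, this is not a defect you introduced: the paper's own proof makes the identical leap, deducing invertibility of $\widetilde{\bs{\mathcal{D}}}^{1}$ from the fact that $\widetilde{\bs{\mathcal{D}}}^{1}\bs{\mathcal{X}}=\bs{0}$ has only the trivial solution, which in infinite dimensions proves only injectivity; and Theorem \ref{singinvV} as stated asserts ellipticity, not invertibility onto $\bs{H}^{1/2}(\partial \Omega)$. So your proposal reproduces the paper's argument faithfully, including its unproved step --- but as a self-contained proof it does not close, and the index count above indicates the obstruction is real rather than technical: the range of $\bs{\mathcal{D}}^{1}$ on the stated domain misses a one-dimensional complement. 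A genuine repair must deal with this cokernel, e.g.\ by characterising the range through one scalar compatibility condition on the right-hand side, or by passing to the augmented formulation of \cite{reidinger}, where one solves $\bs{\mathcal{V}}\pmb{\psi}+c\,\mu^{-1}\bs{n}=\pmb{\phi}$ for the pair $(\pmb{\psi},c)\in\bs{H}_{\mathbb{N}}^{-1/2}(\partial \Omega)\times\mathbb{R}$ --- that augmented map is the isomorphism which subspace ellipticity actually delivers.
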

\begin{proof}
We shall proceed by applying the usual arguments of Fredholm Theory.  
Let us consider the operator \[\widetilde{\bs{\mathcal{D}}}^{1}:=
	\begin{bmatrix}
	I & 0 & -\Pi^{s}  \\
	\textbf{0} & \textbf{I} & - \textbf{V} \\
	\textbf{0}& \textbf{0} &- \bs{\mathcal{V}} 
	\end{bmatrix}.\]

The operator $ \widetilde{\bs{\mathcal{D}}}^{1}: \bs{H}^{1,0}(\Omega ;\bs{\mathcal{A}})\times \bs{H}_{\mathbb{N}}^{- \frac{1}{2}}(\partial \Omega) \longrightarrow \bs{H}^{1,0}(\Omega ;\bs{\mathcal{A}})\times  \bs{H}^{\frac{1}{2}}(\partial \Omega)$ is bounded due to the mapping properties of the operators involved given in Theorem \ref{ch2T3} and Theorem \ref{ch2TP3}. 

Let us show now that the operator $\widetilde{\bs{\mathcal{D}}}^{1}$ is invertible. Consider the system $\widetilde{\bs{\mathcal{D}}}^{1}\mathcal{X}=0$. This system reads
\begin{align}
p-\Pi^{s}\bs{\psi}&=0,\,\,\textnormal{in}\,\, \Omega\label{d1}\\
\bs{v}-\bs{V}\bs{\psi}&=\bs{0},\,\,\textnormal{in}\,\, \Omega\label{d2}\\
-\bs{\mathcal{V}}\bs{\psi}&=\bs{0},\,\,\textnormal{on}\,\, S.\label{d3}
\end{align}
By virtue of the invertibility of the operator $\bs{\mathcal{V}}$, see Theorem \ref{singinvV}, the only solution of \eqref{d3} is $\bs{\psi}=\bs{0}$, what gives $(p,\bs{v})=(0, \bs{0})$. Hence, the system the system $\widetilde{\bs{\mathcal{D}}}^{1}\mathcal{X}=0$ has only the trivial solution and the operator $\widetilde{\bs{\mathcal{D}}}^{1}$ is invertible. 

	Furthermore, let us show the injectivity of the operator \ref{DOP1-1}. To see this, let $\bs{\mathcal{D}}^{1}\bs{\mathcal{X}} = \textbf{0}$. Then  $[F_{0},\textbf{F}, ~ \gamma ^{+} \textbf{F}-\pmb{\varphi}_{0}]^{T}=\textbf{0}$  by Remark \ref{rem2.10}, which implies $( \textbf{f},g,\pmb{\varphi}_{0}) = \textbf{0}$. This means  $\bs{\mathcal{A}}(p,\bs{v}) = \textbf{0},$ $\div \bs{v} = 0,$ $\pmb{\varphi}_{0} =\gamma^{+}\bs{v}= \textbf{0}.$ Hence by Theorem \ref{ch2BVPUS} and Theorem~\ref{DEthm}(ii), $(p, \bs{v} )=(0,\textbf{0})$ and $\pmb{\psi}= \textbf{0}$ and this is the only possible solution.  Therefore, $\bs{\mathcal{X}} = \textbf{0}.$
	
By virtue of Theorem \ref{ch2thRcomp} the operator $\bs{\mathcal{D}}^{1}-\widetilde{\bs{\mathcal{D}}}^{1}:\bs{H}^{1,0}(\Omega ;\bs{\mathcal{A}})\times \bs{H}_{\mathbb{N}}^{- \frac{1}{2}}(\partial \Omega) \longrightarrow \bs{H}^{1,0}(\Omega ;\bs{\mathcal{A}})\times  \bs{H}^{\frac{1}{2}}(\partial \Omega)$ which is
	\[\bs{\mathcal{D}}^{1}- \widetilde{\bs{\mathcal{D}}}^{1}=
	\begin{bmatrix}
	0 & \mathcal{R} ^{\bullet} & 0  \\
	\textbf{0} & \bs{\mathcal{R}} & \textbf{0} \\
	\textbf{0}& \gamma ^{+}\bs{\mathcal{R}} &\textbf{0} 
	\end{bmatrix}\]
	is compact, implying that operator \eqref{DOP1-1} is Fredholm operator with zero index, see e.g. \cite[Theorem 2.27]{mclean}. Therefore, the injectivity of operator \eqref{DOP1-1} implies its invertibility.
	
Let us now show the invertibility of \eqref{DOP1-2}. Taking into account that $(p,\bs{v}) \in \bs{H}^{1,0}(\Omega ;\bs{\mathcal{A}})$, then, clearly $L^{2}(\Omega)\times \bs{H}^{1}(\Omega)$. Similarly, if $\bs{\mathcal{F}}^{1}  \in \bs{H}^{1,0}(\Omega ;\bs{\mathcal{A}}) \times \bs{H}^{ \frac{1}{2}}(\partial \Omega)$, then, $(F_{0},\bs{F})\in L^{2}(\Omega)\times \bs{H}^{1}(\Omega)$.

On the other hand, $(p,\bs{v}, \bs{\psi}) :=(\bs{\mathcal{D}}^{1} )^{-1}\bs{\mathcal{F}}^{1} \in L^{2}(\Omega)\times \bs{H}^{1}(\Omega) \times \bs{H}_{\mathbb{N}}^{- \frac{1}{2}}(\partial \Omega)$ where $(\bs{\mathcal{D}}^{1})^{-1}$ is the inverse operator of \eqref{DOP1-1}. Consequently, $(\bs{\mathcal{D}}^{1})^{-1}$ is also the inverse operator of \eqref{DOP1-2}, what completes the proof. 
\end{proof}

\begin{theorem}\label{DOpthm2}
	The operators
	\begin{eqnarray}
	\bs{\mathcal{D}}^{2}&:&L^{2}(\Omega)\times \bs{H}^{1}(\Omega)\times \bs{H}_{\mathbb{N}}^{- \frac{1}{2}}(\partial \Omega)  \longrightarrow L^{2}(\Omega)\times \bs{H}^{1}(\Omega)\times \bs{H}^{\frac{1}{2}}(\partial \Omega), \label{DOp2-1}\\
	\bs{\mathcal{D}}^{2}&:&\bs{H}^{1,0}(\Omega ;\bs{\mathcal{A}})\times \bs{H}_{\mathbb{N}}^{- \frac{1}{2}}(\partial \Omega) \longrightarrow \bs{H}^{1,0}(\Omega ;\bs{\mathcal{A}})\times  \bs{H}^{\frac{1}{2}}(\partial \Omega),\label{DOp2-2}
	\end{eqnarray}
	are bounded and invertible
\end{theorem}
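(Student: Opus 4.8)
The plan is to follow the Fredholm-theoretic scheme used for Theorem \ref{DOpthm1}, adapted to the traction form of the third equation. First I would record that $\bs{\mathcal{D}}^{2}$ is bounded between the indicated spaces: this is immediate from the mapping properties of $\Pi^{s}$ (Theorem \ref{ch2TP3}), of $\bs{V}$ and $\bs{\mathcal{W}}'$ (Theorem \ref{ch2T3}), of the remainder operators $\mathcal{R}^{\bullet},\bs{\mathcal{R}},\bs{T}^{+}(\mathcal{R}^{\bullet},\bs{\mathcal{R}})$ (Theorem \ref{ch2thRcomp}), and of $\bs{T}^{+}$ applied to the potentials forming the right-hand side (Theorem \ref{ch2H10maps}). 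As for $\bs{\mathcal{D}}^{1}$, I would then peel off the remainder part by introducing the frozen operator
\[
\widetilde{\bs{\mathcal{D}}}^{2}=
\begin{bmatrix}
I & 0 & -\Pi^{s}\\
\bs{0} & \bs{I} & -\bs{V}\\
\bs{0} & \bs{0} & \frac{1}{2}I-\bs{\mathcal{W}}'
\end{bmatrix},
\]
so that
\[
\bs{\mathcal{D}}^{2}-\widetilde{\bs{\mathcal{D}}}^{2}=
\begin{bmatrix}
0 & \mathcal{R}^{\bullet} & 0\\
\bs{0} & \bs{\mathcal{R}} & \bs{0}\\
\bs{0} & \bs{T}^{+}(\mathcal{R}^{\bullet},\bs{\mathcal{R}}) & \bs{0}
\end{bmatrix}
\]
is compact by Theorem \ref{ch2thRcomp}. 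Hence $\bs{\mathcal{D}}^{2}$ is a compact perturbation of $\widetilde{\bs{\mathcal{D}}}^{2}$, and it suffices to prove that $\widetilde{\bs{\mathcal{D}}}^{2}$ is Fredholm of index zero and that $\bs{\mathcal{D}}^{2}$ is injective.

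Injectivity I would obtain exactly as for $\bs{\mathcal{D}}^{1}$. If $\bs{\mathcal{D}}^{2}\bs{\mathcal{X}}=\bs{0}$, then the right-hand side $\bs{\mathcal{F}}^{2}$ vanishes, so Lemma \ref{rem2.12} forces the data $(\bs{f},g,\bs{\varphi}_{0})$ to be zero; the Equivalence Theorem \ref{DEthm}(iii) then gives that $(p,\bs{v})$ solves the homogeneous Dirichlet BVP and that $\bs{\psi}=\bs{T}^{+}(p,\bs{v})$. By the uniqueness Theorem \ref{ch2BVPUS} we get $\bs{v}=\bs{0}$ and $p$ constant, and since $\bs{\psi}\in\bs{H}_{\mathbb{N}}^{-1/2}(\partial\Omega)$, Remark \ref{remN} pins that constant to zero. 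Therefore $\bs{\mathcal{X}}=\bs{0}$.

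The main obstacle is the Fredholm/invertibility property of $\widetilde{\bs{\mathcal{D}}}^{2}$. Because of its block-triangular structure with identity diagonal in the first two slots, this reduces to the corner block $\frac{1}{2}I-\bs{\mathcal{W}}'$ on $\bs{H}_{\mathbb{N}}^{-1/2}(\partial\Omega)$ -- the analogue, for (D2), of the $\bs{\mathcal{V}}$-ellipticity (Theorem \ref{singinvV}) that made $\widetilde{\bs{\mathcal{D}}}^{1}$ invertible, but genuinely more delicate since $\frac{1}{2}I-\bs{\mathcal{W}}'$ is not elliptic. I would argue in two steps. For the Fredholm property I would use that $\bs{\mathcal{W}}'$ gains one order of regularity (Theorem \ref{ch2T3}), so that composed with the compact Rellich embedding $\bs{H}^{1/2}(\partial\Omega)\hookrightarrow\bs{H}^{-1/2}(\partial\Omega)$ it is compact; hence $\frac{1}{2}I-\bs{\mathcal{W}}'$ is a compact perturbation of a multiple of the identity. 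For injectivity I would use the jump relation of Theorem \ref{ch2jumps}, which yields $(\frac{1}{2}I-\bs{\mathcal{W}}')\bs{\rho}=-\bs{T}^{-}(\Pi^{s}\bs{\rho},\bs{V}\bs{\rho})$: a vanishing left-hand side makes the single-layer pair $(\Pi^{s}\bs{\rho},\bs{V}\bs{\rho})$ a null-traction exterior solution, and continuity of the single-layer velocity ($\gamma^{\pm}\bs{V}=\bs{\mathcal{V}}$) together with exterior uniqueness reduces the matter to $\bs{\mathcal{V}}\bs{\rho}=\bs{0}$, forcing $\bs{\rho}=\bs{0}$ by Theorem \ref{singinvV}, in the same spirit as Theorem \ref{ch2lemma2}. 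Combining ``injective'' with the index-zero property then gives invertibility of $\widetilde{\bs{\mathcal{D}}}^{2}$, hence of $\bs{\mathcal{D}}^{2}$.

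Finally, I would transfer the conclusion between the two functional settings \eqref{DOp2-1} and \eqref{DOp2-2} exactly as in Theorem \ref{DOpthm1}: a solution produced in one scale automatically lies in the other, since $(p,\bs{v})\in\bs{H}^{1,0}(\Omega;\bs{\mathcal{A}})$ trivially implies $(p,\bs{v})\in L^{2}(\Omega)\times\bs{H}^{1}(\Omega)$, while Theorem \ref{ch2H10maps} guarantees that the inverse returns an $\bs{H}^{1,0}$-pair; thus the same bounded inverse serves both mappings. I expect the real difficulty to be concentrated entirely in the corner analysis of $\frac{1}{2}I-\bs{\mathcal{W}}'$ on the constrained space $\bs{H}_{\mathbb{N}}^{-1/2}(\partial\Omega)$, and in particular in the careful bookkeeping of the one-dimensional hydrostatic (normal) mode that the constraint $\bs{\psi}\in\bs{H}_{\mathbb{N}}^{-1/2}$ is designed to remove; once Theorem \ref{DOpthm1} is available, the remaining steps are routine.
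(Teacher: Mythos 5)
Your overall scheme coincides with the paper's: boundedness from the mapping theorems, a block-triangular ``frozen'' operator, compactness of the difference, injectivity of $\bs{\mathcal{D}}^{2}$ from Lemma \ref{rem2.12} together with Theorem \ref{DEthm}(iii), Theorem \ref{ch2BVPUS} and Remark \ref{remN}, and the transfer between the settings \eqref{DOp2-1} and \eqref{DOp2-2} via Theorem \ref{ch2L1}/Theorem \ref{ch2H10maps}. The genuine difference is the corner block. The paper freezes to the \emph{constant-coefficient} operator $\frac{1}{2}\bs{I}-\mathring{\bs{\mathcal{W}}}'$, so its difference $\bs{\mathcal{D}}^{2}-\widetilde{\bs{\mathcal{D}}}^{2}$ contains $-(\bs{\mathcal{W}}'-\mathring{\bs{\mathcal{W}}}')$, whose compactness is exactly Theorem \ref{compW}, and it then identifies the kernel of $\frac{1}{2}\bs{I}-\mathring{\bs{\mathcal{W}}}'$ as $\textnormal{span}\{\bs{n}\}$ by the classical result of Ladyzhenskaya, a kernel that the constraint space $\bs{H}_{\mathbb{N}}^{-1/2}(\partial\Omega)$ removes. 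You instead keep the variable-coefficient $\bs{\mathcal{W}}'$ in the frozen corner, so your difference is compact by Theorem \ref{ch2thRcomp} alone, and you obtain the index-zero property from the compactness of $\bs{\mathcal{W}}'$ itself on $\bs{H}^{-1/2}(\partial\Omega)$ (Theorem \ref{ch2T3} plus the Rellich embedding --- the very argument the paper uses to prove Theorem \ref{compW}). That variant is legitimate and in fact leaner: it dispenses with Theorem \ref{compW} and with the classical kernel characterization, because, as you correctly state at the outset, all that is needed is ``frozen operator Fredholm of index zero'' plus ``$\bs{\mathcal{D}}^{2}$ injective''.

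There is, however, one genuinely flawed step in your write-up: the injectivity argument for $\frac{1}{2}I-\bs{\mathcal{W}}'$ via exterior uniqueness. You treat $(\Pi^{s}\bs{\rho},\bs{V}\bs{\rho})$ as a null-traction solution of the homogeneous system in $\Omega^{-}$. For the \emph{variable-coefficient} operator this fails twice over: the parametrix-based potentials are not homogeneous solutions of $\bs{\mathcal{A}}(p,\bs{v})=\bs{0}$ --- that is the whole point of a parametrix, cf.\ \eqref{ch2param}; Theorem \ref{ch2H10maps} only asserts $\bs{\mathcal{A}}(\Pi^{s}\bs{\rho},\bs{V}\bs{\rho})\in\bs{L}^{2}(\Omega)$, not that it vanishes --- and the exterior operator $\bs{\mathcal{A}}$ is not even defined, since $\mu$ is only given on $\Omega$. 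This is precisely why the paper freezes the corner to $\mathring{\bs{\mathcal{W}}}'$: for $\mu\equiv 1$ the single-layer pair is a genuine homogeneous Stokes solution and the exterior/kernel argument is classical. Fortunately, this step is superfluous in your own plan: Fredholmness of index zero of the frozen operator, compactness of the difference, and injectivity of $\bs{\mathcal{D}}^{2}$ already yield invertibility. On that point, note that your closing sentence inverts the logic: invertibility of $\widetilde{\bs{\mathcal{D}}}^{2}$ does not by itself give invertibility of $\bs{\mathcal{D}}^{2}$; it is the injectivity of $\bs{\mathcal{D}}^{2}$, combined with the index-zero property inherited under compact perturbation, that closes the argument --- exactly as in Theorem \ref{DOpthm1}. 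Delete the exterior-uniqueness paragraph, keep the rest, and your proof stands.
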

\begin{proof}
The boundedness of the operators follows from the mapping properties of the operators involved in \eqref{DOp2-1}-\eqref{DOp2-2}, given by Theorem \ref{ch2T3} and Theorem \ref{ch2TP3}.

Following a similar argument as for the previous theorem, we consider the operator 
\[\widetilde{\bs{\mathcal{D}}}^{2}=
	\begin{bmatrix}
	I & 0 & -\Pi^{s}  \\
	\textbf{0} & \textbf{I} & - \textbf{V} \\
	\textbf{0}& \textbf{0} &\frac{1}{2}\textbf{I}-\bs{\mathring{\mathcal{W}}^{'}} 
	\end{bmatrix},\]
which is also continuous due to Theorem \ref{ch2T3} and Theorem \ref{ch2TP3}. Let us show that the operator $\widetilde{\bs{\mathcal{D}}}^{2}$ is invertible by considering the system $\widetilde{\bs{\mathcal{D}}}^{2}\mathcal{X}=0$. This system reads 
\begin{align}
p-\Pi^{s}\bs{\psi}&=0,\,\,\textnormal{in}\,\, \Omega\label{e1}\\
\bs{v}-\bs{V}\bs{\psi}&=0,\,\,\textnormal{in}\,\, \Omega\label{e2}\\
\frac{1}{2}\bs{\psi}-\bs{\mathring{\mathcal{W}}^{'}} \bs{\psi}&=0,\,\,\textnormal{on}\,\, S.\label{e3}
\end{align}
The equation \eqref{e3} has as a general solution $\bs{\psi}:=c\bs{n}^{*}$, see \cite[Chapter 3, Section 3, Theorem 1]{ladynes}. Consequently, following a similar argument as in the previous Theorem, the operator $\widetilde{\bs{\mathcal{D}}}^{2}$ is invertible. 

	Moreover, Theorem \ref{DEthm}(iii) implies that the operators \ref{DOp2-1} and \ref{DOp2-2} are injective. To see this, let $\bs{\mathcal{D}}^{2}\bs{\mathcal{X}} = \textbf{0}$, then $\bs{\mathcal{F}}^{2} = \textbf{0}$, or $[F_{0},\textbf{F}, ~\textbf{T}^{+}( F_{0},\textbf{F})]^{T}=\textbf{0}$  by Remark \ref{rem2.12}, which implies $( \textbf{f},g, \pmb{\varphi}_{0}) = \textbf{0}$. This means  $\bs{\mathcal{A}}(p,\bs{v}) = \textbf{0},$ $\div \bs{v}=0, $ $\pmb{\varphi}_{0} = \textbf{0}$, hence by Theorem \ref{DEthm}(iii), $p=0, \bs{v} = \textbf{0}, \pmb{\psi}= \textbf{0}$. Therefore, $\bs{\mathcal{X}} = \textbf{0}.$
	
	 Due to Theorem \ref{ch2thRcomp}, Theorem \ref{compW} and Theorem \ref{ch2T3}, the operator
	\[\bs{\mathcal{D}}^{2}- \widetilde{\bs{\mathcal{D}}}^{2}=
	\begin{bmatrix}
	0 & \mathcal{R} ^{\bullet} & 0  \\
	0 & \bs{\mathcal{R}} & 0 \\
	0& \textbf{T}^{+}(\mathcal{R}^{\bullet}, \bs{\mathcal{R}}) &-(\bs{\mathcal{W}}^{'}-\bs{\mathring{\mathcal{W}}^{'}} )
	\end{bmatrix}\]
	is compact, implying that operator \eqref{DOp2-1} is Fredholm operator with zero index (see, \cite{mclean},Theorem 2.27). Hence, the injectivity of operator \eqref{DOp2-1} implies its invertibility.
	
	To prove the invertibility of the operator \eqref{DOp2-2}, consider the solution $\bs{\mathcal{X}}=(\bs{\mathcal{D}}^{2})^{-1}\bs{\mathcal{F}}^{2}$ . Here $\bs{\mathcal{F}}^{2}\in \bs{H}^{1,0}(\Omega ;\bs{\mathcal{A}})\times  \bs{H}^{\frac{1}{2}}(\partial \Omega)$ is an arbitrary right hand side and $(\bs{\mathcal{D}}^{2})^{-1}$ is the inverse of  operator \eqref{DOp2-1} which exists.
	
	Applying Theorem \ref{ch2L1} to the first two equations of the system \eqref{DBDIES2-1} - \eqref{DBDIES2-3}, we get that $\bs{\mathcal{X}}\in \bs{H}^{1,0}(\Omega ;\bs{\mathcal{A}})\times \bs{H}_{\mathbb{N}}^{- \frac{1}{2}}(\partial \Omega)$. Consequently, operator $(\bs{\mathcal{D}}^{2})^{-1}$ is also the continuous inverse of the operator\eqref{DOp2-2}. 
\end{proof}

\begin{theorem} Let $(p,\bs{v}),(p_{0},\bs{v}_{0})\in \bs{H}^{1,0}(\Omega ;\mathcal{A})$ and $\bs{\psi}, \bs{\psi}_{0}\in \bs{H}^{-\frac{1}{2}}(\partial \Omega)$.
\begin{enumerate}
    \item[(i)] The BDIE systems (D1) and (D2) admit only one linearly independent solution $$(p_{0},\bs{v}_{0}, \bs{\psi}_{0}):= (C,\bs{0},C\bs{n}),\quad C\in\mathbb{R},$$ where the pair $(p_{0},\bs{v}_{0})$ is the solution of the homogeneous boundary value problem 
\begin{align}
\bs{\mathcal{A}}(p,\bs{v})&=\bs{0}, \quad \textnormal{in}\,\, \Omega,\label{bvph1}\\
\div{(\bs{v})}&= 0, \quad \textnormal{in}\,\, \Omega,\label{bvph2}\\
\bs{\gamma}^{+}\bs{v}&=\bs{0}, \quad \textnormal{on}\,\, S,\label{bvph3}\end{align}
and
\begin{equation}\label{psi0}
\bs{\psi}_{0}:=\bs{T}^{+}(p_{0},\bs{v}_{0})\in \bs{H}^{-\frac{1}{2}}(\partial \Omega).
\end{equation}
 
    \item[(ii)] The non-homogeneous BDIE systems (D1) and (D2) are solvable and any of their solutions $\bs{\mathcal{X}}$ can be represented as 
    
    $$\bs{\mathcal{X}}= \begin{bmatrix} p \\ \bs{v} \\ \bs{\psi}
    \end{bmatrix} + C \begin{bmatrix} 1 \\ \bs{0} \\ \bs{n}
    \end{bmatrix},\quad C\in\mathbb{R}, $$ 
where $(p,\bs{v})$ solves the BVP \eqref{ch2BVP1}-\eqref{ch2BVPD} and $\bs{\psi}:=\bs{T}^{+}(p,\bs{v})$.  

\end{enumerate}
\end{theorem}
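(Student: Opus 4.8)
The plan is to reduce everything to the equivalence Theorem~\ref{DEthm}, the uniqueness Theorem~\ref{ch2BVPUS}, and the invertibility Theorems~\ref{DOpthm1}--\ref{DOpthm2}, so that no new mapping property is required; the whole argument is bookkeeping around these results together with the identification of the kernels.

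For item (i) I would first determine the kernels. A homogeneous system $\bs{\mathcal{D}}^{1}\bs{\mathcal{X}}=\bs{0}$ (resp. $\bs{\mathcal{D}}^{2}\bs{\mathcal{X}}=\bs{0}$) means $\bs{\mathcal{F}}^{1}=\bs{0}$ (resp. $\bs{\mathcal{F}}^{2}=\bs{0}$), which by Lemma~\ref{rem2.10} (resp. Lemma~\ref{rem2.12}) corresponds precisely to the zero data $(\bs{f},g,\bs{\varphi}_{0})=(\bs{0},0,\bs{0})$. Hence Theorem~\ref{DEthm}(ii) (resp. (iii)) applies: any solution $(p,\bs{v},\bs{\psi})$ has $(p,\bs{v})$ solving the homogeneous BVP \eqref{bvph1}--\eqref{bvph3} and $\bs{\psi}=\bs{T}^{+}(p,\bs{v})$. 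By Theorem~\ref{ch2BVPUS}, in $\bs{H}^{1,0}(\Omega;\bs{\mathcal{A}})$ this forces $\bs{v}\equiv\bs{0}$ and $p\equiv C$ constant, so that $\bs{\psi}=\bs{T}^{+}(C,\bs{0})=C\bs{n}$, as recorded in Remark~\ref{remN}. Conversely, since $(C,\bs{0})$ solves the homogeneous BVP, Theorem~\ref{DEthm}(i) shows that $(C,\bs{0},C\bs{n})$ indeed solves both homogeneous systems. Thus the kernel of each matrix operator is exactly the one-dimensional span of $(1,\bs{0},\bs{n})^{\top}$, which is the assertion of (i).

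For the solvability in (ii) I would invoke the invertibility of the reduced operators \eqref{DOP1-1} and \eqref{DOp2-2}. Given data $(\bs{f},g,\bs{\varphi}_{0})$, the corresponding right-hand side $\bs{\mathcal{F}}^{1}$ (resp. $\bs{\mathcal{F}}^{2}$) lies in $\bs{H}^{1,0}(\Omega;\bs{\mathcal{A}})\times\bs{H}^{1/2}(\partial\Omega)$, so Theorem~\ref{DOpthm1} (resp. Theorem~\ref{DOpthm2}) yields a unique preimage $(p,\bs{v},\bs{\psi})$ with $\bs{\psi}\in\bs{H}_{\mathbb{N}}^{-1/2}(\partial\Omega)$. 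By Theorem~\ref{DEthm}(ii)/(iii) this particular solution satisfies that $(p,\bs{v})$ solves the BVP \eqref{ch2BVP1}--\eqref{ch2BVPD} and $\bs{\psi}=\bs{T}^{+}(p,\bs{v})$, which both proves solvability and exhibits one explicit solution.

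Finally, the representation in (ii) follows from linearity: if $\bs{\mathcal{X}}$ is any solution in $\bs{H}^{1,0}(\Omega;\bs{\mathcal{A}})\times\bs{H}^{-1/2}(\partial\Omega)$ and $(p,\bs{v},\bs{\psi})^{\top}$ is the particular solution just built, then their difference lies in the kernel of the matrix operator, which by item (i) equals $\{C(1,\bs{0},\bs{n})^{\top}:C\in\mathbb{R}\}$; this gives the stated decomposition. The point needing care is the bookkeeping between the full space $\bs{H}^{-1/2}(\partial\Omega)$, where the kernel is non-trivial, and the reduced space $\bs{H}_{\mathbb{N}}^{-1/2}(\partial\Omega)$, where Theorems~\ref{DOpthm1}--\ref{DOpthm2} grant uniqueness. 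I expect this interplay to be the only genuine obstacle, since one must observe that $\bs{n}\notin\bs{H}_{\mathbb{N}}^{-1/2}(\partial\Omega)$ (because $\langle\bs{n},\bs{n}\rangle_{S}\neq0$), so that adding $C\bs{n}$ with $C\neq0$ leaves the reduced space; this is exactly what reconciles the one-parameter family of (ii) with the uniqueness coming from the reduced invertibility.
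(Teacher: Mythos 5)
Your proposal is correct, and it reaches the statement by a route that differs from the paper's in both items. For item (i), the paper works by hand: it first checks that $(C,\bs{0},C\bs{n})$ solves the homogeneous systems by applying the third Green identities \eqref{ch2GP}--\eqref{ch2GV} together with their trace \eqref{ch2GG} or traction \eqref{ch2GT} with zero data, and then, having fixed $(p_{0},\bs{v}_{0})=(C,\bs{0})$, it determines $\bs{\psi}_{0}$ by solving the residual boundary integral equations, invoking kernel results for the single-layer trace and for $\frac{1}{2}\bs{I}-\bs{\mathcal{W}}'$ quoted from Ladyzhenskaya, Reidinger and Hsiao, plus the computation $\Pi^{s}(C\bs{n})=C$ from the proof of Theorem \ref{ch2lemma2}. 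You obtain the same kernel purely from Theorem \ref{DEthm} run in both directions (items (ii)/(iii) combined with the uniqueness Theorem \ref{ch2BVPUS} to place any homogeneous solution inside the span, item (i) for the converse inclusion), so that $\bs{\psi}_{0}=\bs{T}^{+}(C,\bs{0})=C\bs{n}$ comes for free and no constant-coefficient kernel results are needed. For item (ii) the difference is more consequential: the paper deduces existence from ``the solvability of the BVP, i.e.\ Theorem \ref{ch2BVPUS}'', but that theorem is only a uniqueness statement and existence of a BVP solution is nowhere proven in the paper, so the paper's existence step is, strictly speaking, dangling. Your argument gets existence instead from the invertibility Theorems \ref{DOpthm1} and \ref{DOpthm2} (already established by Fredholm arguments), producing a particular solution with $\bs{\psi}\in\bs{H}^{-1/2}_{\mathbb{N}}(\partial\Omega)$ that Theorem \ref{DEthm}(ii)/(iii) converts into a BVP solution; this buys a self-contained existence proof and in fact repairs the paper's weak point. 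The concluding linearity argument is the same in both proofs, and your closing observation that $\bs{n}\notin\bs{H}^{-1/2}_{\mathbb{N}}(\partial\Omega)$ because $\langle\bs{n},\bs{n}\rangle_{S}\neq 0$ is precisely the reconciliation between the one-parameter solution family and the reduced-space uniqueness that the paper records in Remark \ref{remN}.
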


\begin{proof}
From Theorem \ref{ch2BVPUS}, we know that the problem \eqref{bvph1}-\eqref{bvph3} is uniquely solvable for $\bs{v}\in \bs{H}^{1}_{0,\div}(\Omega)\subset \bs{H}^{1}(\Omega)$ and up to a constant for $p\in L_{2}(\Omega)$. Hence, the solution of the problem \eqref{bvph1}-\eqref{bvph3} can be written in the form $(p_{0},\bs{v}_{0}):=(C, \bs{0})$ for some constant $C\in \mathbb{R}$. 

Since the pair $(C, \bs{0})$ satisfies \eqref{bvph1}, then $(C, \bs{0})\in \bs{H}^{1,0}(\Omega ;\bs{\mathcal{A}})$. Therefore, we can correctly define $\bs{\psi}_{0}:= \bs{T}^{+}(C, \bs{0}) =C\bs{n}.$

On the one hand, the solvability of the homogeneous systems (D1) and (D2) follows from the Equivalence Theorem (Theorem \ref{DEthm}) and the solvability of the BVP (Theorem \ref{ch2BVPUS}). On the other hand, Lemma \ref{rem2.10} states that if the right hand side of the system (D1) or (D2) vanishes, then, $(\bs{f},g, \bs{\varphi}_{0})=\bs{0}$. The same applies for the system (D2) as a result of the  Lemma \ref{rem2.12}.
Then, by applying the third Green identity to $(p_{0},\bs{v}_{0},\bs{\psi}_{0})$ with $(\bs{f},g, \bs{\varphi}_{0})=\bs{0}$, we obtain
\begin{subequations}
	\begin{align}
	p_{0}+\mathcal{R}^{\bullet}\bs{v}_{0} -\Pi^{s}\pmb{\psi}_{0} &= 0 ,~~ {\rm in } ~\Omega, \label{bd1}\\
	\bs{v}_{0}+\mathcal{R}\bs{v}_{0} -\textbf{V}\pmb{\psi}_{0} &= \bs{0}, ~~ {\rm in } ~\Omega.  \label{bd2}
	\end{align}
\end{subequations}
Taking the trace of \eqref{bd2} and considering that $\bs{\gamma}^{+}\bs{v}_{0}=\bs{0}$, we arrive at 
\begin{eqnarray}
\gamma ^{+}\bs{\mathcal{R}}\bs{v}_{0}-\bs{\mathcal{V}}\pmb{\psi}_{0}&= \bs{0} ~~~ {\rm on } ~\partial \Omega.\label{bd3}
\end{eqnarray}
Then, \eqref{bd1}-\eqref{bd3} show that the triple $(p_{0},\bs{v}_{0},\bs{\psi}_{0})$ solves the homogeneous system (D1). 

Analogously, for system (D2), we take the traction of \eqref{bd1}-\eqref{bd2} instead 
\begin{eqnarray}
	\dfrac{1}{2}\pmb{\psi}_{0}+ \textbf{T}^{+}(\mathcal{R}^{\bullet}, \bs{\mathcal{R}})\bs{v}_{0}-\bs{\mathcal{W}}^{'}\pmb{\psi}_{0}  &=  \bs{0}~~{\rm on } ~\partial\Omega. \label{bd4}
\end{eqnarray}
Then, \eqref{bd1},\eqref{bd2} and \eqref{bd4} show that the triple $(p_{0},\bs{v}_{0},\bs{\psi}_{0})$ also solves the homogeneous system (D2). 
We have thus proved that $(p_{0},\bs{v}_{0},\bs{\psi}_{0})$ solves both systems (D1) and (D2). 

Let us now show that this is the only independent solution. For this purpose, we study the solutions for $\bs{\psi}_{0}$ of the homogeneous systems (D1) and (D2) when $(p_{0},\bs{v}_{0}):=(C, \bs{0})$. Replacing this pair into the system (D1), we obtain
	\begin{align}
	C-\Pi^{s}\pmb{\psi}_{0} &= 0 ,~~ {\rm in } ~\Omega, \label{d11}\\
	-\textbf{V}\pmb{\psi}_{0} &= \bs{0}, ~~ {\rm in } ~\Omega,  \label{d12}\\
	-\bs{\mathcal{V}}\pmb{\psi}_{0} &= \bs{0}~~~ {\rm on } ~\partial \Omega.\label{d13}
	\end{align}
We note that all the solutions of \eqref{d12}-\eqref{d13} have the form $\pmb{\psi}_{0}=C\bs{n}$, see \cite[Theorem 1, Chapter 3, Section 3]{ladynes}, \cite[Proposition 2.2]{reidinger} or \cite[Theorem 2.3.2]{hsiao}, and these solutions satisfy \eqref{d11}, recall the proof of Theorem \ref{ch2lemma2} for the fine details.

For (D2)
	\begin{align}
	C-\Pi^{s}\pmb{\psi}_{0} &= 0 ,~~ {\rm in } ~\Omega, \label{d21}\\
	-\textbf{V}\pmb{\psi}_{0} &= \bs{0}, ~~ {\rm in } ~\Omega,  \label{d22}\\
	\dfrac{1}{2}\pmb{\psi}_{0}-\bs{\mathcal{W}}^{'}\pmb{\psi}_{0}  &=  \bs{0}~~{\rm on } ~\partial\Omega. \label{d23}
	\end{align}
Similarly, all the solutions of \eqref{d23} have the form $\pmb{\psi}_{0}=C\bs{n}$, see \cite[Theorem 2.3.2]{hsiao} or \cite[Theorem 1, Chapter 3, Section 3]{ladynes} and these solutions satisfy \eqref{d21}-\eqref{d22}, recall the proof of Theorem \ref{ch2lemma2} for further details. Hence, the only linearly independent solution of the homogeneous BDIE systems (D1) and (D2) is $(p_{0},\bs{v}_{0},\bs{\psi}_{0})$. 

Let us prove now item (ii). The solvability of the non-homogeneous BDIES (D1) follows from the solvability of the BVP, i.e. Theorem \ref{ch2BVPUS} and the Equivalence Theorem (Theorem \ref{DEthm}). Let $(p_{1},\bs{v}_{1})$ and $(p_{2},\bs{v}_{2})$ be solutions of the non-homogeneous BVP \eqref{ch2BVP1}-\eqref{ch2BVPD}. Then, we know from Theorem \ref{DEthm} that the corresponding triples $\mathcal{X}_{1}=(p_{1},\bs{v}_{1}, \bs{\psi}_{1})$ and $\mathcal{X}_{2}=(p_{2},\bs{v}_{2}, \bs{\psi}_{2})$ are solutions of the non homogeneous BDIES (D1). Consequently, the vectors satisfy  $\mathcal{D}^{1}\mathcal{X}_{1}=\mathcal{F}^{1}$ and $\mathcal{D}^{1}\mathcal{X}_{2}=\mathcal{F}^{1}$. Subtracting both equations, we obtain $\mathcal{D}^{1}(\mathcal{X}_{1}-\mathcal{X}_{2})=\bs{0}$ what implies that $(\mathcal{X}_{1}-\mathcal{X}_{2})$ is a solution of the homogeneous BDIES. From item (i), we know that all the solutions of the BDIES (D1) can be written in the form $$\mathcal{X}_{1}-\mathcal{X}_{2} = (p_{0},\bs{v}_{0},\bs{\psi}_{0})^{\top}.$$
Consequently, all the solutions of the non-homogeneous system (D1) can be expressed as 
$$ \mathcal{X}_{1}= \mathcal{X}_{2} + (p_{0},\bs{v}_{0},\bs{\psi}_{0})^{\top}.$$
The same argument applies for the system (D2). 

\end{proof}

\section{Conclusions}
From the original BVP for the compressible Stokes system with variable viscosity and Dirichlet boundary condition, we have derived two systems of BDIEs. Furthermore, we have analysed the equivalence between the BVP and the BDIE systems taking into account the non-trivial kernel of the single layer potential for velocity. Solvability of the BDIE systems has also been analysed. 

The results shown in these paper can easily be extended to non-simply connected domains following the approach from \cite{reidinger}. Furthermore, the smoothness of the domain can also be relaxed to Lipschitz by defining appropriately the conormal derivative \cite{traces, zenebelips}. 

\section{Declarations}
\textbf{Funding}: Not applicable. \\
\textbf{Conflicts of interest/Competing interests:} Not applicable. \\
\textbf{Availability of data and material (data transparency)}: Not applicable\\
\textbf{Code availability }(software application or custom code)\\

\bibliographystyle{plain}
\renewcommand{\bibname}{\Large References}
\begin{small}

\paragraph{C.Fresneda-Portillo$^1$, M.A. Dagnaw$^2$\vspace{5pt}\\}
\begin{tabular}{ll}
$^1$ & {Department of Quantitative Methods}  \\
&{Universidad Loyola Andalucía} \\ 
&{Campus Sevilla } \\ 
&{41404, Dos Hermanas, Sevilla, Spain.} \\ 
$^2$ & {Department of Mathematics}  \\
&{Debre Tabor University} \\ 
&{Debre Tabor} \\ 
&{Ethiopia.} 
\end{tabular}
\end{small}

\end{document}